\documentclass[reqno,11pt]{amsart}
\usepackage[numbers]{natbib}
\usepackage[french,english]{babel}
\usepackage[utf8]{inputenc}
\usepackage[T1]{fontenc}
\usepackage{lmodern}
\usepackage{amsmath}
\usepackage{xfrac}
\usepackage{amssymb}
\usepackage{comment}
\usepackage{bm}
\usepackage{mathrsfs}
\usepackage{amsthm}
\usepackage{enumerate}
\usepackage{dsfont}
\usepackage[normalem]{ulem}
\usepackage{todonotes}

\usepackage{array}
\usepackage{tabularx}
\usepackage{booktabs}
\usepackage{ragged2e}

\usepackage{eurosym} 
\usepackage{stmaryrd} 

\usepackage{tikz}
\usetikzlibrary{arrows.meta}
\usepackage{graphicx}
\usepackage{multirow} 
\usepackage{float}    
\usepackage{adjustbox}

\usepackage{xr-hyper}
\usepackage{xurl}
\usepackage{hyperref}
\hypersetup{
  colorlinks=true,
  breaklinks=true,
  urlcolor=blue,
  linkcolor=blue,
  citecolor=magenta,
  pdfstartview = FitH,
  bookmarksopen = true
}

\usepackage{listings}
\usepackage{framed}

\usepackage{fullpage} 

\newcommand{\E}{\mathbb{E}}
\newcommand{\PP}{\mathbb{P}}
\newcommand{\Gen}{\mathsf{Gen}}

\makeatletter
\newcommand{\myitem}[2][]{%
  \item[#1]%
  \phantomsection%
  \def\@currentlabel{#1}%
  \label{#2}%
}
\newcommand{\myassitem}[2][]{%
  \item[#1]%
  \phantomsection%
  \def\@currentlabel{#1}%
  \label{#2}%
}
\DeclareRobustCommand{\myref}[1]{%
  \hyperref[#1]{\ref*{#1}}%
}
\makeatother

\newcounter{proofstep}

\DeclareMathOperator{\sgn}{sgn}

\renewcommand{\arraystretch}{1.35}
\title{The Martingale Property in Volterra Volatility Models}

\numberwithin{equation}{section}

\theoremstyle{plain}
\newtheorem{prooff}{Proof}[section]

\addto\captionsfrench{}

\newtheorem{lemma}[prooff]{Lemma}
\newtheorem{cor2}[prooff]{Corollary}

\theoremstyle{definition}
\newtheorem{example}{Example}
\newtheorem{rk}[prooff]{Remark}

\theoremstyle{definition}
\newtheorem{notation}{Notation}
\newtheorem{assumption}{Assumption}

\theoremstyle{plain}
\newtheorem{thm2}{Theorem}

\allowdisplaybreaks

\author{
  Julien Guyon$^{1,2}$,
  Thibault Jeannin$^{1}$,
  Benjamin Jourdain$^{1}$}

\date{}

\usepackage{fancyhdr}
\begin{document}

\begin{abstract}
We study martingality and strict local martingality in a broad class
of stochastic and path-dependent Volterra volatility models and
explain the underlying mechanisms.
Our martingality criteria allow rough kernels and rely on
non-accessibility of an explosive direction or one-sided growth
bounds on the Girsanov drift.
For smooth kernels, we derive sufficient conditions for strict local
martingality from an explosion criterion allowing multiplicative noise,
coupled factors and general initial input curves.
Our results recover classical criteria and yield new results
for Volterra and path-dependent models.
\end{abstract}

\maketitle

\renewcommand{\thefootnote}{\arabic{footnote}}

\footnotetext[1]{CERMICS, CNRS, ENPC, Institut Polytechnique de Paris, Marne-la-Vallée, France. Emails:
\href{mailto:julien.guyon@enpc.fr}{julien.guyon@enpc.fr},
\href{mailto:thibault.jeannin@enpc.fr}{thibault.jeannin@enpc.fr},
\href{mailto:benjamin.jourdain@enpc.fr}{benjamin.jourdain@enpc.fr}.}

\footnotetext[2]{NYU Tandon School of Engineering, Department of Finance and Risk Engineering, One MetroTech
Center, Brooklyn, NY 11201, USA. Email:
\href{mailto:julien.guyon@nyu.edu}{julien.guyon@nyu.edu}.\\ This research was supported by
the chair {\it Risques Financiers}, Fondation du Risque, and by the BNP Paribas chair {\it Futures of Quantitative Finance}.}

\makeatletter
\let\@fnsymbol\@arabic 
\makeatother

\renewcommand{\thefootnote}{} 
\addtocounter{footnote}{-1} 
\renewcommand{\thefootnote}{\arabic{footnote}} 

\bigskip
\noindent\textbf{Keywords:}
path-dependent volatility; stochastic Volterra equations; stochastic volatility; martingale property.

\bigskip
\noindent\textbf{Acknowledgements:}
The authors thank Paul Invernizzi for helpful discussions.

\bigskip
\tableofcontents

\section{Introduction}

\subsection{Motivation}

In financial modeling, as stressed by \cite{SeppRakhmonov2023}, verifying the martingale property of the asset price is
often a first sanity check: when the discounted price is a strict local martingale, several well-known pathologies can appear, including violations of put--call parity and distortions in the asymptotic
behavior of implied volatility wings \cite{CoxHobson2005,HestonLoewensteinWillard2007,JacquierKellerRessel2018}.
To prove martingality, beyond Novikov's criterion, which only applies to rather simple cases, the standard approach is to use a change-of-measure argument that reduces the problem to the study of explosion for an auxiliary SDE, as in \cite[Chapter 3.7]{McKean1969}.
Such an approach is used by \cite{Sin1998}, \cite{LionsMusiela2007}, and \cite{Jourdain2004}, which study, at different levels of generality, the martingale property and moment integrability of ``lognormal volatility models'' of the form
\begin{align}
\forall t\in[0,+\infty),\qquad
\frac{dS_t}{S_t} &= e^{X_t}\,dW_t^\rho,\notag\\
X_t
&=
x_0 e^{-\lambda t}
+
\frac{\mu}{\lambda}\left(1-e^{-\lambda t}\right)
+
\alpha\int_0^t e^{-\lambda(t-s)}\,dW_s.
\label{eq:lognormal_sv_Z}
\end{align}
Here we take \(x_0,\mu\in\mathbb R\) and \(\lambda,\alpha>0\). We define the Brownian motion \(W^\rho\) by
\[
\forall t \geq 0,\qquad
W_t^\rho
:=
\rho W_t+\sqrt{1-\rho^2}\,W_t^\perp, \quad \text{with} \quad \rho \in [-1,1].
\]
Here \(W\) and \(W^\perp\) are two independent Brownian motions. Then
\(W^\rho\) is a Brownian motion whose instantaneous correlation with \(W\) is
equal to \(\rho\). The authors show that the martingale property holds only
when \(\rho\le0\).
Later, \cite{BernardCuiMcLeish2017} obtained more general martingality criteria for SDE-based models.
In non-Markovian frameworks, \cite{Gassiat2018} studies a class of fractional stochastic volatility models, including the rough Bergomi model, of the form
\begin{align}
\forall t\in[0,+\infty),\qquad
\frac{dS_t}{S_t}&=\sigma(t,X_t)\,dW_t^\rho,
\qquad
X_t=\int_0^t K(t-s)\,dW_s.
\label{eq:gassiat_rough_bergomi}
\end{align}
Under the coefficient and kernel assumptions in \cite{Gassiat2018}, \(\rho<0\) gives martingality. For \(\rho>0\), a sufficient growth condition on \(\sigma\), as in \cite[Theorem~1(2), Eq.~(1.3)]{Gassiat2018}, gives strict local martingality when \(K\) dominates the rough kernel defined by
\begin{align}\label{eq:gassiat_rough_bergomi_kernel}
\forall \tau\in(0,+\infty),\ \forall H\in(0,\frac{1}{2}),\qquad
K_H(\tau)&:=\frac{\tau^{H-\frac{1}{2}}}{\Gamma(H+\frac{1}{2})}.
\end{align}
Here \(\Gamma\) denotes the Euler gamma function.
Adding a linear drift to the volatility factor in \eqref{eq:gassiat_rough_bergomi} gives the extension
\begin{align}
\forall t\in[0,+\infty),\qquad
\frac{dS_t}{S_t}&=\sigma(t,X_t)\,dW_t^\rho,
\qquad
X_t
=
\beta(t)
+
\kappa\int_0^t K(t-s)X_s\,ds
+
\nu\int_0^t K(t-s)\,dW_s.
\label{eq:volterra_stein_stein_intro}
\end{align}
Here \(\beta:\mathbb R_+\to\mathbb R\) is continuous and deterministic, \(\kappa\in\mathbb R,\ \nu\in\mathbb R\setminus\{0\}\), and \(K\) is locally square-integrable. This framework includes the Volterra--Stein--Stein model studied in \cite{AbiJaberHainautMotte2025},
when \(\sigma(t,x)=x\) or \(\sigma(t,x)=|x|\), while the case \(\kappa=0\) recovers Volterra volatility models of Bergomi type. In particular, with an exponential specification of \(\sigma\), one recovers rough/Volterra Bergomi-type models, including the original Bergomi model and, more generally, exponential Ornstein--Uhlenbeck (exp-OU) models, whereas a polynomial specification gives, for instance, the Quintic model introduced in \cite{AbiJaberIllandLi2022Quintic}. For the exponential Bergomi specifications below, we take \(\kappa=0\),
\(\nu>0\), and \(\sigma(t,x)=\varphi(t)e^x\), where
\(\varphi:\mathbb R_+\to(0,\infty)\) is continuous.
For the Quintic specification, one uses
\(\sigma(t,x)=\varphi(t)(a_0+a_1x+a_3x^3+a_5x^5)\), with
\(a_0,a_1,a_3\ge0\) and \(a_5>0\), and an exponential kernel
in \eqref{eq:volterra_stein_stein_intro} with \(\kappa=0\) and \(\nu>0\).

A large literature on \emph{rough volatility} has developed in recent years to capture both the strong path-dependence of volatility and its strong irregularity in time. The paper \cite{rough} shows empirically that log-volatility behaves, across a wide range of time scales, like a fractional process with Hurst parameter \(H\) close to \(0.1\), far below the Brownian value \(H=\frac{1}{2}\). This provides an empirical motivation for modeling volatility through rough Volterra-type dynamics rather than through classical Markovian diffusions.

In the rough Heston model of El~Euch and Rosenbaum
\cite{ElEuchRosenbaum2019}, the variance satisfies a Volterra equation
with a fractional kernel.
The model retains the square-root diffusion coefficient
of the classical Heston model:
\begin{align}
\forall t\in[0,+\infty),\qquad
\frac{dS_t}{S_t}&=\sqrt{X_t}\,dW_t^\rho,
\notag\\
X_t
&=
X_0
+
\lambda
\int_0^t K_H(t-s)\left(\theta-X_s\right)\,ds
+
\nu
\int_0^t K_H(t-s)\sqrt{X_s}\,dW_s,
\label{RH}
\end{align}
where \(H\in(0,\frac{1}{2})\) and $K_H$ is defined by \eqref{eq:gassiat_rough_bergomi_kernel}, {\(X_0\ge0\), and \(\lambda,\theta,\nu>0\).}
The martingality of the rough Heston asset price was proved in
\cite{AbiJaberLarssonPulido2019} for all correlations \(\rho\).

A related line of research concerns quadratic rough models. Gatheral, Jusselin and Rosenbaum \cite{GatheralJusselinRosenbaum2020} introduced such a model to address the joint calibration of SPX and VIX smiles, showing that rough volatility combined with an explicit price-feedback (Zumbach) effect can overcome limitations of more classical continuous-path models. Dandapani, Jusselin and Rosenbaum \cite{DandapaniJusselinRosenbaum2021} provided a microstructural foundation for this class of models through quadratic Hawkes processes, while Rosenbaum and Zhang \cite{RosenbaumZhang2022} developed an efficient calibration method based on multi-factor approximations and deep neural networks. Using a resolvent, as proposed in
\cite{BourgeyGatheral2025}, we incorporate a linear mean-reversion term into the kernel.
In the fractional case, the transformed kernel is a Mittag--Leffler kernel.
This gives the following parametrization. We use the convention in which the sign of the Brownian driver in the asset price equation is absorbed into the correlation parameter \(\rho\). We define the function \(V\) by
\[
\forall x\in\mathbb R,\qquad V(x):=a_0+a_1(x-a_2)^2,
\qquad a_0\ge0,\quad a_1>0,\quad a_2\in\mathbb R.
\]
We set
\begin{align}
\forall t\in[0,+\infty),\qquad
\frac{dS_t}{S_t}&=\sqrt{V(X_t)}\,dW_t^\rho,
\qquad
X_t
=
\beta(t)+\int_{-\infty}^t K(t-s)\sqrt{V(X_s)}\,dW_s
\label{QRH2}
\end{align}
where \(K\) denotes the possibly transformed Volterra kernel and \(\beta\) the
corresponding input curve, chosen to be constant in \cite{BourgeyGatheral2025}. Integrals over negative times use a two-sided Brownian motion and a
prescribed history of \(X\). By a two-sided Brownian motion
\(W=(W_t)_{t\in\mathbb R}\), we mean a continuous process with
\(W_0=0\) such that \((W_t)_{t\ge0}\) and \((W_{-t})_{t\ge0}\)
are independent standard Brownian motions.
These history integrals are assumed to be well defined and to yield
continuous initial curves. A
minimal extension, the Quadratic Volterra Heston+ model of
\cite{BourgeyNoblePeturssonRosenbaumSzymanski2026QRHPlus}, adds a nonnegative term to \(V\) for \(x<x_0\) by defining
\begin{align} \label{V+}
\forall x\in\mathbb R,\qquad \widehat V(x)&:=V(x)+b_0\bigl((x_0-x)^+\bigr)^p,
\end{align}
with \(b_0\ge0\), \(x_0\in\mathbb R\), and \(p>0\), where \(x^+:=x\vee0\) and \(x^-:=(-x)\vee0\).
The model is given by
\begin{align}
\forall t\in[0,+\infty),\qquad
\frac{dS_t}{S_t}&=\sqrt{\widehat V(X_t)}\,dW_t^\rho,
\qquad
X_t=\beta(t)+\int_{-\infty}^t K(t-s)\sqrt{\widehat V(X_s)}\,dW_s.
\label{QRHplus}
\end{align}

When \(b_0=0\), \eqref{QRHplus} reduces to
\eqref{QRH2}.
For \(b_0=0\), \(a_0>0\), and \(\rho=1\), strict local martingality in the
Markovian case is already noted
in~\cite[p.~2]{BondiForde2026}.
The extension proved here concerns the smooth Volterra kernels in
Assumption~\myref{IV.2} below.

\medskip

Recent studies in quantitative finance, such as \cite{Zumbach2009},
\cite{Zumbach2010}, \cite{ChicheporticheBouchaud2014}, \cite{Bouchaud2021},
\cite{GuyonLekeufack2023} and \cite{AndreBoumezoudJourdain2023}, have highlighted the significant
path-dependence of volatility, which is largely explained by past asset returns---rather than by the current
asset price alone. These studies show that the instantaneous volatility of financial markets is mostly
endogenous and depends only marginally on additional random factors. Motivated by an empirical
investigation showing that a large fraction of the variations of an equity volatility index, such as the VIX, can be explained by \emph{past daily returns} and \emph{past squared daily returns}, Guyon and
Lekeufack \cite{GuyonLekeufack2023} introduced the class of path-dependent volatility (PDV)
models, where the instantaneous volatility depends explicitly on two factors: a (signed) return factor
\(R_{1,t}\) and a variance factor \(R_{2,t}\), built from past returns and past squared returns
weighted by memory kernels, respectively. A convenient formulation is the autonomous SVE
system of \cite{AndresJourdain2025}. We consider an asset price \((S_t)_{t\ge 0}\) evolving for \(t\ge 0\) with
instantaneous volatility \(\sigma_t=X_t\) according to
\begin{align} \label{PDVGL}
\forall t\in[0,+\infty), \qquad \frac{dS_t}{S_t} &= X_t\,dW_t, \\
X_t &= \beta_0+\beta_1 R_{1,t}+\beta_2 \sqrt{R_{2,t}}, \notag\\
R_{1,t} &= \int_{-\Delta}^{t} K_1(t-s)X_s\,dW_s, \notag\\
R_{2,t} &= \int_{-\Delta}^{t} K_2(t-s)X_s^{2}\,ds, \notag
\end{align}
where \(\beta_0\ge0\), \(\Delta\in[0,+\infty]\), and the nonnegative convolution
kernels \(K_1,K_2:(0,\infty)\to\mathbb R_+\) weight past returns and past
squared returns, respectively.
Throughout the paper, the PDV parameters satisfy
\(\beta_1<0\) and \(\beta_2>0\).

The negative-time path of the two-sided Brownian motion \(W\), independent of
its positive-time increments, encodes the history of the model before time zero,
namely the returns observed before time \(0\). When \(K_1\) and \(K_2\) each have two exponential components, the model admits a continuous-time four-factor PDV (4FPDV) representation: \(R_1\) and \(R_2\) can each be represented by two factors with different decay rates.
Nutz and Riveros Valdevenito \cite{NutzRiveros2024} prove existence
and pathwise uniqueness of global strong solutions for the 4FPDV model. In a more general (non-Markovian) Volterra-kernel setting, \cite{AndresJourdain2025} gives existence and uniqueness results for the Guyon--Lekeufack PDV
dynamics with general kernels. Gazzani and Guyon \cite{gazzani-guyon} give a method for pricing and calibration in the 4FPDV model. They add an ``upside'' term to generate sufficiently high volatility when \(R_{1,t}>0\) and improve the fit of implied volatility smiles. More precisely, they consider
\begin{align}
\forall t\in[0,+\infty),\qquad X_t&={\beta_0+}F_1(R_{1,t})+\beta_2\sqrt{R_{2,t}}, \label{eq:4PDV-upside}\\
\forall y\in\mathbb R,\qquad F_1(y)&:=\beta_1 y+\beta_{1,2}y^2\,\mathbf{1}_{\{y>0\}}. \notag
\end{align}
Here we assume \(\beta_{1,2}>0\).
We call the models obtained by allowing in \eqref{eq:4PDV-upside}
a function \(F_1\) that is nondecreasing for sufficiently large arguments 
\emph{upside-enhanced Volterra--Guyon--Lekeufack models}.
A related extension is the polynomial PDV model, studied in a working paper. We define its radicand and instantaneous volatility by
\begin{align}
\forall t\in[0,+\infty),\qquad
X_t
&:=
\beta_0+\beta_1 R_{1,t}+\beta_1^{(2)}R_{1,t}^2+\beta_2 R_{2,t},
\qquad
\sigma_t:=\sqrt{(X_t)^+}.
\label{eq:poly-pdv}
\end{align}
Here we have \(\beta_1^{(2)}>0\), and we assume that
\begin{align}
\forall T\in(0,+\infty),\qquad
\PP_0\left(\forall t\in[0,T],\ X_t\ge0\right)&=1.
\label{eq:poly-pdv-positivity}
\end{align}
A sufficient condition for \eqref{eq:poly-pdv-positivity} is
\(\beta_0+\beta_1y+\beta_1^{(2)}y^2\ge0\) for every \(y\in\mathbb R\),
since \(\beta_2>0\) and \(R_{2,t}\ge0\).
For all PDV models considered below, we assume that  $\sigma_0>0$. Within the family of path-dependent volatility (PDV) models introduced in \cite{GuyonLekeufack2023}, \cite{NutzRiveros2024} analyzes the martingale property in a two-factor PDV setting and shows that the asset price is a true martingale as long as the volatility remains bounded from below. One contribution of this article is to extend this result to more general kernels and to show that, 
for the
Volterra--Guyon--Lekeufack model~\eqref{PDVGL}, writing \(\sigma_t\) for the instantaneous volatility, under the conditional law \(\PP_0\) defined in Section~\ref{subsec:unified-framework}, for every \(T>0\),
\[
\PP_0\left(\forall t\in[0,T],\ \sigma_t\ge0\right)=1
\quad\Longrightarrow\quad
(S_t)_{t\in[0,T]}\ \text{is a martingale under} \quad \mathbb{P}_0.
\]

\medskip

Recently, Abi Jaber, Gassiat and Sotnikov~\cite{AbiJaberGassiatSotnikov2025}
proved sharp martingality criteria for finite-order signature volatility models.
Their results cover models of polynomial type and extend to signatures driven by Ornstein--Uhlenbeck processes, including the Quintic OU model. For Brownian signatures truncated at order \(N\ge2\)
and a nonzero coefficient of \(\frac{W_t^N}{N!}\) in the volatility, with \(\rho\ne0\),
the price is a martingale if and only if \(N\) is odd
and this coefficient has the opposite sign to \(\rho\).
\medskip
\medskip

Bondi and Pulido~\cite{BondiPulido2024} give a Feller-type boundary
test for scalar stochastic Volterra equations with nonsingular kernels
and constant initial input.
We study nonlinear functions of coupled Volterra factors with initial
curves encoding the past.
For smooth kernels, we prove an explosion criterion with multiplicative
noise by controlling the memory drift in a semimartingale representation.

\medskip

Martingality follows either by excluding an explosive direction while
controlling the opposite direction, or from one-sided growth bounds
on the Girsanov drift.
Both criteria allow rough kernels.
The second adapts Lyapunov methods for moment control to Volterra
factors that need not be semimartingales, using a Volterra chain-rule
inequality when Itô's formula cannot be applied.
Related estimates were used by Bourdon and
Jeannin~\cite{BourdonJeannin} for subcritical moments in the rough
Bergomi model.

 \medskip

The applications of our martingality criteria to several popular Volterra models in mathematical finance are summarized in Table~\ref{Tableapplication}. Below, we consider the following two generalized Volterra--Guyon--Lekeufack specifications, with \(\beta_0=\beta_{0,1}\) outside \(F_1\), in which the continuous function \(F_1\) is defined, for \(\beta_{0,1},\beta_{0,2},y_0\in\mathbb R\) and \(\beta_{1,1}<0<\beta_{1,2}\), by
\begin{align}
\forall y\in\mathbb R,\qquad
F_1(y)
&:=
{(\beta_{1,1}y)
\vee
(\beta_{0,2}-\beta_{0,1}+\beta_{1,2}y)},
\tag{A}\label{eq:F1A}
\\
\forall y\in\mathbb R,\qquad
F_1(y)
&:=
{(\beta_{1,1}y)
\vee
\left(\beta_{0,2}-\beta_{0,1}+\beta_{1,2}\sqrt{(y-y_0)^+}\right)}.
\tag{B}\label{eq:F1B}
\end{align}

\clearpage

\begin{table}[H]
\centering
\caption{Sufficient conditions for martingality and strict local
martingality in the listed models. Throughout the table, \(\rho\neq0\).
The martingality results allow rough kernels under the assumptions of the results cited in the last column,
whereas the strict local martingality results proved here require the smooth-kernel
regularity in Assumption~\myref{IV.2} below, but are  strongly expected to extend to rough kernels.
The same criteria hold with \(\PP\) replaced throughout by the
conditional law \(\PP_0\) defined in
Section~\ref{subsec:unified-framework}, for almost every initial history.}
\label{Tableapplication}

\scriptsize
\renewcommand{\arraystretch}{1.25}
\setlength{\tabcolsep}{2pt}

\begin{tabularx}{\linewidth}{
|>{\hsize=1.040000\hsize\linewidth=\hsize\RaggedRight\arraybackslash}X
|>{\hsize=0.560000\hsize\linewidth=\hsize\RaggedRight\arraybackslash}X
|>{\hsize=0.680000\hsize\linewidth=\hsize\RaggedRight\arraybackslash}X
|>{\hsize=1.720000\hsize\linewidth=\hsize\RaggedRight\arraybackslash}X|}
\hline
\textbf{Model}
&
\textbf{Martingality on $[0,T]$}
&
\textbf{Strict local martingality on $[0,T]$}
&
\textbf{Result and reference}
\\
\hline

Volterra--Stein--Stein \eqref{eq:volterra_stein_stein_intro}
&
always
&
never
&
Theorem~\ref{maintheorem}\myref{(A)} and~\myref{(Bii)} below.
Here \(\sigma(t,x)=x\) or \(\sigma(t,x)=|x|\). 
For \(\sigma(t,x)=x\), this recovers a consequence of
\cite[Theorem~3.3]{AbiJaber2022Gaussian}.
\\
\hline

Volterra--Heston \eqref{RH}
&
always
&
never
&
Theorem~\ref{maintheorem}\myref{(A)} and~\myref{(Bii)} below.
Our results also give this known result.
\\
\hline

Lognormal stochastic volatility \eqref{eq:lognormal_sv_Z}
&
$\rho<0$
&
$\rho>0$
&
Theorem~\ref{maintheorem}\myref{(Bii)} below;
Theorem~\ref{nonmartingality} and Lemma~\ref{lem:accessibility_models} below. Our results also give these known results.
\\
\hline

Quintic model of \cite{AbiJaberIllandLi2022Quintic}
&
$\rho<0$
&
$\rho>0$
&
Theorem~\ref{maintheorem}\myref{(Bii)} below;
Theorem~\ref{nonmartingality} and Lemma~\ref{lem:accessibility_models} below. Here \(a_5>0\). Our results also give these known results.
\\
\hline

Volterra--Bergomi \eqref{eq:gassiat_rough_bergomi}
&
\(\rho<0\), \(\sigma\ge0\), and
\(\sigma(t,x)\le C_T(1+|x|)\) for \(x\le0\), \(t\le T\)
&
\(\rho>0\) and, for some \(\chi>1\),
\(\sigma(t,x)\ge c_Tx^\chi\) for all sufficiently large \(x\)
and \(t\le T\), with \(c_T>0\)
&
Theorem~\ref{maintheorem}\myref{(A)} and~\myref{(Bii)} below;
Corollary~\ref{corollary} and Lemma~\ref{lem:accessibility_models} below.
Our results recover criteria in \cite{Gassiat2018}
for smooth kernels. For strict local martingality, \cite{Gassiat2018}
also allows rough kernels by exploiting the additive Gaussian
structure.
\\
\hline

Quadratic Volterra Heston+ \eqref{QRHplus} (\(1\le p\le2\))
&
-- 
&
Under Assumption~\myref{IV.2} below: always
&
Corollary~\ref{corollary} and Lemma~\ref{lem:accessibility_models} below, with \(a_0>0\), \(a_1>0\), and \(1\le p\le2\).
\\
\hline

Volterra--Guyon--Lekeufack \eqref{PDVGL}
&
\(\PP(\forall t\in[0,T],\ \sigma_t\ge0)=1\)
&
\(
\begin{gathered}
\exists t_0\in[0,T):\\[-1mm]
\PP(\sigma_{t_0}<0)>0
\end{gathered}
\)
&
Corollary~\ref{corollary} below. New result extending \cite{NutzRiveros2024},
which proves martingality when volatility is almost surely bounded below.
\\
\hline

Upside-enhanced Volterra--Guyon--Lekeufack, specification \eqref{eq:F1A}
&
never
&
always
&
Theorem~\ref{nonmartingality} and Lemma~\ref{lem:accessibility_models} below. New result.
\\
\hline

Upside-enhanced Volterra--Guyon--Lekeufack, specification \eqref{eq:F1B}
&
always
&
never
&
Theorem~\ref{maintheorem}\myref{(A)} and~\myref{(Bii)} below. New result.
\\
\hline

Polynomial PDV \eqref{eq:poly-pdv}
&
never
&
always
&
Theorem~\ref{nonmartingality} and Lemma~\ref{lem:accessibility_models} below. New model.
\\
\hline
\end{tabularx}
\end{table}

\section{Framework and main results}

\subsection{A unified Volterra framework}\label{subsec:unified-framework}

\noindent On a filtered probability space $(\Omega,\mathcal F,(\mathcal F_t)_{t\ge0},\PP)$ satisfying the usual conditions, we consider a Brownian motion
$(W_t)_{t\ge0}$. 
We assume that a regular conditional
probability given \(\mathcal F_0\) exists. Unless stated otherwise, we work under the regular conditional law defined by
\(\PP_0:=\PP(\,\cdot\mid\mathcal F_0)\), and write \(\E_0\) for its expectation.
We fix a realization of the \(\mathcal F_0\)-history outside an
exceptional \(\PP\)-null set, so that \(\PP_0\) is a fixed probability
measure. All assertions under this law are understood for almost every
initial history. For an integrable random variable \(Z\),
\(\E_0[Z\mid\mathcal F_t]\) denotes its conditional expectation
under this measure, given the full history up to time \(t\).
We consider two initial curves
\(G_1,G_2:\mathbb R_+\to\mathbb R\), an initial price \(S_0>0\), and a continuous adapted factor process
\(Y:=(Y_1,Y_2)\) solving \eqref{PDVeq}.
The curves \(G_1,G_2\), viewed as random elements of
\(C(\mathbb R_+,\mathbb R)\), and \(S_0\) are
\(\mathcal F_0\)-measurable.
For statements under \(\PP\), we additionally assume \(\E[S_0]<\infty\).
We enlarge the space, if necessary, by a Brownian motion $(W_t^\perp)_{t\ge0}$ independent of the factor system \((W,Y)\), and take, for \(t\ge0\), \(\mathcal F_t\) to be the usual augmentation of
\[
\mathcal F_0\vee
{\Gen\left((W_s)_{s\in[0,t]},(Y_s)_{s\in[0,t]},(W_s^\perp)_{s\in[0,t]}\right)}.
\]
Here \(\Gen(\cdot)\) denotes the generated sigma-field.
Under \(\PP_0\), \(W\) and \(W^\perp\) are independent Brownian motions.
We consider a discounted asset price \((S_t)_{t\ge0}\). For \(\rho\in[-1,1]\), we set \(W^\rho=\rho W+\sqrt{1-\rho^2}W^\perp\). The price satisfies
\begin{align*}
\forall t\in[0,+\infty), \qquad \frac{dS_t}{S_t} &= \sigma_t\,dW_t^\rho,
\qquad \sigma_t:=\sigma(t,X_t),
\end{align*}
with
\begin{align}
\forall t\in[0,+\infty), \qquad X_t &= \beta(t)+F_1(Y_{1,t})+F_2(Y_{2,t}), \notag\\
Y_{1,t} &=
G_1(t)+\int_0^t K_1(t-s)\Gamma_1(s,X_s)\,dW_s, \tag{$\mathrm{PDV}$}\label{PDVeq} \\
Y_{2,t} &= G_2(t)+\int_0^t K_2(t-s)\,\Gamma_2(s,X_s)\,ds, \notag
\end{align}
for deterministic functions
\[
{F_1,F_2:\mathbb R\to\mathbb R,} \qquad 
{\Gamma_1,\Gamma_2:\mathbb R_+\times\mathbb R\to\mathbb R,}
\qquad
\sigma:\mathbb R_+\times \mathbb{R} \to\mathbb R,
\qquad
\beta:\mathbb R_+\to\mathbb R.
\]
Throughout the paper, the kernels are convolution kernels:
{\(K_1,K_2:(0,\infty)\to\mathbb R_+\) are measurable, and we use
\(K_i(t-s)\) for \(s<t\).} 
We call \(Y_1\) and \(Y_2\) the first and second Volterra factors.
For the purposes of this paper, we extend the coefficient \(x\mapsto\sqrt{x}\), initially defined on \(\mathbb R_+\), continuously to \(\mathbb R\) by \(x\mapsto\sqrt{x^+}\).
\begin{example}
This framework includes most of the models considered in the literature.
The representation of each model is given in Table~\ref{tab:models_general_framework}. For the first five models, we take \(G_1=G_2=0\). 
In the Quadratic Volterra Heston+ row, \(G_1\) also includes the input curve
\(\beta\) from \eqref{QRHplus}, so the framework coefficient \(\beta\) is zero.
The first two rows use \(K_1=K_2=K\).
\begin{table}[H]
\centering
\caption{Examples of models covered by the general framework \eqref{PDVeq}.}
\label{tab:models_general_framework}

\scriptsize
\renewcommand{\arraystretch}{1.25}
\setlength{\tabcolsep}{2pt}

\begin{tabularx}{\linewidth}{
|>{\hsize=2.300000\hsize\linewidth=\hsize\RaggedRight\arraybackslash}X
|>{\hsize=0.242000\hsize\linewidth=\hsize\RaggedRight\arraybackslash}X
|>{\hsize=0.440000\hsize\linewidth=\hsize\RaggedRight\arraybackslash}X
|>{\hsize=0.740000\hsize\linewidth=\hsize\RaggedRight\arraybackslash}X
|>{\hsize=1.540000\hsize\linewidth=\hsize\RaggedRight\arraybackslash}X
|>{\hsize=1.185000\hsize\linewidth=\hsize\RaggedRight\arraybackslash}X
|>{\hsize=1.045000\hsize\linewidth=\hsize\RaggedRight\arraybackslash}X
|>{\hsize=0.990000\hsize\linewidth=\hsize\RaggedRight\arraybackslash}X
|>{\hsize=1.045000\hsize\linewidth=\hsize\RaggedRight\arraybackslash}X
|>{\hsize=0.473000\hsize\linewidth=\hsize\RaggedRight\arraybackslash}X|}
\hline
\textbf{Model}
& \(\boldsymbol{\rho}\)
& \(\boldsymbol{s}\)
& \(\boldsymbol{\varepsilon_s}\)
& \(\boldsymbol{\sigma(t,x)}\)
& \(\boldsymbol{F_1(y)}\)
& \(\boldsymbol{\Gamma_1(t,x)}\)
& \(\boldsymbol{F_2(r)}\)
& \(\boldsymbol{\Gamma_2(t,x)}\)
& \(\boldsymbol{\beta(t)}\) \\
\hline

Volterra--Stein--Stein \eqref{eq:volterra_stein_stein_intro}
& $\rho$
&
\(\pm1\)
&
\(s\sgn(\nu)\)
& \(\sigma(t,x)\)
& \(\nu y\)
& \(1\)
& $\kappa r$
& $x$
& $\beta(t)$
\\
\hline

Volterra--Heston \eqref{RH}
&
\(\rho\)
&
\(\pm1\)
&
\(s\)
&
\(\sqrt{x^+}\)
&
\(\nu y\)
&
\(\sqrt{x^+}\)
&
\(-\lambda r\)
&
\(x\)
&
\(\beta(t)\)
\\
\hline

Lognormal stochastic volatility \eqref{eq:lognormal_sv_Z}
&
\(\rho\)
&
\(\pm1\)
&
\(s\)
&
\(e^x\)
&
\(y\)
&
\(\alpha\)
&
\(0\)
&
\(0\)
&
\(\beta(t)\)
\\
\hline

Quintic model of \cite{AbiJaberIllandLi2022Quintic}
&
\(\rho\)
&
\(\pm1\)
&
\(s\)
&
\(\begin{gathered}\varphi(t)(a_0+a_1x\\{}+a_3x^3\\{}+a_5x^5)\end{gathered}\)
&
\(y\)
&
\(\nu\)
&
\(0\)
&
\(0\)
&
\(\beta(t)\)
\\
\hline

Volterra--Bergomi \eqref{eq:gassiat_rough_bergomi}
&
\(\rho\)
&
\(\pm1\)
&
\(s\)
&
\(\sigma(t,x)\)
&
\(y\)
&
\(\nu\)
&
\(0\)
&
\(0\)
&
\(\beta(t)\)
\\
\hline

Quadratic Volterra Heston+ \eqref{QRHplus} (\(1\le p\le2\))
& $\rho$
&
\(\pm1\)
&
\(s\)
& $\sqrt{\widehat V(x)}$
& $y$
& $\sqrt{\widehat V(x)}$
& $0$
& $0$
& $0$
\\
\hline

Volterra--Guyon--Lekeufack \eqref{PDVGL}
& $1$
&
\(\pm1\)
&
\(-s\)
& $x$
& $\beta_1 y$
& $x$
& $\beta_2\sqrt{r^+}$
& $x^2$
& $\beta_0$
\\
\hline

Upside-enhanced Volterra--Guyon--Lekeufack, specification \eqref{eq:F1A}
&
\(1\)
&
\(\pm1\)
&
\(+1\)
&
\(x\)
&
\eqref{eq:F1A}
&
\(x\)
&
\(\beta_2\sqrt{r^+}\)
&
\(x^2\)
&
\(\beta_{0,1}\)
\\
\hline

Upside-enhanced Volterra--Guyon--Lekeufack, specification \eqref{eq:F1B}
&
\(1\)
&
\(\pm1\)
&
\(+1\)
&
\(x\)
&
\eqref{eq:F1B}
&
\(x\)
&
\(\beta_2\sqrt{r^+}\)
&
\(x^2\)
&
\(\beta_{0,1}\)
\\
\hline

Polynomial PDV \eqref{eq:poly-pdv}
&
\(1\)
&
\(\pm1\)
&
\(+1\)
&
\(\sqrt{x^+}\)
&
\(\begin{gathered}\beta_1y\\{}+\beta_1^{(2)}y^2\end{gathered}\)
&
\(\sqrt{x^+}\)
&
\(\beta_2r\)
&
\(x^+\)
&
\(\beta_0\)
\\
\hline
\end{tabularx}
\end{table}
\end{example}
\begin{rk}
This representation is not unique: different choices of the functions \(F_i\), \(\Gamma_i\), and \(\sigma\) may correspond to the same model.
\end{rk}
We use the following assumptions only where stated. Kernel regularity is summarized in Table~\ref{tab:kernel-assumptions},
and polynomial-growth exponents in Table~\ref{homog}.

\begin{assumption}\label{assumption1}
The convolution kernels \(K_1,K_2\) are nonnegative and satisfy
\[
 \forall t\in(0,+\infty),\quad\forall i\in\{1,2\},\qquad
 \int_0^tK_i(s)\,ds>0.
\]

\begin{enumerate}

\myassitem[1.1]{I.3}
\textbf{Increment regularity of the kernels.}
There exist \(\delta_1,\delta_2>0\) such that, for every \(T>0\),
the following bounds hold with a constant \(C_T>0\):
\begin{align}
\forall h\in[0,T],\qquad
\left(
\int_0^{T-h}
|K_1(u+h)-K_1(u)|^2\,du
+
\int_0^h K_1(u)^2\,du
\right)^{\frac{1}{2}}
&\le
C_T h^{\delta_1},
\label{eq:K1_increment_regular_assumption}\\
\int_0^{T-h}
|K_2(u+h)-K_2(u)|\,du
+
\int_0^h K_2(u)\,du
&\le
C_T h^{\delta_2}.
\label{eq:K2_increment_regular_assumption}
\end{align}

\myassitem[1.2]{I.5}
\textbf{Polynomial growth of $F_i$ and $\Gamma_i$.} The functions \(G_i\), \(F_i\), \(\Gamma_i\), \(\sigma\), and \(\beta\) are continuous,
and there exist exponents \(\gamma_i\ge 0\), \(i\in\{1,2\}\),
\(f_{1,+},f_{1,-},f_2>0\), and a constant
\(C_{F_2}>0\), such that, with
$f_1:=f_{1,+}\vee f_{1,-}$,
\(\gamma_i f_i\le1\), \(i\in\{1,2\}\), and for every
\(T>0\), there exists \(C_T>0\) satisfying
\begin{align*}
\forall i\in\{1,2\},\quad
\forall (t,x)\in {[0,T]}\times\mathbb R,\qquad
|\Gamma_i(t,x)|
&\le
C_T(1+|x|^{\gamma_i}),\\
\forall y\in\mathbb R,\qquad
|F_2(y)|
&\le
C_{F_2}(1+|y|^{f_2}),
\end{align*}
and, for each $s\in\{-1,1\}$, there exists $\varepsilon_s\in\{-1,1\}$
such that, for some \(y_*>0\), $F_1(sy)$ has sign $\varepsilon_s$ for $y\ge y_*$ and
\[
\exists\,0<c\le C<+\infty
\text{ such that }
\forall y\in[y_*,+\infty),\qquad
cy^{f_{1,s}}\le \varepsilon_s F_1(sy)\le Cy^{f_{1,s}}.
\]
\end{enumerate}
\end{assumption}

\medskip

\begin{table}[H]
\centering
\caption{Polynomial-growth exponents in Assumption~\myref{I.5} for functions defined in Table~\ref{tab:models_general_framework}.}
\label{homog}

\scriptsize
\renewcommand{\arraystretch}{1.25}
\setlength{\tabcolsep}{2pt}

\begin{tabularx}{\linewidth}{
|>{\hsize=2.380000\hsize\linewidth=\hsize\RaggedRight\arraybackslash}X
|>{\hsize=0.700000\hsize\linewidth=\hsize\RaggedRight\arraybackslash}X
|>{\hsize=0.700000\hsize\linewidth=\hsize\RaggedRight\arraybackslash}X
|>{\hsize=0.840000\hsize\linewidth=\hsize\RaggedRight\arraybackslash}X
|>{\hsize=0.700000\hsize\linewidth=\hsize\RaggedRight\arraybackslash}X
|>{\hsize=0.840000\hsize\linewidth=\hsize\RaggedRight\arraybackslash}X
|>{\hsize=0.840000\hsize\linewidth=\hsize\RaggedRight\arraybackslash}X|}
\hline
\textbf{Model} & \(\boldsymbol{\gamma_1}\) & \(\boldsymbol{f_1}\) & \(\boldsymbol{\gamma_1 f_1}\) & \(\boldsymbol{\gamma_2}\) & \(\boldsymbol{f_2}\) & \(\boldsymbol{\gamma_2 f_2}\) \\
\hline

Volterra--Stein--Stein \eqref{eq:volterra_stein_stein_intro}
& \(0\) & \(1\) & \(0\) & \(1\) & \(1\) & \(1\) \\
\hline

Volterra--Heston \eqref{RH}
& \(\frac{1}{2}\) & \(1\) & \(\frac{1}{2}\) & \(1\) & \(1\) & \(1\) \\
\hline

Lognormal stochastic volatility \eqref{eq:lognormal_sv_Z}
&
\(0\)
&
\(1\)
&
\(0\)
&
\(0\)
&
\(1\)
&
\(0\)
\\
\hline

Quintic model of \cite{AbiJaberIllandLi2022Quintic}
&
\(0\)
&
\(1\)
&
\(0\)
&
\(0\)
&
\(1\)
&
\(0\)
\\
\hline

Volterra--Bergomi \eqref{eq:gassiat_rough_bergomi}
&
\(0\)
&
\(1\)
&
\(0\)
&
\(0\)
&
\(1\)
&
\(0\)
\\
\hline

Quadratic Volterra Heston+ \eqref{QRHplus} (\(1\le p\le2\))
& \(1\) & \(1\) & \(1\) & \(0\) & \(f_2>0\) & \(0\) \\
\hline

Volterra--Guyon--Lekeufack \eqref{PDVGL}
& \(1\) & \(1\) & \(1\) & \(2\) & \(\frac{1}{2}\) & \(1\) \\
\hline

Upside-enhanced Volterra--Guyon--Lekeufack, specification \eqref{eq:F1A}
& \(1\) & \(1\) & \(1\) & \(2\) & \(\frac{1}{2}\) & \(1\) \\
\hline

Upside-enhanced Volterra--Guyon--Lekeufack, specification \eqref{eq:F1B}
& \(1\) & \(1\) & \(1\) & \(2\) & \(\frac{1}{2}\) & \(1\) \\
\hline

Polynomial PDV \eqref{eq:poly-pdv}
& \(\frac{1}{2}\) & \(2\) & \(1\) & \(1\) & \(1\) & \(1\) \\
\hline
\end{tabularx}
\par\smallskip
\noindent The quadratic upside specification does not satisfy
Assumption~\myref{I.5}, since \(\gamma_1f_1=2>1\).
The two specifications \eqref{eq:F1A} and \eqref{eq:F1B} do satisfy its growth bounds.
\end{table}
For a bounded stopping time \(\theta\), we consider
\begin{align}
\forall t\in[0,+\infty),\qquad
\widetilde X_t^\theta
&=
\beta(t)+F_1(\widetilde Y_{1,t}^\theta)
+F_2(\widetilde Y_{2,t}^\theta), \notag\\
\widetilde Y_{1,t}^\theta
&={G_1(t)+}
\int_0^t K_1(t-s)
\Gamma_1(s,\widetilde X_s^\theta)\,dW_s \notag\\
&\quad
+
\rho
\int_0^t
\mathbf{1}_{\{s\ge\theta\}}
K_1(t-s)
\Gamma_1(s,\widetilde X_s^\theta)
\sigma(s,\widetilde X_s^\theta)\,ds,
\tag{\(\widetilde{\mathrm{PDV}}_\theta\)}\label{tildePDV}\\
\widetilde Y_{2,t}^\theta
&={G_2(t)+}
\int_0^t K_2(t-s)
\Gamma_2(s,\widetilde X_s^\theta)\,ds. \notag
\end{align}
We set \(\widetilde Y^\theta:=(\widetilde Y_1^\theta,\widetilde Y_2^\theta)\).
In particular, for all $t\in [0,\theta]$, we have $(\widetilde X_t^\theta,\widetilde Y_t^\theta)=(X_t,Y_t).$
By Lemma~\ref{martingality0} below, martingality of \(S\) after \(\theta\)
is equivalent to non-explosion of \(\widetilde X^\theta\).
For \(T,n>0\), we define the probability measure \(\mathbb Q_n\) by
\[
\left.\frac{d\mathbb Q_n}{d\PP_0}\right|_{\mathcal F_{\theta+T}}
:=\frac{S_{\theta+(T\wedge\tau_{(\theta,n)}^X)}}{S_\theta},
\]
where
\(\tau_{(\theta,n)}^X:=\inf\{u\ge0:\ |X_{\theta+u}|\ge n\}\).
(For all hitting times, we use the convention \(\inf\varnothing=+\infty\).)
Up to \(\theta+(T\wedge\tau_{(\theta,n)}^X)\), this change of measure adds the following drift to the first Volterra factor:
\begin{align}
&\rho\int_0^t\mathbf1_{\{s\ge\theta\}}K_1(t-s)
\Gamma_1(s,\widetilde X_s^\theta)
\sigma(s,\widetilde X_s^\theta)\,ds.
\label{girsanovdrift}
\end{align}
The main results use the well-posedness assumptions in
Assumption~\ref{standingassumption} below.


\begin{assumption}\label{standingassumption}
We require the systems \eqref{PDVeq} and \eqref{tildePDV}
to be well posed in the following sense.
\begin{enumerate}

\myassitem[2.1]{standing_i}
The system \eqref{PDVeq} has a global continuous solution \((X,Y)\), that is unique in law.

\myassitem[2.2]{standing_ii}
For every bounded stopping time \(\theta\), \eqref{tildePDV}
has a continuous solution
\((\widetilde X^\theta,\widetilde Y^\theta)\) up to explosion, that is unique in law. In particular, for every \(n>0\), with
$\tau_{(\theta,n)}^{\widetilde X^\theta}
:=
\inf\{u\ge0:\
|\widetilde X_{\theta+u}^{\theta}|\ge n
\},$ the conditional law given \(\mathcal F_\theta\) of
$(
\widetilde X_{t\wedge(\theta+\tau_{(\theta,n)}^{\widetilde X^\theta})}^{\theta},
\widetilde Y_{t\wedge(\theta+\tau_{(\theta,n)}^{\widetilde X^\theta})}^{\theta}
)_{t\ge0}$
is the same for all such solutions.
\end{enumerate}
\end{assumption}
We evaluate a solution of \eqref{tildePDV} only before its
explosion time
$\theta+
\lim\limits_{n\to\infty}\tau_{(\theta,n)}^{\widetilde X^\theta}.$ A stopped integrand is defined to be zero after its
stopping time, without evaluating the solution there.

\begin{rk}
The weak well-posedness requirement is stated in Assumption~\ref{standingassumption}.
Under Assumption~\ref{assumption1} and for the continuous initial curves
\(G_1,G_2\) in the framework, a sufficient condition for well-posedness
of \eqref{PDVeq} is that \(F_1,F_2\) are locally Lipschitz and
\(\Gamma_1,\Gamma_2\) are locally Lipschitz in space, locally uniformly
in time.
For \eqref{tildePDV}, it is sufficient to assume in addition that the
product \(\Gamma_1\sigma\), which appears in the additional drift,
is locally Lipschitz in space, locally uniformly in time.
The kernel increment bounds in Assumption~\myref{I.3} and polynomial growth bounds in Assumption~\myref{I.5} then
give global well-posedness of \eqref{PDVeq} and local well-posedness
of \eqref{tildePDV} up to explosion, including the conditional
uniqueness in law in Assumption~\myref{standing_ii}.

\medskip

This criterion applies to Volterra--Stein--Stein with \(\sigma(t,x)=x\)
or \(\sigma(t,x)=|x|\), lognormal stochastic volatility, the Quintic model,
and Volterra--Bergomi when \(\sigma\) is locally Lipschitz in space,
locally uniformly in time.
It also covers Quadratic Volterra Heston, since \(\sqrt V\) is smooth
for $a_0>0$.

Weak well-posedness can also hold when the coefficients are not locally
Lipschitz on the whole state space. The following models can be treated
by localization or by weak existence and uniqueness results.

\begin{itemize}
\item \textbf{PDV models.}
For the following models, we have \(K_2,\Gamma_2\ge0\), so assuming
\(G_2(t)>0\) for every \(t\ge0\) gives \(Y_{2,t}\ge G_2(t)>0\).
All lower bounds stated here also hold for the corresponding
transformed processes \eqref{tildePDV} before explosion.
When \(F_2(w)=\beta_2\sqrt{w^+}\), this prevents its singularity
at zero from being reached.

\begin{itemize}
\item \emph{Guyon--Lekeufack and specification~\eqref{eq:F1A}.}
Here \(F_1\) is locally Lipschitz.
For Guyon--Lekeufack, the above localization is used in
\cite{NutzRiveros2024,AndresJourdain2025} under their respective
kernel and initial-data conditions.

\item \emph{Specification~\eqref{eq:F1B}.}
If \(\beta_{0,1}+\beta_{1,1}y_0>\beta_{0,2}\), the affine branch
dominates near \(y_0\), where \(F_1(y)=\beta_{1,1}y\).
Since both branches are locally Lipschitz away from \(y_0\),
\(F_1\) is locally Lipschitz on \(\mathbb R\).

\item \emph{Polynomial PDV.}
If \(\beta_0+F_1(y)\ge0\) for every \(y\in\mathbb R\), then we have
\[
\forall t\ge0,\qquad
X_t=\beta_0+F_1(Y_{1,t})+\beta_2Y_{2,t}
\ge\beta_2G_2(t)>0.
\]
Thus the singularity of \(\Gamma_1(x)=\sqrt{x^+}\) at zero
is also avoided.
\end{itemize}
In each case, the stated conditions recover the locally Lipschitz
setting by localization.

\item \textbf{Rough and Volterra--Heston models.}
Here \(\Gamma_1(x)=\sqrt{x^+}\) is not locally Lipschitz at zero.
Existence and uniqueness in law are provided by the affine Volterra
results of Abi Jaber, Larsson and Pulido~\cite{AbiJaberLarssonPulido2019},
under their kernel and initial-data conditions.
\end{itemize}
\end{rk}

\subsection{Martingality}

\begin{notation}\label{not:sign-conventions}
The sign \(s\in\{-1,1\}\) specifies whether large positive or
large negative values of \(Y_1\) are considered.
The sign \(\varepsilon_s\) is the sign of \(F_1(sy)\) for sufficiently
large \(y\), as specified in Assumption~\myref{I.5}.
The sign \(s\) is chosen separately in each assumption or result where it
is used. When \(\rho\neq0\), \(\sgn(\rho)\) denotes the sign of \(\rho\).
\end{notation}

We first explain the sign conditions in
Assumptions~\ref{assumption_subcritical_bad_branch} and~\myref{V.1} below. For \(\rho\neq0\) and \(s\in\{-1,1\}\), we have
\begin{align} 
s\,\widetilde Y_{1,t}^{\theta}
&=
sG_1(t)
+
s
\int_{0}^{t}
K_1(t-u)
\Gamma_1(u,\widetilde X_u^{\theta})\,dW_u \notag
\\
&\qquad
+
s\rho
\int_{0}^{t}
\mathbf{1}_{\{u\ge\theta\}}
K_1(t-u)
\Gamma_1(u,\widetilde X_u^{\theta})
\sigma(u,\widetilde X_u^{\theta})\,du.
\label{opeho}
\end{align}
To fix ideas, we first discuss the case where
\(s\rho\Gamma_1\sigma\ge0\).
\begin{itemize}
\item {\textbf{Case 1: Potentially destabilizing drift.}}
When \(s\rho\Gamma_1\sigma\ge0\), the Girsanov drift tends to increase \(s\widetilde Y_1^\theta\) and push it towards large positive values. By Assumption~\myref{I.5}, there exists \(z^*>0\) such that
\[
\forall z\ge z^*,\qquad
\varepsilon_s F_1(sz)\ge0.
\]
If \(s\,\widetilde Y_{1,t}^{\theta}\ge z^*\), then
$F_1(\widetilde Y_{1,t}^\theta)
=
F_1(s
(s\,\widetilde Y_{1,t}^{\theta}))$
satisfies \(\varepsilon_s F_1(\widetilde Y_{1,t}^\theta)\ge0\), and
\begin{align} \label{varespilonx}
\varepsilon_s \widetilde X_t^\theta
&=
\varepsilon_s \beta(t)
+
|
F_1(\widetilde Y_{1,t}^\theta)|
+
\varepsilon_s F_2(\widetilde Y_{2,t}^\theta).
\end{align}
Heuristically, growth of \(s\widetilde Y_1^\theta\) therefore
tends to increase \(\varepsilon_s\widetilde X^\theta\) and push it towards large positive values
through \eqref{varespilonx}, once the contribution of \(F_2\)
is controlled. If \(s\rho\Gamma_1\sigma\) grows sufficiently
in the direction, \(\varepsilon_s x\to+\infty\), in the sense of \eqref{eq:explosive_drift_growth},
the additional drift in \eqref{opeho}
can in turn push \(s\widetilde Y_1^\theta\) further upward,
in the direction \(s\widetilde Y_1^\theta\to+\infty\).
The explosion mechanism is summarized in Figure~\ref{fig:feedbackloop}.
The exponent \(\chi\) is defined in \eqref{eq:explosive_drift_growth}
of Assumption~\myref{V.1} below.
\begin{figure}[H]
\centering
\resizebox{0.95\textwidth}{!}{%
\begin{tikzpicture}[
    box/.style={
        draw,
        rounded corners,
        align=center,
        text width=3.7cm,
        minimum height=1.35cm,
        inner sep=5pt,
        font=\small
    },
    arrow/.style={
        -{Stealth[length=2.8mm,width=2.2mm]},
        thick,
        shorten >=4pt,
        shorten <=4pt
    },
    lab/.style={
        font=\footnotesize,
        align=center,
        inner sep=2pt
    }
]

\node[box] (Z) at (-3.8,1.5) {
Large positive values of\\[2pt]
\(s\widetilde Y_{1,t}^{\theta}\)
};

\node[box] (X) at (3.8,1.5) {
Large \(\varepsilon_s \widetilde X_t^\theta\)
};

\node[box] (G) at (0,-2.4) {
Large positive values of\\[2pt]
\(s\rho\Gamma_1(t, \widetilde X_t^\theta)
\sigma(t,\widetilde X_t^\theta)\)
};

\draw[arrow]
(Z.east) to[out=15,in=165]
node[lab, above, yshift=8mm, text width=5.2cm] {
\(\varepsilon_s\widetilde X^\theta
\asymp(s\widetilde Y_1^\theta)^{f_{1,s}}\), by\\
a bound on \(F_2(\widetilde Y_2^\theta)\)\\
on a short interval;\\
Assumption~\myref{I.5} and \(\eqref{eq:X_bounds_dyadic_linear}\)
}
(X.west);


\draw[arrow]
(X.south) to[out=-70,in=45]
node[lab, right, text width=3.7cm] {
\(s\rho\Gamma_1\sigma\gtrsim
(\varepsilon_s\widetilde X^\theta)^\chi
\gtrsim(s\widetilde Y_1^\theta)^{f_{1,s}\chi}\),\\
with \(f_{1,s}\chi>1\), by
\(\eqref{eq:explosive_drift_growth}\)--\(\eqref{eq:structural_superlinear_threshold}\) of Assumption~\myref{V.1} below
}
(G.east);
\draw[arrow]
(G.west) to[out=135,in=-110]
node[lab, left, text width=4.5cm] {
{ Estimates from\\
Lemma~\ref{lem:dyadic_step_Y_linear} below\\
under \eqref{eq:dyadic_strip_bounds_linear}}
}
(Z.south);

\end{tikzpicture}
}
\caption{Positive feedback loop leading to explosion of \eqref{tildePDV}.}
\label{fig:feedbackloop}
\end{figure}
For a sufficiently short time interval \([\theta,t]\), the growth bounds on \(F_2,\Gamma_2\) in Assumption~\myref{I.5}, the condition
\(\gamma_2f_2\le1\), and the regularity of \(K_2\) in Assumption~\myref{I.3} give, before explosion,
\[
\sup_{u\in[\theta,t]}|F_2(\widetilde Y_{2,u})|
\le B+\frac{1}{4}\sup_{u\in[0,t]}|\widetilde X_u|,
\]
as shown in \eqref{eq:Y2_absorption_bound} in the proof of explosion below. 
The following bounds hold when \(s\widetilde Y_1\) remains bounded
from below up to explosion.
The contribution of \(F_1(\widetilde Y_1)\) when
\(s\widetilde Y_1<0\) is then bounded and can be absorbed into \(B\).
The identity
\[
\varepsilon_s\widetilde X_u
=\varepsilon_s\beta(u)
+\varepsilon_sF_1(\widetilde Y_{1,u})
+\varepsilon_sF_2(\widetilde Y_{2,u})
\]
and the polynomial bounds on \(F_1\) in Assumption~\myref{I.5} give
\begin{align*}
c\left(\sup_{u\in[\theta,t]}(s\widetilde Y_{1,u})^+\right)^{f_{1,s}}-B
&\le \sup_{u\in[\theta,t]}\varepsilon_s\widetilde X_u\le B+C\left(\sup_{u\in[\theta,t]}(s\widetilde Y_{1,u})^+\right)^{f_{1,s}}.
\end{align*}
Thus,
\(s\widetilde Y_1\) explodes towards \(+\infty\) if and only if
\(\varepsilon_s\widetilde X\) explodes towards \(+\infty\),
and their explosion times coincide.

\medskip

\item {\textbf{Case 2: Stabilizing drift.}}
When $s\rho\Gamma_1\sigma\ge0$, the Girsanov drift stabilizes
the negative side: if $s\widetilde Y_1^\theta$ is negative, the Girsanov drift pushes $s\widetilde Y_1^\theta$ towards
zero and cannot drive it towards $-\infty$.
\end{itemize}
If \(s\rho\Gamma_1\sigma\le0\), the same discussion applies
with \(s\) replaced by \(-s\): the roles of the positive
and negative sides are interchanged.
\begin{rk}
Except for the Quintic and Stein--Stein models, all models studied here
satisfy
\[
\forall(t,x)\in\mathbb R_+\times\mathbb R,\qquad
\Gamma_1(t,x)\sigma(t,x)\ge0.
\]
The choice \(s=\sgn(\rho)\) therefore gives \(s\rho\Gamma_1\sigma\ge0\).
Where \(\Gamma_1\sigma\le0\), choosing \(s=-\sgn(\rho)\)
gives \(s\rho\Gamma_1\sigma\ge0\).
For the Quintic model, \(\Gamma_1\sigma\) has the sign of \(x\)
on both tails. When \(\rho>0\), both choices
\((s,\varepsilon_s)=(1,1)\) and \((-1,-1)\) therefore give
$s\rho\Gamma_1(t,x)\sigma(t,x)>0$ for sufficiently large $sx$.
\end{rk}

\begin{assumption}\label{assumption_subcritical_bad_branch}
Suppose that Assumption~\ref{assumption1}
holds. We use the polynomial growth exponents from Assumption~\myref{I.5}.

\begin{enumerate}
\myassitem[3.1]{III.1}
\textbf{Non-accessibility of the explosive region.}
We assume \(\rho\ne0\). There exists \(s\in\{-1,1\}\) such that,
for every \(T>0\), there are constants \(L_T\in\mathbb R\) and \(C_T>0\)
such that
\[
\sup_{t\in[0,T]}\varepsilon_s X_t\le L_T
\qquad\PP_0\text{-a.s.},
\]
and
\begin{align} \label{xboud}
\forall(t,x)\in[0,T]\times\mathbb R,\qquad
  \varepsilon_s x\le L_T\ \Longrightarrow\
  -s\rho\Gamma_1(t,x)\sigma(t,x)
  \le C_T\left(1+|x|^{\frac{1}{f_{1,-s}}}\right).
 \end{align}
\myassitem[3.2]{III.2}
\textbf{One-sided growth bound for the Girsanov drift.}
For every \(T>0\), there exist \(R_T,C_T\ge1\) such that
\[
\begin{gathered}
\forall s\in\{-1,1\},\quad\forall(t,x)\in[0,T]\times\mathbb R, \qquad
\varepsilon_sx\ge R_T\ \Longrightarrow\
s\rho\Gamma_1(t,x)\sigma(t,x)
\le C_T\left(1+|x|^{\frac{1}{f_{1,s}}}\right).
\end{gathered}
\]
\end{enumerate}
\end{assumption}
Sufficient conditions for martingality are given in Assumptions~\myref{III.1} and~\myref{III.2}. In Assumption~\myref{III.2}, other Lyapunov functions could be used to control the solutions of \eqref{tildePDV}. More precisely:
\begin{itemize}
\item
The upper bound on $\varepsilon_sX$ is given in Assumption~\myref{III.1}.
By the stopped change-of-measure argument, for every \(T>0\),
the bound \(\varepsilon_s\widetilde X_t^0\le L_T\) holds almost surely
for all \(t\in[0,T]\) before explosion. Thus, Assumption~\myref{III.1} rules out explosion along
$\varepsilon_s\widetilde X_t^0\to+\infty$, and hence along
$s\widetilde Y_{1,t}^0\to+\infty$.
It remains to control the opposite direction
$-s\widetilde Y_{1,t}^0\to+\infty$ through the one-sided growth
bound \eqref{xboud} in Assumption~\myref{III.1} with exponent $\frac{1}{f_{1,-s}}$.
If $s\rho\Gamma_1\sigma\ge0$, \eqref{xboud} is trivially satisfied.

\item On each tail selected by \(\varepsilon_s\),
Assumption~\myref{III.2} controls \(s\rho\Gamma_1(t,x)\sigma(t,x)\) from above when \(\varepsilon_s x\) is large, namely \(\varepsilon_s x\ge R_T\).
 \item The bound \eqref{xboud} holds with \(s=\sgn(\rho)\) if
 \[
  \forall(t,x)\in\mathbb R_+\times\mathbb R,\qquad
  \Gamma_1(t,x)\sigma(t,x)\ge0.
 \]

\end{itemize}

The one-sided drift bound in Assumption~\myref{III.2} is checked for the listed
models in Table~\ref{tab:verification-III3}.
\begin{table}[H]
\centering
\caption{Verification of Assumption~\myref{III.2}.}
\label{tab:verification-III3}

\scriptsize
\renewcommand{\arraystretch}{1.25}
\setlength{\tabcolsep}{2pt}

\begin{tabularx}{\linewidth}{
|>{\hsize=1.360000\hsize\linewidth=\hsize\RaggedRight\arraybackslash}X
|>{\hsize=1.120000\hsize\linewidth=\hsize\RaggedRight\arraybackslash}X
|>{\hsize=0.640000\hsize\linewidth=\hsize\RaggedRight\arraybackslash}X
|>{\hsize=0.880000\hsize\linewidth=\hsize\RaggedRight\arraybackslash}X|}
\hline
\textbf{Model}
&
\(\boldsymbol{\rho\Gamma_1(t,x)\sigma(t,x)}\)
&
\(\boldsymbol{(f_{1,+},f_{1,-})}\)
&
\textbf{Assumption}\par\textbf{\myref{III.2}}
\\
\hline

Volterra--Stein--Stein \eqref{eq:volterra_stein_stein_intro}
&
\(\rho \sigma(t,x)\)
&
\((1,1)\)
&
Yes for \(\sigma(t,x)=x\) or \(\sigma(t,x)=|x|\)
\\
\hline

Volterra--Heston \eqref{RH}
&
\(\rho 
x^+
\)
&
\((1,1)\)
&
Yes
\\
\hline

Lognormal stochastic volatility \eqref{eq:lognormal_sv_Z}
&
\(\rho \alpha e^x\)
&
\((1,1)\)
&
Yes if \(\rho<0\); no if \(\rho>0\)
\\
\hline

Quintic model of \cite{AbiJaberIllandLi2022Quintic}
&
\(\begin{gathered}\rho \nu\varphi(t)(a_0+a_1x +a_3x^3+a_5x^5)\end{gathered}\)
&
\((1,1)\)
&
Yes if \(\rho<0\); no if \(\rho>0\)
\\
\hline

Volterra--Bergomi \eqref{eq:gassiat_rough_bergomi}
&
\(\rho \nu\sigma(t,x)\)
&
\((1,1)\)
&
For \(\sigma\ge0\): yes if \(\sigma(t,x)\le C_T(1+|x|)\) on \(\rho x\ge0\), \(t\le T\)
\\
\hline

Quadratic Volterra Heston+ \eqref{QRHplus} (\(1\le p\le2\))
&
\(\rho 
\widehat V(x)
\)
&

\((1,1)\)

&
No if \(\rho\neq0\), for \(1\le p\le2\)
\\
\hline

Volterra--Guyon--Lekeufack \eqref{PDVGL}
&
\(\rho 
x^2
\)
&
\((1,1)\)
&
No
\\
\hline

Upside-enhanced Volterra--Guyon--Lekeufack, specification \eqref{eq:F1A}
&
\(\rho 
x^2
\)
&
\((1,1)\)
&
No
\\
\hline

Upside-enhanced Volterra--Guyon--Lekeufack, specification \eqref{eq:F1B}
&
\(\rho 
x^2
\)
&
\(\left(\frac{1}{2},1\right)\)
&
Yes
\\
\hline

Polynomial PDV \eqref{eq:poly-pdv}
&
\(\rho 
x^+
\)
&
\((2,2)\)
&
No
\\
\hline
\end{tabularx}
\end{table}

\begin{notation}
For a locally finite signed Borel measure \(\mu\) on \(\mathbb R_+\)
and a measurable function \(h\), we define
\[
\forall t\in[0,+\infty),\qquad
(h*\mu)(t)=(\mu*h)(t):=\int_{[0,t]}h(t-s)\,\mu(ds),
\]
whenever the integral is well defined. If \(h\) and \(g\) are two
locally integrable functions on \(\mathbb R_+\), we write
\[
(h*g)(t):=\int_0^t h(t-s)g(s)\,ds,
\qquad \text{for a.e. }t>0.
\]

A locally finite signed Borel measure \(\mathcal R_1\) on
\(\mathbb R_+\) is called a \emph{resolvent of the first kind}
of a locally integrable kernel \(K_1\) if
\[
(K_1*\mathcal R_1)(t)=(\mathcal R_1*K_1)(t)=1,
\qquad \text{for a.e. }t>0.
\]
\end{notation}

\begin{assumption}\label{assumption2}
\textbf{Kernel conditions.}
We use the following two conditions separately.
\begin{enumerate}

\myassitem[4.1]{IV.1}
\textbf{Nonnegative resolvent for the martingality criterion in Theorem~\ref{maintheorem}\myref{(Bii)} below.}
The kernel \(K_1\) belongs to \(L^1_{\mathrm{loc}}(\mathbb R_+)\), is nonnegative, and has a nonnegative resolvent of the first kind
of the form
\[
\mathcal R_1=a_{\rm res}\delta_0+\ell_{\rm res}(t)\,dt,
\]
where \(\delta_0\) is the unit mass at zero, \(a_{\rm res}\ge0\), and
\(\ell_{\rm res}\in L^1_{\mathrm{loc}}(\mathbb R_+)\) is nonnegative and
nonincreasing.

\myassitem[4.2]{IV.2}
\textbf{Smooth-kernel regularity for the explosion argument.}
The kernel \(K_1\) has a continuous extension to \([0,\infty)\) satisfying
\[
K_1(0)>0,
\qquad
K_1\in C^1([0,+\infty)),
\qquad
K_1'\in BV_{\mathrm{loc}}([0,+\infty)).
\]
Here \(BV_{\mathrm{loc}}\) denotes functions of locally bounded variation.
\end{enumerate}
\end{assumption}

\begin{rk}
The main classes of kernels and possible choices of the regularity
exponents are listed in Table~\ref{tab:kernel-assumptions}.
We define the two-parameter Mittag--Leffler function, for \(\alpha,\beta>0\), by
\(E_{\alpha,\beta}(z):=\sum_{k=0}^{\infty}\frac{z^k}{\Gamma(\alpha k+\beta)}\).
\begin{table}[H]
\centering
\caption{Regularity exponents and resolvent properties of standard kernels.}
\label{tab:kernel-assumptions}

\scriptsize
\renewcommand{\arraystretch}{1.35}
\setlength{\tabcolsep}{2pt}

\begin{tabularx}{\linewidth}{
|>{\RaggedRight\arraybackslash}p{0.23\linewidth}
|>{\RaggedRight\arraybackslash}p{0.22\linewidth}
|>{\Centering\arraybackslash}p{0.13\linewidth}
|>{\Centering\arraybackslash}p{0.13\linewidth}
|>{\Centering\arraybackslash}p{0.12\linewidth}
|>{\Centering\arraybackslash}X|}
\hline

\textbf{Kernel}
&
\textbf{Resolvent of the first kind}
&
\(\boldsymbol{\delta_1}\)
\textbf{in Assumption}\par\textbf{\myref{I.3}}
&
\(\boldsymbol{\delta_2}\)
\textbf{in Assumption}\par\textbf{\myref{I.3}}
&
\textbf{Assumption}\par\textbf{\myref{IV.1}}
&
\textbf{Assumption}\par\textbf{\myref{IV.2}}
\\
\hline

\textbf{Finite positive linear combination of exponentials}
\par\smallskip
\(\displaystyle
K(t)=\sum_{i=1}^{N}c_i e^{-\lambda_i t}
\)
\par
\(N\in\mathbb N^*,\ c_i>0,\ \lambda_i\ge0\)
&
\par\smallskip
\(\displaystyle
\mathcal R
=
\left(\sum_i c_i\right)^{-1}\delta_0+\ell_{\rm exp}(t)\,dt,
\)
\par
where \(\ell_{\rm exp}\) is nonnegative and nonincreasing~\cite[Chapter~5, Theorem~5.4]{GripenbergLondenStaffans1990}.
&
\(\displaystyle\frac{1}{2}\)
&
\(1\)
&
Yes
&
Yes
\\
\hline

\textbf{Fractional rough kernel}
\par\smallskip
\(\displaystyle
K_H(t)
=
\frac{t^{H-\frac{1}{2}}}
{\Gamma(H+\frac{1}{2})}
\)
\par
\(H\in(0,\frac{1}{2})\)
&
\(\displaystyle
\mathcal R_H(dt)
=
\frac{t^{-H-\frac{1}{2}}}
{\Gamma(\frac{1}{2}-H)}\,dt
\)
\par\smallskip
\cite[Table~1]{AbiJaberLarssonPulido2019}
&
\(H\)
&
\(\displaystyle H+\frac{1}{2}\)
&
Yes
&
No:
singular at \(0\)
\\
\hline

\textbf{Mittag--Leffler kernel}
\par
\textbf{(rough exponential)}
\par\smallskip
\(\displaystyle
K_{H,\lambda}(t)
=c\,t^{H-\frac{1}{2}}
\)
\par
\(\displaystyle
{}\times E_{H+\frac{1}{2},H+\frac{1}{2}}
\left(-\lambda t^{H+\frac{1}{2}}\right)
\)
\par
\(c>0,\ \lambda\ge0,\ H\in(0,\frac{1}{2})\)
&
\(\displaystyle
\mathcal R_{H,\lambda}(dt)
=
\frac{1}{c}
\left(
\frac{t^{-H-\frac{1}{2}}}
{\Gamma(\frac{1}{2}-H)}
+\lambda
\right)dt
\)
\par\smallskip
\cite[Table~1]{AbiJaberLarssonPulido2019}
&
\(H\)
&
\(\displaystyle H+\frac{1}{2}\)
&
Yes
&
No:
singular at \(0\)
\\
\hline
\end{tabularx}
\end{table}
\end{rk}

\begin{thm2}\label{maintheorem}
Suppose that Assumption~\ref{standingassumption} holds.
Then the following statements hold.
\begin{itemize}

\myitem[(A)]{(A)}
If \(\rho=0\), then \((S_t)_{t\ge0}\) is a martingale.

\myitem[(B)]{(B)}
Suppose in addition that \(\rho\neq0\) and that
Assumption~\ref{assumption1} holds. The following statements hold.
\begin{itemize}

\myitem[(B.i)]{(Bi)}
\textbf{Martingality without entering the explosive region.}
If Assumption~\myref{III.1} holds, then
\((S_t)_{t\ge0}\) is a martingale.

\myitem[(B.ii)]{(Bii)}
\textbf{Martingality under a one-sided drift bound.}
If Assumptions~\myref{III.2} and~\myref{IV.1} hold, then
\((S_t)_{t\ge0}\) is a martingale.

\end{itemize}
\end{itemize}
\end{thm2}

\subsection{Strict local martingality}

\begin{assumption}\label{assumption_explosive_branch}
\textbf{Explosive branch and accessibility.}
Suppose that Assumption~\ref{assumption1} holds. We use the notation
for the polynomial-growth exponents in Assumption~\myref{I.5}.
We assume that \(\rho\neq0\).
\begin{enumerate}
\myassitem[5.1]{V.1}
\textbf{Growth and monotonicity conditions for explosion.}
There exist
\(s\in\{-1,1\}\), \(z_0\in\mathbb R\),
\(x^*\ge0\), \(\chi>0\), such that
\begin{align}
z&\longmapsto {\varepsilon_s F_1(sz)}
\quad\text{is nondecreasing on }[z_0,+\infty).
\label{eq:power_branch_F1}
\end{align}
For every \(T>0\), there exists \(c_T>0\) such that
\begin{align}
\forall (t,x)\in[0,T]\times\mathbb R,\qquad
\varepsilon_s x\ge x^*
\ \Longrightarrow\
s\rho\Gamma_1(t,x)\sigma(t,x)
\ge c_T(\varepsilon_s x)^\chi.
\label{eq:explosive_drift_growth}
\end{align}
We also assume
\begin{align}
f_{1,s}\chi&>1.
\label{eq:structural_superlinear_threshold}
\end{align}

\myassitem[5.2]{V.2}
\textbf{Accessibility of the explosive region.}
With \(s,\varepsilon_s,x^*,z_0\) as in Assumption~\myref{V.1}, we define
the stopping time
\[
\tau_*
:=\inf\{u\ge0:\varepsilon_s X_u>x^*,\ sY_{1,u}>z_0\},
\]

and assume that
\[
\PP_0(\tau_*<\infty)>0.
\]
By continuity of \(X,Y_1\), this condition is equivalent to
\[
\exists t_0\ge0,\qquad
\PP_0\left(\varepsilon_s X_{t_0}>x^*,\ sY_{1,t_0}>z_0\right)>0.
\]
\end{enumerate}
\end{assumption}

We use \(+\infty+u=+\infty\) for \(u\ge0\).
We define the sigma-field \(\mathcal F_\infty\) by
\(\mathcal F_\infty:=\Gen(\bigcup_{t\ge0}\mathcal F_t)\).
For a possibly infinite stopping time \(\tau\), we define the sigma-field
\(\mathcal F_\tau\) by
\[
\mathcal F_\tau
:=\{A\in\mathcal F_\infty:
\forall t\in[0,+\infty),\quad A\cap\{\tau\le t\}\in\mathcal F_t\}.
\]

\begin{thm2}\label{nonmartingality}
Suppose that Assumptions~\ref{assumption1},~\ref{standingassumption}
and~\myref{IV.2} hold,
and that Assumption~\myref{V.1} holds.

\begin{itemize}
\myitem[(i)]{nonmartingality_i}
For every \(t_0\ge0\) and \(t>t_0\), if
\[
\PP_0\left(\varepsilon_s X_{t_0}>x^*,\
sY_{1,t_0}>z_0\right)>0,
\]
then we have \(\E_0[S_t]<\E_0[S_{t_0}]\), and
\((S_u)_{u\in[t_0,t]}\) is a strict local martingale
with respect to \((\mathcal F_u)_{u\in[t_0,t]}\).

\myitem[(ii)]{nonmartingality_ii}
If Assumption~\myref{V.2} holds, then, for every \(h>0\), we have
\[
\E_0\left[
\mathbf1_{\{\tau_*<\infty\}}S_{\tau_*+h}
\,\middle|\,\mathcal F_{\tau_*}
\right]
<S_{\tau_*}
\]
a.s.\ on \(\{\tau_*<\infty\}\).
For \(u\ge0\), the variable
\(\mathbf1_{\{\tau_*<\infty\}}S_{\tau_*+u}\) is defined to be zero on
\(\{\tau_*=\infty\}\).
\end{itemize}

\end{thm2}

\begin{rk} \label{1234}
The parameters used in the applications are listed in Table~\ref{tab:verification_assumption_III5}.
In this table, we choose \(s=\sgn(\rho)\).
The accessibility properties used for strict local martingality are stated in Lemma~\ref{lem:accessibility_models} below.
The Volterra--Bergomi row also assumes the growth condition stated
in Table~\ref{Tableapplication}.

\begin{table}[H]
\centering
\caption{Verification of Assumption~\myref{V.1}. Here \(\tau_*\) is defined in Assumption~\myref{V.2}.}
\label{tab:verification_assumption_III5}

\scriptsize
\renewcommand{\arraystretch}{1.25}
\setlength{\tabcolsep}{2pt}

\begin{tabularx}{\linewidth}{
|>{\hsize=1.880000\hsize\linewidth=\hsize\RaggedRight\arraybackslash}X
|>{\hsize=0.600000\hsize\linewidth=\hsize\RaggedRight\arraybackslash}X
|>{\hsize=0.640000\hsize\linewidth=\hsize\RaggedRight\arraybackslash}X
|>{\hsize=1.120000\hsize\linewidth=\hsize\RaggedRight\arraybackslash}X
|>{\hsize=0.800000\hsize\linewidth=\hsize\RaggedRight\arraybackslash}X
|>{\hsize=0.840000\hsize\linewidth=\hsize\RaggedRight\arraybackslash}X
|>{\hsize=1.040000\hsize\linewidth=\hsize\RaggedRight\arraybackslash}X
|>{\hsize=1.080000\hsize\linewidth=\hsize\RaggedRight\arraybackslash}X|}
\hline
\textbf{Model}
& \(\boldsymbol{\varepsilon_s}\)
& \(\boldsymbol{x^*}\)
& \(\boldsymbol{z_0}\)
& \(\boldsymbol{(\chi,f_{1,s})}\)
& \(\boldsymbol{f_{1,s}\chi>1}\)
& \textbf{Assumption}\par\textbf{\myref{V.1} applied}
& \(\boldsymbol{\PP_0(\tau_*\le t_0)>0}\)\newline
  \(\boldsymbol{\forall t_0>0}\) \\
\hline

Volterra--Stein--Stein \eqref{eq:volterra_stein_stein_intro}
&
\(\sgn(\rho\nu)\)
&
arbitrary
&
arbitrary
&
\((1,1)\)
&
false
&
No
&
Not needed.
\\
\hline

Volterra--Heston \eqref{RH}
& \(\sgn(\rho)\)
& arbitrary
& arbitrary
& \((1,1)\)
& false
& No
& Not needed. \\
\hline

Lognormal stochastic volatility \eqref{eq:lognormal_sv_Z}
&
\(\sgn(\rho)\)
&
arbitrary
&
arbitrary
&
\((\chi,1)\)\par\(\chi>1\)
&
true
&
Yes if \(\rho>0\); no if \(\rho<0\).
&
Yes.
\\
\hline

Quintic model of \cite{AbiJaberIllandLi2022Quintic}
&
\(\sgn(\rho)\)
&
arbitrary
&
arbitrary
&
\((5,1)\)
&
true
&
Yes if \(\rho>0\); no if \(\rho<0\).
&
Yes.
\\
\hline

Volterra--Bergomi \eqref{eq:gassiat_rough_bergomi}
&
\(\sgn(\rho)\)
&
large enough for \eqref{eq:explosive_drift_growth}
&
arbitrary
&
\((\chi,1)\)
&
true
&
Yes if \(\rho>0\).
&
Yes.
\\
\hline

Quadratic Volterra Heston+ \eqref{QRHplus} (\(1\le p\le2\))
& \(\sgn(\rho)\)
& large enough; arbitrary if \(a_0>0\)
& arbitrary
& \((2,1)\)
& true
& Yes
& Yes if \(a_0>0\). \\
\hline

Volterra--Guyon--Lekeufack \eqref{PDVGL}
& \(-1\)
& \(0\)
& \(0\)
& \((2,1)\)
& true
& Yes
& Depends on the kernels and initial curves. \\
\hline

Upside-enhanced Volterra--Guyon--Lekeufack, specification \eqref{eq:F1A}
& \(+1\)
& \(0\)
& \(\frac{\beta_{0,1}-\beta_{0,2}}{\beta_{1,2}-\beta_{1,1}}\)
& \((2,1)\)
& true
& Yes
& Yes. \\
\hline

Upside-enhanced Volterra--Guyon--Lekeufack, specification \eqref{eq:F1B}
&
\(+1\)
&
arbitrary
&
\(\begin{gathered}y_0\vee\\\frac{\beta_{0,2}-\beta_{0,1}}{\beta_{1,1}}\end{gathered}\)
&
\(\left(2,\frac{1}{2}\right)\)
&
false
&
No
&
Not established here; not needed for martingality.
\\
\hline

Polynomial PDV \eqref{eq:poly-pdv}
& \(+1\)
& \(0\)
& \(-\frac{\beta_1}{2\beta_1^{(2)}}\)
& \((1,2)\)
& true
& Yes
& Yes. \\
\hline
\end{tabularx}
\end{table}
\end{rk}

The following corollary applies to most models considered here,
for which \(F_1\) is linear, including the Volterra--Heston
\cite{ElEuchRosenbaum2019}, Bergomi \cite{Gassiat2018},
Quadratic Volterra Heston+
\cite{BourgeyNoblePeturssonRosenbaumSzymanski2026QRHPlus}, and
Guyon--Lekeufack \cite{GuyonLekeufack2023} models.

\begin{cor2}[Linear $F_1$] \label{corollary}
Suppose that Assumptions~\ref{assumption1}
and~\ref{standingassumption} hold,
that \(\rho\neq0\), that \(F_1(y)=Ay\)
for some \(A\in\mathbb R^\ast\), and that
\(\Gamma_1(t,x)\sigma(t,x)\ge0\) for every
\((t,x)\in\mathbb R_+\times\mathbb R\).
Then the following statements hold for every \(T>0\).

\begin{itemize}

\myitem[(i)]{corollary_i}
The price process \((S_t)_{t\in[0,T]}\) is a martingale if
\[
\begin{cases}
\PP_0(\forall t\in[0,T],\ X_t\le0)=1, & \text{if }\rho A>0,\\
\PP_0(\forall t\in[0,T],\ X_t\ge0)=1, & \text{if }\rho A<0.
\end{cases}
\]

\myitem[(ii)]{corollary_ii}
For strict local martingality, we assume that
Assumptions~\myref{IV.2} and~\myref{V.1} hold
with threshold \(x^*\ge0\) and signs
\(s=\sgn(\rho)\), \(\varepsilon_s=\sgn(\rho A)\).
For every \(t_0\in[0,T)\) and \(t_1\in(t_0,T]\),
\((S_u)_{u\in[t_0,t_1]}\) is a strict local martingale if
\[
\begin{cases}
\PP_0(X_{t_0}>x^*)>0, & \text{if }\rho A>0,\\
\PP_0(X_{t_0}<-x^*)>0, & \text{if }\rho A<0.
\end{cases}
\]

\end{itemize}
\end{cor2}
Indeed, the martingality result follows from
Theorem~\ref{maintheorem}\myref{(Bi)}, restricted to $[0,T]$, using Assumption~\myref{III.1}
with \(s=\sgn(\rho)\), \(\varepsilon_s=\sgn(\rho A)\), and \(L_T=0\).
For part (ii), the same signs give
\(\varepsilon_s F_1(sz)=|A|z\), so the growth condition in Assumption~\myref{I.5} holds with $f_{1,s}=1$, and the monotonicity condition in \eqref{eq:power_branch_F1} of Assumption~\myref{V.1}
holds for every $z_0\in\mathbb R$.
If \(\PP_0(\varepsilon_s X_{t_0}>x^*)>0\), then, for some \(z_0<0\), we have
$\PP_0(
\varepsilon_s X_{t_0}>x^*,\
sY_{1,t_0}>z_0
)>0.$ The result therefore follows from Theorem~\ref{nonmartingality}.

\medskip

\begingroup

\begin{samepage}
\begin{lemma}[Accessibility of the explosive region]
\label{lem:accessibility_models}
Suppose that the assumptions of Theorem~\ref{nonmartingality} hold.
The stopping time \(\tau_*\) defined in Assumption~\myref{V.2} satisfies
\begin{align}
\forall T>0,\qquad \PP_0(\tau_*<T)&>0
\label{eq:accessibility_models}
\end{align}
in each of the following cases:
\begin{itemize}
\item the lognormal stochastic volatility, Quintic, and Volterra--Bergomi models;
\item the polynomial PDV model and specification~\eqref{eq:F1A}, with \(X_0>0\) 
and the choices of \(x^*,z_0\) in
Table~\ref{tab:verification_assumption_III5};
\item the Quadratic Volterra Heston+ model, with \(a_0>0\) and \(1\le p\le2\).
\end{itemize}
In particular, Assumption~\myref{V.2} holds in these cases.
\end{lemma}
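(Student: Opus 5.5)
The plan is to prove \eqref{eq:accessibility_models} separately for each model class. In every case we exhibit an event of positive \(\PP_0\)-probability on which the pair \((\varepsilon_s X_t, sY_{1,t})\) enters the open region \(\{\varepsilon_s x>x^*,\ sy>z_0\}\) before time \(T\). Since this region is open and \(X,Y_1\) are continuous, it suffices to find some \(t<T\) with \(\PP_0(\varepsilon_s X_t>x^*,\ sY_{1,t}>z_0)>0\). The tool throughout is a support-type argument on short intervals. We first compare \(Y_1\) with the martingale \(M_t:=\int_0^tK_1(t-u)\Gamma_1(u,X_u)\,dW_u\), plus the deterministic curve \(G_1\). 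Before any localization exit, \(\Gamma_1(u,X_u)\) is bounded, and Assumption~\myref{I.3} controls the continuity of these stochastic convolutions. We then produce the required value by a large deviation of \(M\) in the direction \(s\), or by a small-ball event in which \(M\) stays near zero.

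\textbf{Gaussian models (lognormal, Quintic, Volterra--Bergomi).} Here \(F_2=0\), \(\Gamma_1\) is a nonzero constant and \(G_1=0\). So \(Y_{1,t}=\nu\int_0^tK_1(t-u)\,dW_u\) is a centred Gaussian variable with variance \(\nu^2\int_0^tK_1(u)^2\,du\), and this variance is positive for \(t>0\) by Assumption~\myref{IV.2} (\(K_1(0)>0\)). Also \(X_t=\beta(t)+Y_{1,t}\) with \(f_{1,s}=1\) and \(\varepsilon_s=s\) (Table~\ref{tab:models_general_framework}). Hence for any \(t\in(0,T)\) the event \(\{sY_{1,t}>\max(z_0,x^*+|\beta(t)|)\}\) has positive probability, by the full support of the nondegenerate Gaussian law. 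This gives the claim immediately.

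\textbf{PDV models (polynomial PDV, specification \eqref{eq:F1A}).} Here \(X_0>0\), and in Table~\ref{tab:verification_assumption_III5} we have \(x^*=0\) and \(\varepsilon_s=+1\). So we need \(X_t>0\) together with \(Y_{1,t}>z_0\). By continuity \(X_t>0\) holds on a random but a.s.\ positive time interval. The remaining task is to push \(Y_{1}\) above \(z_0\) within that interval. If \(G_1(0)>z_0\) this is immediate, by continuity at \(t=0\). Otherwise we use a localization. Let \(\tau:=\inf\{u:\ X_u\notin[X_0/2,2X_0]\}\wedge T\), and work with the stopped martingale \(M_{t\wedge\tau}\). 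On \(\{t<\tau\}\), the quadratic variation of \(\int_0^tK_1(t-u)\Gamma_1(u,X_u)dW_u\) is bounded above and below by positive multiples of \(\int_0^tK_1^2\), because \(\Gamma_1=\sqrt{x^+}\) or \(x\) is bounded away from \(0\) and \(\infty\) there. Since \(K_1\) is \(C^1\) with \(K_1(0)>0\), we can write
\[
M_t=K_1(0)\int_0^t\Gamma_1\,dW+\int_0^t(K_1(t-u)-K_1(0))\Gamma_1\,dW ,
\]
and the second term is of smaller order. The leading term is a time-changed Brownian motion. Using the Dambis--Dubins--Schwarz representation together with the Stroock--Varadhan support theorem, we will show that it reaches level \(z_0-G_1(t)\) before \(\tau\) with positive probability. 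The step I expect to be the main obstacle is doing this \emph{without} leaving \(\{X>0\}\). In the Guyon--Lekeufack-type structure, increasing \(Y_1\) changes \(X\) through \(F_1\). For specification \eqref{eq:F1A} with \(y\) large, the upward branch \(\beta_{1,2}y\) makes \(X\) increase, and \(z_0\) is exactly the kink, so pushing \(Y_1\) upward keeps \(X\) positive. For polynomial PDV, \(z_0\) is the vertex of the parabola, and positivity is guaranteed by \eqref{eq:poly-pdv-positivity}. So in both cases the drift of \(Y_1\) in the desired direction is compatible with \(X>0\). If the localization window is too tight, I would instead choose it adaptively, taking \(X\in[\eta,\infty)\) with \(\Gamma_1\) bounded via \(\sup|X|\le n\).

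\textbf{Quadratic Volterra Heston+ with \(a_0>0\).} Here \(x^*\) may be taken arbitrary, \(\Gamma_1=\sqrt{\widehat V}\ge\sqrt{a_0}>0\), and \(\varepsilon_s=s\) with \(X=Y_1\). So we need \(sY_{1,t}>\max(x^*,z_0)\) with positive probability. The quadratic variation of \(M\) is bounded below by \(a_0\int_0^tK_1^2>0\) and, up to the localization time \(\inf\{u:|X_u|\ge n\}\), above by a constant. The same decomposition (leading term \(K_1(0)\int\Gamma_1dW\) plus a small remainder), combined with the support of time-changed Brownian motion, shows that \(sM\) exceeds any prescribed level before time \(T\) with positive probability, here with no sign constraint to preserve. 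Finally, Assumption~\myref{V.2} follows in each case because \(\PP_0(\tau_*<\infty)\ge\PP_0(\tau_*<T)>0\).
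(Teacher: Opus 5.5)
\textbf{Gap in the PDV case.} Your Gaussian case matches the paper's argument, and your Quadratic Volterra Heston+ argument uses the same full-support idea. The PDV case, however, has a genuine gap. When $Y_{1,0}\le z_0$, pushing $Y_1$ upward to $z_0$ runs along the \emph{decreasing} part of $F_1$. Below the kink, \eqref{eq:F1A} equals $\beta_{1,1}y$ with $\beta_{1,1}<0$, and for the polynomial PDV, $z_0$ is the vertex, i.e.\ the global minimum of $F_1$. The increasing branch only helps once $Y_1>z_0$. So moving $Y_1$ up \emph{lowers} $X$, and at the crossing time $X=\beta+F_1(z_0)+F_2(Y_2)$. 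This can be $\le0$: $\beta_{0,1}+\beta_{1,1}z_0$ may be negative, and \eqref{eq:poly-pdv-positivity} only gives $X\ge0$. At $X=0$, $\Gamma_1$ vanishes in both models, so no support argument can run there.

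Your window $X\in[X_0/2,2X_0]$ makes the failure unavoidable, because it caps $Y_{2,t}\le G_2(t)+C_{X_0}\int_0^tK_2$. Suppose $F_2$ of this cap is too small to make $\beta+F_1(z_0)+F_2(Y_2)\ge X_0/2$. This happens, for instance, for \eqref{eq:F1A} with $\beta_{0,1}+\beta_{1,1}z_0<0$, $G_2$ small, $Y_{1,0}$ far below $z_0$ and $T$ small. Then $X$ exits the window before $Y_1$ reaches $z_0$, and the event you produce via Dambis--Dubins--Schwarz has probability zero. The support step cannot detect this, since the exit time is determined by the very path of $Y_1$ you are steering. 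Relaxing the window to $X\ge\eta$ does not help as long as $Y_1$ is moved directly upward.

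\textbf{Missing idea: the memory in $Y_2$.} The paper first drives $Y_1$ far to the left, along $p(s)=-Rs$ on $[0,b]$. Because $F_1$ is nonincreasing on $(-\infty,z_0]$, this keeps $X\ge X_0/2$, and it makes $X$ and $\sigma^2$ arbitrarily large on $[b/2,b]$ as $R\to\infty$. By \eqref{eq:accessibility_K2_window_lower_bound}, $\int_v^{v+b/2}K_2\ge\frac12\int_0^{b/2}K_2>0$ for $v\in[0,h]$. So this excursion deposits in $Y_2=G_2+K_2*\sigma^2$ a large contribution that persists on $[b,b+h]$, giving $X\ge\beta+F_1(z_0)+F_2(Y_2)\ge1$ there. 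Only then is $Y_1$ brought above $z_0$ inside $[b,b+h]$, through the quadratic part of $p$. Along the tube, $\sigma$ is bounded above by Volterra--Gr\"onwall and away from zero since $X\ge\frac{X_0}{2}\wedge1$, so Lemma~\ref{lem:accessibility_tube} applies.

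\textbf{A secondary point.} This affects both the PDV and Quadratic Volterra Heston+ cases. Calling $\int_0^t(K_1(t-u)-K_1(0))\Gamma_1\,dW_u$ a smaller-order term does not give the bounded drift that a Girsanov or support argument needs. After stochastic Fubini, its time derivative $\int_0^rK_1'(r-u)\Gamma_1\,dW_u$ is not a priori bounded. The paper obtains a bounded drift from the resolvent representation in Lemma~\ref{lem:stopped_SDE_shifted_Z}, whose memory drift is bounded by $\sup|\bar Z|$ and hence controlled inside the tube.
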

\end{samepage}

\begin{proof}
For the lognormal, Quintic, and Volterra--Bergomi models,
\(Y_{1,t}\) is Gaussian with positive variance under \(\PP_0\)
for every \(t>0\). Since \(X_t=\beta(t)+Y_{1,t}\) and \(\varepsilon_s=s\), we have
\[
\forall t>0,\qquad
\PP_0\left(\varepsilon_s X_t>x^*,\ sY_{1,t}>z_0\right)
=\PP_0\left(sY_{1,t}>\max\{z_0,x^*-s\beta(t)\}\right)>0.
\]
Taking \(t=T/2\) gives \eqref{eq:accessibility_models}.
The proof for the three remaining models is given in
Appendix~\ref{app:accessibility_models}.
\end{proof}
\endgroup

For the Volterra--Guyon--Lekeufack model, the condition in
Corollary~\ref{corollary}\myref{corollary_ii}
is \(\PP_0(\sigma_{t_0}<0)>0\) for some \(t_0<T\).
Its validity depends on the kernels and the initial curves.

\section{Girsanov reduction and preliminary results}

Lemma~\ref{martingality0} below relates the martingality of \(S\)
to non-explosion of the solution of \eqref{tildePDV}.

\begin{lemma}\label{martingality0}
Suppose that Assumption~\ref{standingassumption} holds
and that \(\sigma\) is locally bounded on \(\mathbb R_+\times\mathbb R\).
For a bounded stopping time \(\theta\) and \(T>0\), we then have
\[
\E_0\left[S_{\theta+T}\,\middle|\,\mathcal F_\theta\right]
=
S_\theta
\left(
1-\lim\limits_{n\to\infty}
\PP_0\left(\tau_{(\theta,n)}^{\widetilde X^\theta}\le T\,\middle|\,\mathcal F_\theta\right)
\right)
\quad\PP_0\text{-a.s.,}
\]
where \(\tau_{(\theta,n)}^{\widetilde X^\theta}\) is defined in
Assumption~\myref{standing_ii}.
Moreover, \((S_{\theta+t})_{t\in[0,T]}\) is a martingale under \(\PP_0\) with respect to \((\mathcal F_{\theta+t})_{t\in[0,T]}\) if and only if
$$
\lim\limits_{n\to\infty}
\PP_0\left(\tau_{(\theta,n)}^{\widetilde X^\theta}\le T\,\middle|\,\mathcal F_\theta\right)=0
\qquad\PP_0\text{-a.s.}
$$
\end{lemma}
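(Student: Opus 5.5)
We follow the classical McKean change-of-measure argument. Fix a bounded stopping time \(\theta\) and \(T>0\). For \(n>0\), let \(\tau_n:=\tau^X_{(\theta,n)}\) be the first time after \(\theta\) that \(|X|\) reaches \(n\).

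\textbf{Step 1: the stopped price is a true martingale.} Since \(\sigma\) is locally bounded and \(X\) is continuous, the stopped log-price has a bounded integrand up to time \(\theta+(T\wedge\tau_n)\). Indeed, on \([\theta,\theta+\tau_n]\) we have \(|\sigma(u,X_u)|\le C_n\) for \(u\le\theta+T\), using that \(\theta\) is bounded. Hence
\[
M^n_t:=\frac{S_{(\theta+t)\wedge(\theta+\tau_n)}}{S_\theta}
=\exp\Bigl(\int_\theta^{\theta+(t\wedge\tau_n)}\sigma_u\,dW^\rho_u-\tfrac12\int_\theta^{\theta+(t\wedge\tau_n)}\sigma_u^2\,du\Bigr)
\]
satisfies Novikov's condition conditionally on \(\mathcal F_\theta\). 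So \(M^n\) is a true martingale with \(\E_0[M^n_T\mid\mathcal F_\theta]=1\), and \(\mathbb Q_n\) is a well-defined probability measure.

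\textbf{Step 2: Girsanov and identification of the law.} By Girsanov, under \(\mathbb Q_n\) the process \(W_t-\rho\int_0^t\mathbf 1_{\{\theta\le u\le\theta+\tau_n\}}\sigma_u\,du\) is a Brownian motion on \([0,\theta+T]\). Substituting into \eqref{PDVeq}, the stopped pair \((X,Y)\) solves \eqref{tildePDV} up to \(\theta+(T\wedge\tau_n)\): it is driven by a Brownian motion and carries the drift \eqref{girsanovdrift}. The law of \(\mathcal F_\theta\) is unchanged, because \(M^n_0=1\). By the conditional uniqueness in law in Assumption~\myref{standing_ii}, the \(\mathbb Q_n\)-conditional law given \(\mathcal F_\theta\) of \((X,Y)\) stopped at \(\theta+\tau_n\) coincides with that of \((\widetilde X^\theta,\widetilde Y^\theta)\) stopped at \(\theta+\tau^{\widetilde X^\theta}_{(\theta,n)}\).

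This is the delicate step. One must check that uniqueness applies with \(\theta\)-conditioning under the new measure. One must also check that the event \(\{\tau_n>T\}\) is a measurable functional of the stopped path, which is clear by continuity. In particular,
\[
\mathbb Q_n\bigl(\tau_n>T\mid\mathcal F_\theta\bigr)=\PP_0\bigl(\tau^{\widetilde X^\theta}_{(\theta,n)}>T\mid\mathcal F_\theta\bigr).
\]

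\textbf{Step 3: passage to the limit.} We write
\[
\E_0[S_{\theta+T}\mid\mathcal F_\theta]
=\lim_n\E_0[S_{\theta+T}\mathbf 1_{\{\tau_n>T\}}\mid\mathcal F_\theta]
=S_\theta\lim_n\E_0[M^n_T\mathbf 1_{\{\tau_n>T\}}\mid\mathcal F_\theta]
=S_\theta\lim_n\mathbb Q_n(\tau_n>T\mid\mathcal F_\theta).
\]
The first equality holds by monotone convergence. Since \(X\) is global and continuous under \(\PP_0\) by Assumption~\myref{standing_i}, \(\tau_n\uparrow\infty\), so \(\mathbf 1_{\{\tau_n>T\}}\uparrow1\). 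The second equality holds because \(S_{\theta+T}=S_\theta M^n_T\) on \(\{\tau_n>T\}\). Combining with Step 2 gives the displayed formula. The probabilities \(\PP_0(\tau^{\widetilde X^\theta}_{(\theta,n)}\le T\mid\mathcal F_\theta)\) are nonincreasing in \(n\), so the limit exists.

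\textbf{Step 4: the martingale characterization.} \(S\) is a positive local martingale, hence a supermartingale. So \((S_{\theta+t})_{t\in[0,T]}\) is a martingale if and only if \(\E_0[S_{\theta+T}\mid\mathcal F_\theta]=S_\theta\) a.s. Here we use that a supermartingale with constant expectation is a martingale, applied conditionally, and that \(\E_0[S_\theta]<\infty\) by optional stopping for the supermartingale. Since \(S_\theta>0\), this condition is equivalent to the vanishing of the limit by the formula from Step 3.
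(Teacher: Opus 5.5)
Your proposal is correct and follows the paper's proof essentially step for step. Both use Novikov for the stopped stochastic exponential, Girsanov to identify the dynamics with \eqref{tildePDV} up to $\theta+(T\wedge\tau_{(\theta,n)}^X)$, and conditional uniqueness in law from Assumption~\myref{standing_ii}, which is exactly the hypothesis that settles the point you flag as delicate. Both then conclude with monotone convergence as $n\to\infty$ and the supermartingale characterization.

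One point to tighten: the equality of conditional laws holds only for paths on the window $[\theta,\theta+T]$, because after $\theta+T$ the Girsanov drift is switched off under $\mathbb Q_n$. The paper states it for $u\in[0,T]$, and this is enough, since $\{\tau_{(\theta,n)}^X>T\}$ depends only on that window.
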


\begin{proof}
We use the change-of-measure argument of
\cite{Sin1998,Jourdain2004,LionsMusiela2007,BernardCuiMcLeish2017}.

\medskip
\noindent\textit{Step 1: Change of measure.} We recall that
\(W_t^\rho=\rho W_t+\sqrt{1-\rho^2}\,W_t^\perp\). For \(n>0\), we set
$\tau_{(\theta,n)}^{X}:=\inf\{u\ge0:\ |X_{\theta+u}|\ge n\}.$
Since \(\theta\) is bounded, we fix a deterministic \(\bar T<\infty\)
such that \(\theta+T\le\bar T\) a.s. We define \(\mathbb Q_n\) on
\(\mathcal F_{\bar T}\) by
\[
\left.\frac{d\mathbb Q_n}{d\PP_0}\right|_{\mathcal F_{\bar T}}
:=\frac{S_{\theta+(T\wedge\tau_{(\theta,n)}^X)}}{S_\theta}.
\]
This ratio is the terminal value of the stochastic exponential with
respect to \(W^\rho\) whose integrand is
$\sigma(s,X_s)
\mathbf1_{\{\theta<s\le\theta+(T\wedge\tau_{(\theta,n)}^X)\}}.$ By local boundedness of \(\sigma\), this integrand is bounded on
\([0,\bar T]\). By Novikov's criterion, the exponential is a martingale.
It equals \(1\) at \(\theta\), so that
\[
\E_0\left[
\frac{S_{\theta+(T\wedge\tau_{(\theta,n)}^X)}}{S_\theta}
\,\middle|\,\mathcal F_\theta
\right]=1.
\]
Thus \(\mathbb Q_n\) and \(\PP_0\) agree on \(\mathcal F_\theta\).

\medskip
\noindent\textit{Step 2: Identification with \eqref{tildePDV}.} Girsanov's theorem shows that
\[
\left(
W_t-\rho\int_0^t\sigma(s,X_s)
\mathbf1_{\{\theta<s\le\theta+(T\wedge\tau_{(\theta,n)}^X)\}}\,ds
\right)_{t\in[0,\bar T]}
\]
is a Brownian motion under \(\mathbb Q_n\) and agrees with \(W\) on
\([0,\theta]\). Under \(\mathbb Q_n\), \((X,Y)\) solves \eqref{tildePDV},
driven by this Brownian motion, up to
\(\theta+(T\wedge\tau_{(\theta,n)}^X)\),
with history \((X_s,Y_s)_{s\in[0,\theta]}\).
Here \(\mathcal L_{\mathbb Q}(Z\mid\mathcal F_\theta)\) denotes the
conditional law of a random element \(Z\) under the probability measure
\(\mathbb Q\), given \(\mathcal F_\theta\).
Conditional uniqueness in law in Assumption~\myref{standing_ii} therefore gives
\begin{align}
\mathcal L_{\mathbb Q_n}\left(
(
X_{\theta+(u\wedge\tau_{(\theta,n)}^X)},
Y_{\theta+(u\wedge\tau_{(\theta,n)}^X)}
)_{u\in[0,T]}\,\middle|\,\mathcal F_\theta
\right)&=
\mathcal L_{\PP_0}\left(
(
\widetilde X^\theta_{\theta+(u\wedge
\tau_{(\theta,n)}^{\widetilde X^\theta})},
\widetilde Y^\theta_{\theta+(u\wedge
\tau_{(\theta,n)}^{\widetilde X^\theta})}
)_{u\in[0,T]}\,\middle|\,\mathcal F_\theta
\right).
\label{eq:stopped_conditional_state_law}
\end{align}
The definition of \(\mathbb Q_n\) and
\eqref{eq:stopped_conditional_state_law} give
\begin{align*}
\E_0\left[
\frac{S_{\theta+T}}{S_\theta}
\mathbf1_{\{\tau_{(\theta,n)}^X>T\}}
\,\middle|\,\mathcal F_\theta
\right]
&=\mathbb Q_n\left(
\tau_{(\theta,n)}^X>T\,\middle|\,\mathcal F_\theta
\right)=1-\PP_0\left(
\tau_{(\theta,n)}^{\widetilde X^\theta}\le T
\,\middle|\,\mathcal F_\theta
\right).
\end{align*}
By global existence in Assumption~\myref{standing_i}, we have
\(\mathbf1_{\{\tau_{(\theta,n)}^X>T\}}\underset{n\to\infty}{\uparrow}1\) a.s.
Conditional monotone convergence gives
\[
\E_0\left[
\frac{S_{\theta+T}}{S_\theta}
\,\middle|\,\mathcal F_\theta
\right]
=1-\lim\limits_{n\to\infty}\PP_0\left(
\tau_{(\theta,n)}^{\widetilde X^\theta}\le T
\,\middle|\,\mathcal F_\theta
\right).
\]
Since \(S\) is a nonnegative local martingale under \(\PP_0\), it is a supermartingale, and
\((S_{\theta+t})_{t\in[0,T]}\) is a martingale if and only if
\(\E_0[S_{\theta+T}\mid\mathcal F_\theta]=S_\theta\) \(\PP_0\)-a.s., which is equivalent to
$$
\lim\limits_{n\to\infty}
\PP_0\left(
\tau_{(\theta,n)}^{\widetilde X^\theta}\le T\,\middle|\,\mathcal F_\theta
\right)=0\qquad\PP_0\text{-a.s.}
$$
\end{proof}

\begin{proof}[Proof of Theorem~\ref{maintheorem}\myref{(A)}]
If \(\rho=0\), \(W^\perp\) is independent of the factor system.
For every \(t\ge0\), conditionally on \((X_u)_{u\in[0,t]}\),
the stochastic exponential \(S_t/S_0\) has a deterministic
square-integrable integrand and therefore has expectation \(1\).
Hence we have \(\E_0[S_t]=S_0\) for every \(t\ge0\), and the nonnegative
local martingale \(S\) is a martingale.
\end{proof}

\section{Martingality: Proof of Theorem~\ref{maintheorem}}

Throughout this section, \(T>0\) is fixed and \(\theta=0\).
We write \(\widetilde X\), \(\widetilde Y_1\), and \(\widetilde Y_2\)
instead of \(\widetilde X^0\), \(\widetilde Y_1^0\), and
\(\widetilde Y_2^0\), respectively.
We set
\[
\tau_n:=\inf\{t\ge0:\ |\widetilde X_t|\ge n\}.
\]
The constants \(C\), with or without subscripts, may depend on the initial
curves and change from line to line. They are independent of \(n\).

\begin{lemma}\label{lem:common_stopped_moment_criterion}
Suppose that Assumptions~\ref{assumption1} and~\ref{standingassumption} hold. The process defined by
\[
\forall t\in[0,T],\qquad
U_{1,t}^n:=\int_0^tK_1(t-s)\Gamma_1(s,\widetilde X_s)
\mathbf1_{\{s\le\tau_n\}}\,dW_s,
\]
admits a continuous modification. We use this continuous modification throughout the remainder of the paper. For some constant \(C_T\),
we assume that, for every \(n\in\mathbb N^*\), a.s.,
\begin{align}
\forall t\in[0,T],\qquad
\sup_{u\in[0,t]}|\widetilde X_{u\wedge\tau_n}|
&\le C_T\left(1+\sup_{u\in[0,t]}|U_{1,u}^n|^{f_1}
+\sup_{u\in[0,t]}|\widetilde Y_{2,u\wedge\tau_n}|^{f_2}\right).
\label{eq:common_X_control}
\end{align}
Then, for every \(r>0\), we have
\begin{align}
\sup_{n\in\mathbb N^*}
\E_0\left[\sup_{t\in[0,T]}|\widetilde X_{t\wedge\tau_n}|^r\right]
&\le C_{T,r},
\label{eq:common_stopped_moment_criterion}
\end{align}
and \((S_t)_{t\in[0,T]}\) is a martingale.
\end{lemma}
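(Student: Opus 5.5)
The plan has three parts: build the continuous modification of \(U_1^n\) by Kolmogorov's criterion, prove the uniform moment bound \eqref{eq:common_stopped_moment_criterion} by an absorption argument on short time steps that is then propagated over \([0,T]\), and deduce martingality from Lemma~\ref{martingality0} with \(\theta=0\).

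\emph{Step 1: continuous modification.} Fix \(n\). Since \(|\widetilde X_s|\le n\) for \(s\le\tau_n\), the growth bound in Assumption~\myref{I.5} makes the integrand \(\Gamma_1(s,\widetilde X_s)\mathbf 1_{\{s\le\tau_n\}}\) bounded by a constant \(C_{T,n}\). For \(0\le u<t\le T\) I will write
\[
U_{1,t}^n-U_{1,u}^n=\int_u^tK_1(t-s)\,\Gamma_1(s,\widetilde X_s)\mathbf 1_{\{s\le\tau_n\}}\,dW_s+\int_0^u\bigl(K_1(t-s)-K_1(u-s)\bigr)\,\Gamma_1(s,\widetilde X_s)\mathbf 1_{\{s\le\tau_n\}}\,dW_s .
\]
Applying BDG to each term at fixed \((u,t)\) and using \eqref{eq:K1_increment_regular_assumption} gives \(\E_0|U_{1,t}^n-U_{1,u}^n|^p\le C_{T,n,p}|t-u|^{p\delta_1}\) for every \(p\). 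Choosing \(p\delta_1>1\), Kolmogorov's criterion yields a continuous (indeed Hölder) modification.

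\emph{Step 2: quantitative moment bound.} I repeat the same computation keeping track of \(\widetilde X\) instead of \(n\). For \(p\) large and \(0\le a\le u<t\le b\le T\), I will split the integrand at time \(a\) into a past part (\(s\le a\)) and a recent part (\(s\in[a,b]\)). The recent part has increments bounded by
\[
C_p|t-u|^{p\delta_1}\,\E_0\Bigl[\sup_{v\le b}\bigl(1+|\widetilde X_{v\wedge\tau_n}|^{\gamma_1p}\bigr)\Bigr],
\]
and since it vanishes at time \(a\), Kolmogorov (or Garsia--Rodemich--Rumsey) gives \(\E_0\sup_{t\in[a,b]}|U^{n,\mathrm{rec}}_{1,t}|^p\le C_p(b-a)^{p\delta_1}\E_0\sup_{v\le b}(1+|\widetilde X_{v\wedge\tau_n}|^{\gamma_1p})\), with a prefactor that is small when \(b-a\) is small. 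The past part only involves \(\widetilde X\) on \([0,a]\), so the same estimates bound its supremum over \([a,b]\) by \(C\,\E_0\sup_{v\le a}(1+|\widetilde X_{v\wedge\tau_n}|^{\gamma_1p})\), with no smallness needed. For \(\widetilde Y_2\) I will use the deterministic bound
\[
|\widetilde Y_{2,t\wedge\tau_n}|\le |G_2(t)|+C\sup_{v\le a}\bigl(1+|\widetilde X_{v\wedge\tau_n}|^{\gamma_2}\bigr)+C\Bigl(\int_0^{b-a}K_2\Bigr)\sup_{v\le b}\bigl(1+|\widetilde X_{v\wedge\tau_n}|^{\gamma_2}\bigr),
\]
where \(\int_0^{b-a}K_2\le C(b-a)^{\delta_2}\) by \eqref{eq:K2_increment_regular_assumption}.

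Raise \eqref{eq:common_X_control} to the power \(r\), with \(p=f_1r\) for \(U_1^n\) and the power \(f_2r\) for \(\widetilde Y_2\). Since \(\gamma_1f_1\le1\) and \(\gamma_2f_2\le1\), Jensen bounds \(\E_0\sup|\widetilde X|^{\gamma_if_ir}\) by \(1+M(b)\), where \(M(b):=\E_0\sup_{v\le b}|\widetilde X_{v\wedge\tau_n}|^r\). This gives
\[
M(b)\le C\bigl(1+M(a)\bigr)+C\bigl(h^{f_1r\delta_1}+h^{f_2r\delta_2}\bigr)\bigl(1+M(b)\bigr),\qquad h=b-a .
\]
Now \(M(b)\le n^r<\infty\), and I choose \(h\) small independently of \(n\) so that the second coefficient is at most \(\tfrac12\). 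Absorbing then gives \(M(b)\le C(1+M(a))\). Iterating over the finitely many steps \(kh\) up to \(T\) gives \eqref{eq:common_stopped_moment_criterion}, uniformly in \(n\).

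\emph{Step 3: martingality.} By Markov's inequality,
\[
\PP_0(\tau_n\le T)\le \PP_0\Bigl(\sup_{t\le T}|\widetilde X_{t\wedge\tau_n}|\ge n\Bigr)\le \frac{C_{T,1}}{n}\longrightarrow0 .
\]
Lemma~\ref{martingality0} with \(\theta=0\), where \(\mathcal F_0\) is trivial under \(\PP_0\), then gives \(\E_0[S_T]=S_0\). Since \(S\) is a nonnegative supermartingale, \((S_t)_{t\in[0,T]}\) is a martingale.

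I expect the main obstacle to be the history split in Step 2. When \(\gamma_1f_1=1\) no contraction is available globally, so the argument must control the past contribution of the Volterra kernel \(K_1(t-s)\), \(s\le a\), on \([a,b]\) without smallness. This requires the Kolmogorov/GRR bound for the supremum of a non-semimartingale stochastic convolution, obtained via the increment condition in Assumption~\myref{I.3}. I would first try GRR with the fixed-time BDG increment bounds above, proving the sup-moment estimate as a separate lemma.
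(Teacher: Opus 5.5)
Your proof is correct, but Step~2 takes a different route from the paper. The paper neither splits the stochastic convolution into past and recent parts nor absorbs on short steps. After reducing by H\"older to \(r\) with \(rf_1>2\), \(r\delta_1f_1>2\) and \(rf_2\ge1\), it sets \(A_n(t):=\E_0[\sup_{u\in[0,t]}|\widetilde X_{u\wedge\tau_n}|^r]\). It then notes that the BDG increment bound for \(u<v\) involves only moments up to the later time: \(\E_0[|U^n_{1,v}-U^n_{1,u}|^{rf_1}]\le C_{T,r}|v-u|^{r\delta_1f_1}(1+A_n(v))\). Inserted into the fractional Sobolev estimate of Lemma~\ref{lem:fractional_sobolev_sup}, the weight \(\int_0^v(v-u)^{r\delta_1f_1-3}\,du\) turns this into \(\E_0[\sup_{u\in[0,t]}|U^n_{1,u}|^{rf_1}]\le C_{T,r}(1+\int_0^tA_n(v)\,dv)\). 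Together with the \(K_2\)-convolution bound for \(\widetilde Y_2\), this gives \(A_n(t)\le C_{T,r}(1+\int_0^t[1+K_2(t-s)]A_n(s)\,ds)\), which a Volterra--Gr\"onwall lemma closes. So the obstacle you anticipate (no contraction when \(\gamma_1f_1=1\), and the need to control the kernel's memory of the past) never arises in the paper's proof: the time weighting of the GRR double integral absorbs the memory, and no smallness in \(h\) is needed.

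Your route replaces the Volterra--Gr\"onwall lemma by an elementary finite iteration. The price is the past/recent decomposition, separate supremum estimates for both pieces, and the small factor \(h^{f_1r\delta_1}+h^{f_2r\delta_2}\). It goes through, because \eqref{eq:K1_increment_regular_assumption} controls the increments of the past piece on \([a,b]\) using moments up to time \(a\) only. When you write it out, two details need care:
\begin{itemize}
\item \eqref{eq:common_X_control} at time \(b\) involves \(\sup_{u\in[0,b]}|U^n_{1,u}|\), so you also need \(\E_0[\sup_{u\in[0,a]}|U^n_{1,u}|^{f_1r}]\le C(1+M(a))\). This follows from the same estimate on \([0,a]\).
\item The bound before \(\tau_n\) is \(|\widetilde X_s|\le n\vee|\widetilde X_0|\), not \(n\).
\end{itemize}
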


\begin{proof}
By H\"older's inequality, we only need to prove
\eqref{eq:common_stopped_moment_criterion} of Lemma~\ref{lem:common_stopped_moment_criterion} for \(r>0\) such that
\(rf_1>2\), \(r\delta_1f_1>2\), and \(rf_2\ge1\).
We fix such an \(r\) and \(n\in\mathbb N^*\), and set
\[
\forall t\in[0,T],\qquad
A_n(t):=\E_0\left[\sup_{u\in[0,t]}|\widetilde X_{u\wedge\tau_n}|^r\right].
\]
The function \(A_n\) is nondecreasing. Continuity of \(\widetilde X\) gives
\(A_n(t)\le(n\vee|\widetilde X_0|)^r<\infty\).

\medskip
\noindent\textbf{Step 1: Stochastic convolution estimate.}
By the polynomial growth bound on \(\Gamma_1\) in Assumption~\myref{I.5} and \(\gamma_1f_1\le1\), we have
\begin{align*}
\forall s\in[0,T],\qquad
\E_0\left[|\Gamma_1(s,\widetilde X_s)|^{rf_1}\mathbf1_{\{s\le\tau_n\}}\right]
&\le C_{T,r}\E_0\left[(1+|\widetilde X_s|^r)
\mathbf1_{\{s\le\tau_n\}}\right] \le C_{T,r}(1+A_n(s)).
\end{align*}
For \(u,v\in[0,T]\) with \(u<v\), the BDG inequality gives
\begin{align}
\E_0[|U_{1,v}^n-U_{1,u}^n|^{rf_1}]
&\le C_{T,r}\E_0\Biggl[
\Biggl(
\int_0^u|K_1(v-s)-K_1(u-s)|^2|\Gamma_1(s,\widetilde X_s)|^2\mathbf1_{\{s\le\tau_n\}}\,ds\notag\\*
&\qquad \qquad \qquad
+\int_u^vK_1(v-s)^2|\Gamma_1(s,\widetilde X_s)|^2\mathbf1_{\{s\le\tau_n\}}\,ds
\Biggr)^{\frac{rf_1}{2}}\Biggr]\notag\\*
&\le C_{T,r}(1+A_n(v))
\left(
\int_0^u|K_1(v-s)-K_1(u-s)|^2\,ds
+\int_u^vK_1(v-s)^2\,ds
\right)^{\frac{rf_1}{2}}\notag\\*
&\le C_{T,r}|v-u|^{r\delta_1f_1}(1+A_n(v)).
\label{eq:stochastic_convolution_increment_moment}
\end{align}
The second inequality uses weighted Jensen's inequality and
monotonicity of \(A_n\).
The last inequality follows from the increment bound on \(K_1\) in \eqref{eq:K1_increment_regular_assumption} of Assumption~\ref{assumption1}.
Since \(A_n(T)<\infty\), the increment estimate
\eqref{eq:stochastic_convolution_increment_moment}, with
\(r\delta_1f_1>2>1\), gives a continuous modification of \(U_1^n\)
by Kolmogorov's continuity theorem. (This ensures that \(\sup_{t\in[0,T]}|U_{1,t}^n|\) is well defined.)
Since \(U_{1,0}^n=0\), Lemma~\ref{lem:fractional_sobolev_sup} below
with exponent \(rf_1>2\) gives the first inequality below.
The second follows from \eqref{eq:stochastic_convolution_increment_moment}
by symmetry and Tonelli's theorem.
\begin{align}
\forall t\in[0,T],\qquad
\E_0\left[\sup_{u\in[0,t]}|U_{1,u}^n|^{rf_1}\right]
&\le C_{T,r}t\int_0^t\int_0^t
\frac{\E_0[|U_{1,v}^n-U_{1,u}^n|^{rf_1}]}
{|v-u|^3}\,du\,dv\notag\\*
&\le C_{T,r}t\int_0^t(1+A_n(v))
\int_0^v(v-u)^{r\delta_1f_1-3}\,du\,dv\notag\\*
&=\frac{C_{T,r}t}{r\delta_1f_1-2}
\int_0^t(1+A_n(v))v^{r\delta_1f_1-2}\,dv\notag\\*
&\le C_{T,r}\left(1+\int_0^tA_n(v)\,dv\right).
\label{eq:common_stochastic_maximal_bound}
\end{align}
The last inequality follows from \(r\delta_1f_1>2\) and
\(0\le v\le t\le T\).

\medskip
\noindent\textbf{Step 2: Stopped moments of \(\widetilde X\).}
For a nonnegative nondecreasing function \(\varphi\), nonnegativity of \(K_2\) gives
\begin{align*}
\forall 0\le u\le t\le T,\qquad
\int_0^uK_2(u-s)\varphi(s)\,ds
&=\int_0^uK_2(v)\varphi(u-v)\,dv\\
&\le\int_0^uK_2(v)\varphi(t-v)\,dv
\le\int_0^tK_2(v)\varphi(t-v)\,dv.
\end{align*}
Thus the convolution is nondecreasing.
For a continuous function \(g\) on \([0,T]\), we write
\(\|g\|_{\infty,[0,T]}:=\sup_{t\in[0,T]}|g(t)|\).
The polynomial growth bound on \(\Gamma_2\) in Assumption~\myref{I.5} gives
\[
\forall t\in[0,T],\qquad
\sup_{u\in[0,t]}|\widetilde Y_{2,u\wedge\tau_n}|
\le \|G_2\|_{\infty,[0,T]}
+C_T\int_0^tK_2(t-s)
\left(1+\sup_{v\in[0,s]}|\widetilde X_{v\wedge\tau_n}|^{\gamma_2}\right)ds.
\]
Weighted Jensen's inequality with exponent \(rf_2\ge1\),
\(K_2\in L^1([0,T])\), and \(\gamma_2f_2\le1\) give
\begin{align*}
\forall t\in[0,T],\qquad
\E_0\left[
\sup_{u\in[0,t]}|\widetilde Y_{2,u\wedge\tau_n}|^{rf_2}\right]
&\le C_{T,r}\left(1+\int_0^tK_2(t-s)
\E_0\left[\sup_{v\in[0,s]}|\widetilde X_{v\wedge\tau_n}|^{r\gamma_2f_2}\right]ds\right)\\
&\le C_{T,r}\left(1+\int_0^tK_2(t-s)A_n(s)\,ds\right).
\end{align*}
Combining this bound with \eqref{eq:common_X_control} of
Lemma~\ref{lem:common_stopped_moment_criterion} and
\eqref{eq:common_stochastic_maximal_bound}, we obtain
\[
\forall t\in[0,T],\qquad
A_n(t)\le C_{T,r}\left(
1+\int_0^t[1+K_2(t-s)]A_n(s)\,ds\right).
\]
The Volterra--Gr\"onwall lemma \cite[Lemma~2.2]{Zhang2010},
applied with the integrable kernel \(C_{T,r}(1+K_2)\), proves
\eqref{eq:common_stopped_moment_criterion} of
Lemma~\ref{lem:common_stopped_moment_criterion}.
Continuity of \(\widetilde X\) and Markov's inequality give
\[
\PP_0(\tau_n\le T)
\le n^{-r}\E_0\left[\sup_{t\in[0,T]}
|\widetilde X_{t\wedge\tau_n}|^r\right]
\le C_{T,r}n^{-r}\xrightarrow[n\to\infty]{}0.
\]
Lemma~\ref{martingality0} with \(\theta=0\) proves martingality on \([0,T]\).
\end{proof}

\subsection{Proof of Theorem~\ref{maintheorem}\myref{(Bii)}}

\leavevmode
\begingroup

\begin{lemma}[Volterra chain-rule]
\label{lem:convex_positive_resolvent_chain}
Suppose that Assumption~\myref{IV.1} holds. In particular,
\(K_1\in L^1([0,T])\) and \(K_1\ge0\).
Let the function \(g\) belong to \(L^\infty([0,T])\), and let the process $B$ be defined by
\[
\forall t\in[0,T],\qquad
B_t:=\int_0^t K_1(t-s)g_s\,ds.
\]
Let the function \(\Phi:\mathbb R\to\mathbb R_+\) belong to \(C^1(\mathbb R)\), be convex, and
satisfy \(\Phi(0)=0\). Then we have
\begin{align}
\forall t\in[0,T],\qquad
\Phi(B_t)
&\le
\int_0^t K_1(t-s)\Phi'(B_s)g_s\,ds .
\label{eq:convex_positive_resolvent_chain}
\end{align}
\end{lemma}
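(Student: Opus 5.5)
The plan is to invert $K_1$ through its resolvent $\mathcal R_1$ and reduce \eqref{eq:convex_positive_resolvent_chain} to a pointwise inequality. That inequality should follow from convexity of $\Phi$, $\Phi(0)=0$, and the sign and monotonicity of $\ell_{\rm res}$. We set $F_s:=\Phi(B_s)$ and $h_s:=\Phi'(B_s)g_s$. Note that $B_0=0$, so $F_0=0$.

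\textbf{Step 1 (reduction).} Since $\mathcal R_1*K_1=1$, we get $\mathcal R_1*B=1*g$. Hence $g$ is the derivative of $a_{\rm res}B_t+\int_0^t\ell_{\rm res}(t-s)B_s\,ds$. Define
\[
M(t):=\int_0^t h_s\,ds-(\mathcal R_1*F)(t).
\]
Suppose $M$ is nondecreasing with $M(0)=0$. Then $dM$ is a nonnegative measure. Since $K_1*\mathcal R_1=1$, we have $K_1*d(\mathcal R_1*F)=F$ for a.e.\ $t$, and by continuity this extends to all $t$ in the smooth case below. Therefore
\[
\int_0^tK_1(t-s)h_s\,ds-\Phi(B_t)=(K_1*dM)(t)\ge0,
\]
because $K_1\ge0$. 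So everything reduces to showing $h_t\ge \frac{d}{dt}(\mathcal R_1*F)(t)$.

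\textbf{Step 2 (pointwise inequality, smooth case).} First assume that $g$ is smooth, so that $B\in C^1$, and that $\ell_{\rm res}$ is $C^1$ on $(0,\infty)$, nonnegative and nonincreasing. For a $C^1$ function $Z$, we rewrite
\[
\int_0^t\ell_{\rm res}(t-s)Z_s\,ds=L(t)Z_t-\int_0^t\ell_{\rm res}(t-s)(Z_t-Z_s)\,ds,
\qquad L:=\int_0^{\cdot}\ell_{\rm res}.
\]
Differentiating gives
\[
\frac{d}{dt}\int_0^t\ell_{\rm res}(t-s)Z_s\,ds=\ell_{\rm res}(t)Z_t+\int_0^t\bigl(-\ell_{\rm res}'(t-s)\bigr)(Z_t-Z_s)\,ds.
\]
We apply this with $Z=B$, which gives $g$, and with $Z=F$. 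The $a_{\rm res}$ terms cancel, since $a_{\rm res}\Phi'(B_t)B_t'=a_{\rm res}F_t'$. We obtain
\[
h_t-\frac{d}{dt}(\mathcal R_1*F)(t)=\ell_{\rm res}(t)\bigl(\Phi'(B_t)B_t-\Phi(B_t)\bigr)+\int_0^t\bigl(-\ell_{\rm res}'(t-s)\bigr)\bigl[\Phi'(B_t)(B_t-B_s)-\Phi(B_t)+\Phi(B_s)\bigr]ds .
\]
Both terms are nonnegative. The first uses convexity together with $\Phi(0)=0$, which give $\Phi'(x)x\ge\Phi(x)$. The second uses the tangent-line inequality for $\Phi$ together with $\ell_{\rm res}'\le0$ and $\ell_{\rm res}\ge0$.

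\textbf{Step 3 (approximation; main obstacle).} The real difficulty is regularity. Here $g$ is only in $L^\infty$, and $\ell_{\rm res}$ is only a nonincreasing $L^1_{\rm loc}$ function, possibly singular at $0$ as in the fractional case, so $\ell_{\rm res}'$ is just a nonpositive measure. A cleaner route avoids differentiating $\ell_{\rm res}$ altogether: we write the bracketed increment inequality directly in terms of the nonnegative measure $-d\ell_{\rm res}$, which is legitimate by monotonicity. To handle $g$, we mollify it to $g^\varepsilon\to g$ a.e., keeping $\|g^\varepsilon\|_\infty\le\|g\|_\infty$. Then $B^\varepsilon:=K_1*g^\varepsilon$ converges to $B$ uniformly on $[0,T]$, since $K_1\in L^1$. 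We apply the smooth-case inequality to $B^\varepsilon$; if needed, we also replace $\ell_{\rm res}$ by a truncation $\ell_{\rm res}\wedge N$. The inequality \eqref{eq:convex_positive_resolvent_chain} for $B^\varepsilon$ then passes to the limit by dominated convergence. This works because $\Phi'$ is continuous, hence bounded on the bounded range of the $B^\varepsilon$, and $K_1\in L^1$. The delicate point I expect is checking that the identity $K_1*d(\mathcal R_1*F)=F$ remains valid in this approximation. I would verify it first for $C^1$ functions $F$ vanishing at $0$, using associativity of convolutions of measures, and then rely on the uniform convergence above.
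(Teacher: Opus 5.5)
Your proposal is correct and follows the paper's route. The paper also inverts $K_1$ through $\partial_t(\ell_{\rm res}*B)=g$, applies the Vergara--Zacher convexity inequality (your Step~2 identity is exactly their Lemma~2.1 with $u=B$, $u_0=0$, $H=\Phi$), convolves back with $K_1\ge0$, and treats the atom $a_{\rm res}\delta_0$ by the ordinary chain rule. The only difference is the regularization: the cited argument smooths the kernel, while you smooth $g$ and keep $\ell_{\rm res}$ exact through the measure $-d\ell_{\rm res}$.

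For your version, two adjustments are needed. First, drop the optional truncation $\ell_{\rm res}\wedge N$: it would break $K_1*\mathcal R_1=1$ in Step~1, and it is unnecessary, since $u\,\ell_{\rm res}(u)\to0$ and $\int_{(0,t]}u\,|d\ell_{\rm res}|(du)<\infty$. Second, take $g^\varepsilon$ vanishing near $0$, because for singular $K_1$ the function $B^\varepsilon$ is otherwise neither $C^1$ nor Lipschitz at $t=0$.
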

For \(a_{\rm res}=0\), \((k,l)=(\ell_{\rm res},K_1)\)
satisfies condition \((\mathcal{PC})\).
The convexity inequality and regularization argument of
Vergara--Zacher~\cite[Section~2, Lemma~2.1; Section~3, proof of Lemma~3.2]{VergaraZacher2017},
with \(u=B\), \(u_0=0\), and \(H=\Phi\), give
\eqref{eq:convex_positive_resolvent_chain}, since
\(\partial_t(\ell_{\rm res}*B)=g\).
The extension to \(a_{\rm res}>0\) follows immediately by the ordinary
chain rule for the atomic contribution: it is the \((\mathcal{PC}')\)
extended setting described in \cite[Section~1]{VergaraZacher2017}.
\endgroup

We set
\[
\forall t\in[0,T],\qquad
Z_t:=\widetilde Y_{1,t}-G_1(t).
\]
Under the resolvent condition in Assumption~\myref{IV.1}, \(Z\) need not be a semimartingale.
Thus Itô's formula cannot be used directly in Theorem~\ref{maintheorem}\myref{(Bii)}.
For \(n\in\mathbb N^*\), we write
\[
\forall t\in[0,T\wedge\tau_n],\qquad
Z_t=U_{1,t}^n+B_t,\qquad
B_t:=\rho\int_0^tK_1(t-s)
\Gamma_1(s,\widetilde X_s)\sigma(s,\widetilde X_s)\,ds.
\]
We estimate \(U_1^n\) by \eqref{eq:common_stochastic_maximal_bound}.

\begin{proof}[Proof of Theorem~\ref{maintheorem}\myref{(Bii)}]
\begingroup
For \(t\in[0,T]\), \(y,w\in\mathbb R\), we set
$x=\beta(t)+F_1(y)+F_2(w).$
By Assumption~\myref{I.5} and the boundedness of \(\beta\) on
\([0,T]\), there exist \(0<c\le C\) such that, for a sufficiently
large \(C_T\ge1\) and each \(s\in\{-1,1\}\), the conditions
\[
sy>0,\qquad |y|^{f_{1,s}}\ge C_T(1+|w|^{f_2})
\]
imply \(|y|\ge1\) and
\begin{align*}
c|y|^{f_{1,s}}
&\le\varepsilon_sF_1(y)\le C|y|^{f_{1,s}},\\
|\beta(t)|+|F_2(w)|
&\le C(1+|w|^{f_2})\le\frac{c}{2}|y|^{f_{1,s}}.
\end{align*}
With \(R_T\) as in Assumption~\myref{III.2}, increasing \(C_T\) if
necessary yields
\begin{align}
\varepsilon_s x
&\ge\varepsilon_sF_1(y)-|\beta(t)|-|F_2(w)|
\ge\frac{c}{2}|y|^{f_{1,s}}\ge R_T,\notag\\
\varepsilon_s x
&\le\varepsilon_sF_1(y)+|\beta(t)|+|F_2(w)|
\le2C|y|^{f_{1,s}}.
\label{eq:Bii_dominance_bounds}
\end{align}
By \eqref{eq:Bii_dominance_bounds}, we have \(|x|=\varepsilon_s x\ge R_T\).
Assumption~\myref{III.2} and \(|y|\ge1\) therefore give
\[
s\rho\Gamma_1(t,x)\sigma(t,x)
\le C_T\left(1+|x|^{\frac{1}{f_{1,s}}}\right)
\le C_T\left(1+(2C)^{\frac{1}{f_{1,s}}}|y|\right)\le C_T\left(1+(2C)^{\frac{1}{f_{1,s}}}\right)|y|.
\]
Since \((y^+)^{f_{1,+}}+(y^-)^{f_{1,-}}=|y|^{f_{1,s}}\) when
\(sy>0\), a further increase of \(C_T\) yields, for all
\((t,y,w)\in[0,T]\times\mathbb R^2\),
\begin{align}
(y^+)^{f_{1,+}}+(y^-)^{f_{1,-}}
&\ge C_T(1+|w|^{f_2})
\quad\Longrightarrow\quad
\operatorname{sgn}(y)\rho\Gamma_1(t,x)\sigma(t,x)
\le C_T|y|.
\label{eq:Bii_cone_drift}
\end{align}
\endgroup

\noindent\textbf{Step 1: Bound for \(Z^+\).}
We fix \(n\in\mathbb N^*\) and \(t\in[0,T]\). For each trajectory, we set
\[
m_t:=1+\sup_{u\in[0,t]}|U_{1,u}^n|^{f_1}
+\sup_{u\in[0,t]}|\widetilde Y_{2,u\wedge\tau_n}|^{f_2}.
\]
With \(C_T\) as in \eqref{eq:Bii_cone_drift}, we set
\[
A_T:=1+\|G_1\|_{\infty,[0,T]}
+\max\left\{C_T^{\frac{1}{f_{1,+}}},C_T^{\frac{1}{f_{1,-}}}\right\}.
\]
On \([0,t\wedge\tau_n]\), we have \(Z=U_1^n+B\), with
\[
\forall s\in[0,t\wedge\tau_n],\qquad
B_s=\rho\int_0^s K_1(s-u)
\Gamma_1(u,\widetilde X_u)\sigma(u,\widetilde X_u)\,du.
\]
Since \(m_t\ge1\) and \(f_{1,+}\le f_1\),
we have
\[
|U_{1,s}^n|^{f_{1,+}}
\le1+|U_{1,s}^n|^{f_1}\le m_t,
\]
so that
\begin{align} \label{etoile}
\forall s\in[0,t\wedge\tau_n],\qquad
|U_{1,s}^n|\vee|\widetilde Y_{2,s}|^{\frac{f_2}{f_{1,+}}}
\le m_t^{\frac{1}{f_{1,+}}}.
\end{align}
If \(B_s>A_Tm_t^{\frac{1}{f_{1,+}}}\), then, using \(m_t\ge1\),
\eqref{etoile}, and the definition of \(A_T\),
\(\widetilde Y_{1,s}=G_1(s)+U_{1,s}^n+B_s
>C_T^{\frac{1}{f_{1,+}}}m_t^{\frac{1}{f_{1,+}}}>0\).
Consequently, the definition of \(m_t\) gives
\[
(\widetilde Y_{1,s})^{f_{1,+}}>C_Tm_t
\ge C_T\left(1+|\widetilde Y_{2,s}|^{f_2}\right).
\]
Thus, if \(B_s>A_Tm_t^{\frac{1}{f_{1,+}}}\), the drift bound \eqref{eq:Bii_cone_drift},
with \(x=\widetilde X_s=\beta(s)+F_1(\widetilde Y_{1,s})+F_2(\widetilde Y_{2,s})\),
\(y=\widetilde Y_{1,s}\) and \(w=\widetilde Y_{2,s}\), gives
\begin{align}
\frac{\rho\Gamma_1(s,\widetilde X_s)\sigma(s,\widetilde X_s)}
     {m_t^{\frac{1}{f_{1,+}}}}
&\le C_T\frac{\widetilde Y_{1,s}}{m_t^{\frac{1}{f_{1,+}}}}\notag\\*
&\le C_T\left(1+\|G_1\|_{\infty,[0,T]}
+\frac{B_s}{m_t^{\frac{1}{f_{1,+}}}}\right)\notag\\*
&\le C_T(1+\|G_1\|_{\infty,[0,T]}+A_T)\left[1+
\left(\frac{B_s}{m_t^{\frac{1}{f_{1,+}}}}-A_T\right)^+\right],
\label{int2}
\end{align}
where we again used $m_t\ge1$ and \eqref{etoile}.
Continuity of \(\Gamma_1\sigma\) and \(\widetilde X\) gives
\[
\sup_{s\in[0,t\wedge\tau_n]}
|\Gamma_1(s,\widetilde X_s)\sigma(s,\widetilde X_s)|<\infty.
\]
For \(1<p\le2\), Lemma~\ref{lem:convex_positive_resolvent_chain}
therefore applies pathwise on \([0,t\wedge\tau_n]\) with
\begin{align*}
\forall x\in\mathbb R,\qquad \Phi(x)
&=\left[\left(\frac{x}{m_t^{\frac{1}{f_{1,+}}}}-A_T\right)^+\right]^p,\\
\forall s\in[0,t\wedge\tau_n],\qquad g_s
&=\rho\Gamma_1(s,\widetilde X_s)\sigma(s,\widetilde X_s),
\end{align*}
because, for \(p>1\), \(\Phi\) belongs to \(C^1(\mathbb R)\) and is convex and nonnegative,
with \(\Phi(0)=0\).
Since \(K_1\in L^1([0,T])\), \(B\) is bounded on
\([0,t\wedge\tau_n]\). 
Moreover, we have
\begin{align*}
\Phi'(B_s)
=\frac{p}{m_t^{\frac{1}{f_{1,+}}}}
\left[\left(\frac{B_s}{m_t^{\frac{1}{f_{1,+}}}}-A_T\right)^+\right]^{p-1}
\underset{p\downarrow1}{\longrightarrow}
\frac{\mathbf1_{\{B_s>A_Tm_t^{\frac{1}{f_{1,+}}}\}}}
{m_t^{\frac{1}{f_{1,+}}}}.
\end{align*}
Since \(pz^{p-1}\le2(1+z)\) for \(z\ge0\) and \(1<p\le2\),
boundedness of \(B,g\) gives an integrable bound after multiplication
by \(K_1(u-s)\).
Dominated convergence as \(p\downarrow1\), together with the drift bound \eqref{int2}, then gives  for all $u\in[0,t\wedge\tau_n]$,
\begin{align}
\left(\frac{B_u}{m_t^{\frac{1}{f_{1,+}}}}-A_T\right)^+
&\le\int_0^uK_1(u-s)
\mathbf1_{\{B_s>A_Tm_t^{\frac{1}{f_{1,+}}}\}} \frac{\rho\Gamma_1(s,\widetilde X_s)\sigma(s,\widetilde X_s)}
{m_t^{\frac{1}{f_{1,+}}}}\,ds\notag\\*
&\le C_T(1+\|G_1\|_{\infty,[0,T]}+A_T)\int_0^uK_1(u-s)\left[1+\left(\frac{B_s}{m_t^{\frac{1}{f_{1,+}}}}-A_T\right)^+\right]ds.
\label{chainrulefirst}
\end{align}
The Volterra--Gr\"onwall lemma \cite[Lemma~2.2]{Zhang2010} gives
\[
\sup_{u\in[0,t\wedge\tau_n]}
\left(\frac{B_u}{m_t^{\frac{1}{f_{1,+}}}}-A_T\right)^+\le C_T.
\]
Since for all $u\in[0,t\wedge\tau_n]$,
\begin{align*}
Z_u^+
&=(U_{1,u}^n+B_u)^+
\le \left(m_t^{\frac{1}{f_{1,+}}}+B_u\right)^+ \le m_t^{\frac{1}{f_{1,+}}}
\left[1+A_T+\left(\frac{B_u}{m_t^{\frac{1}{f_{1,+}}}}-A_T\right)^+\right],
\end{align*}
we obtain
\begin{align}
\sup_{u\in[0,t]}(Z_{u\wedge\tau_n}^+)^{f_{1,+}}&\le C_Tm_t.
\label{eq:Bii_positive_Z_bound}
\end{align}
\medskip
\noindent\textbf{Step 2: Bound for \(\widetilde X\).}
The argument for \(Z^-\) is obtained by replacing
\((Z,B,g,U_1^n,G_1,\widetilde Y_1)\) by
\((-Z,-B,-g,-U_1^n,-G_1,-\widetilde Y_1)\)
and \(f_{1,+}\) by \(f_{1,-}\) in the proof of
\eqref{eq:Bii_positive_Z_bound}.
The required drift bound follows from \eqref{eq:Bii_cone_drift}
with \(y=\widetilde Y_{1,s}<0\).
Thus, with the same \(m_t\), we have
\[
\sup_{u\in[0,t]}(Z_{u\wedge\tau_n}^-)^{f_{1,-}}\le C_Tm_t.
\]
Moreover, \(\widetilde Y_1=G_1+Z\) gives
\[
(\widetilde Y_{1,u})^\pm
\le\|G_1\|_{\infty,[0,T]}+Z_u^\pm.
\]
The tail bounds on \(F_1\) and its continuity therefore yield
\[
|F_1(\widetilde Y_{1,u})|
\le C_T\left(1+(Z_u^+)^{f_{1,+}}+(Z_u^-)^{f_{1,-}}\right).
\]
Since \(\beta,G_1\) are bounded on \([0,T]\), the polynomial growth bounds on \(F_1,F_2\) in Assumption~\myref{I.5} give
\begin{align}
\sup_{u\in[0,t]}|\widetilde X_{u\wedge\tau_n}|
&\le C_T\left(1+\sup_{u\in[0,t]}
((Z_{u\wedge\tau_n}^+)^{f_{1,+}}
+(Z_{u\wedge\tau_n}^-)^{f_{1,-}})
+\sup_{u\in[0,t]}|\widetilde Y_{2,u\wedge\tau_n}|^{f_2}\right)\notag\\*
&\le C_Tm_t.
\label{eq:Bii_X_control}
\end{align}
Hence \eqref{eq:common_X_control} of
Lemma~\ref{lem:common_stopped_moment_criterion} holds, which proves martingality
on \([0,T]\). Since \(T>0\) is arbitrary,
\((S_t)_{t\ge0}\) is a martingale.
\end{proof}

\subsection{Proof of Theorem~\ref{maintheorem}\myref{(Bi)}}

\begin{proof}[Proof of Theorem~\ref{maintheorem}\myref{(Bi)}]
We fix \(s\in\{-1,1\}\) as in Assumption~\myref{III.1}.
We use \(U_1^n\) as defined in Lemma~\ref{lem:common_stopped_moment_criterion}.
The constant \(L_T\) is the upper bound for \(\varepsilon_s X\) in Assumption~\myref{III.1}.
For every \(n\in\mathbb N^*\), the density of \(\mathbb Q_n\),
defined in the proof of Lemma~\ref{martingality0} with \(\theta=0\),
is strictly positive.
Thus \(\mathbb Q_n\) and \(\PP_0\) are equivalent on \(\mathcal F_T\),
and the upper bound on \(\varepsilon_s X\) in Assumption~\myref{III.1} holds
\(\mathbb Q_n\)-almost surely. The equality of the stopped laws in
\eqref{eq:stopped_conditional_state_law}, with \(\theta=0\), gives
\begin{align}
\PP_0\left(
\sup_{t\in[0,T]}\varepsilon_s\widetilde X_{t\wedge\tau_n}\le L_T
\right)&=1.
\label{eq:Bi_shifted_upper_bound}
\end{align}
\begingroup
By the bound on \(-s\rho\Gamma_1\sigma\) in Assumption~\myref{III.1}, the upper bound \eqref{eq:Bi_shifted_upper_bound}, and \(K_1\ge0\), we have
\begin{align}
\forall t\in[0,T\wedge\tau_n],\qquad
(s\widetilde Y_{1,t})^-
&\le |G_1(t)|+|U_{1,t}^n|+\int_0^t K_1(t-u)
(s\rho\Gamma_1(u,\widetilde X_u)\sigma(u,\widetilde X_u))^-\,du\notag\\*
&\le |U_{1,t}^n|+C_T\left(1+
\int_0^tK_1(t-u)|\widetilde X_u|^{\frac{1}{f_{1,-s}}}\,du\right),
\label{eq:Bi_negative_Y1_bound}
\end{align}
since \(G_1\) is bounded on \([0,T]\).
Since \(\varepsilon_sF_1(y)=\varepsilon_sF_1(s(sy))\), the bounds on \(F_1\) in Assumption~\myref{I.5} give
\[
\forall y\in\mathbb R,\qquad
(\varepsilon_s F_1(y))^-
\le C\left(1+((sy)^-)^{f_{1,-s}}\right).
\]
{Indeed, this negative part vanishes when \(sy\ge y_*\), by Assumption~\myref{I.5}, and is bounded on \(0\le sy\le y_*\) by continuity of \(F_1\).}
When \(sy<0\), the growth bound in the direction \(-s\) gives
\[
(\varepsilon_sF_1(y))^-
\le |F_1(y)|=|F_1(-s(-sy))|
\le C\left(1+(-sy)^{f_{1,-s}}\right).
\]
The upper bound \eqref{eq:Bi_shifted_upper_bound} therefore gives
\begin{align*}
\forall t\in[0,T\wedge\tau_n],\qquad
|\widetilde X_t|
&=\varepsilon_s\widetilde X_t+2(\varepsilon_s\widetilde X_t)^-\\*
&\le |L_T|+2|\beta(t)|
 +2(\varepsilon_s F_1(\widetilde Y_{1,t}))^-
 +2|F_2(\widetilde Y_{2,t})|\\*
&\le C_T\left(1+
((s\widetilde Y_{1,t})^-)^{f_{1,-s}}
+|\widetilde Y_{2,t}|^{f_2}\right).
\end{align*}
Raising both sides to the power \(\frac{1}{f_{1,-s}}\) and using the bound \eqref{eq:Bi_negative_Y1_bound} on
\((s\widetilde Y_{1,t})^-\), we obtain
\[
\forall t\in[0,T\wedge\tau_n],\qquad
|\widetilde X_t|^{\frac{1}{f_{1,-s}}}
\le C_T\left(1+|U_{1,t}^n|+|\widetilde Y_{2,t}|^{\frac{f_2}{f_{1,-s}}}
+\int_0^tK_1(t-u)|\widetilde X_u|^{\frac{1}{f_{1,-s}}}\,du\right).
\]
For each trajectory and \(t\in[0,T]\), the Volterra--Gr\"onwall lemma \cite[Lemma~2.2]{Zhang2010}, applied on
\([0,t\wedge\tau_n]\), gives
\[
\sup_{u\in[0,t]}|\widetilde X_{u\wedge\tau_n}|^{\frac{1}{f_{1,-s}}}
\le C_T\left(1+\sup_{u\in[0,t]}|U_{1,u}^n|
+\sup_{u\in[0,t]}|\widetilde Y_{2,u\wedge\tau_n}|^{\frac{f_2}{f_{1,-s}}}\right).
\]
Raising this bound to the power \(f_{1,-s}\) and using \(f_{1,-s}\le f_1\) gives
\begin{align}
\forall t\in[0,T],\qquad
\sup_{u\in[0,t]}|\widetilde X_{u\wedge\tau_n}|
&\le C_T\left(1+\sup_{u\in[0,t]}|U_{1,u}^n|^{f_{1,-s}}
+\sup_{u\in[0,t]}|\widetilde Y_{2,u\wedge\tau_n}|^{f_2}\right)\notag\\*
&\le C_T\left(1+\sup_{u\in[0,t]}|U_{1,u}^n|^{f_1}
+\sup_{u\in[0,t]}|\widetilde Y_{2,u\wedge\tau_n}|^{f_2}\right).
\label{eq:Bi_X_by_Xi_Y2}
\end{align}
\endgroup
Hence \eqref{eq:common_X_control} of
Lemma~\ref{lem:common_stopped_moment_criterion} holds, which proves martingality
on \([0,T]\). Since \(T>0\) is arbitrary,
\((S_t)_{t\ge0}\) is a martingale.
\end{proof}

\section{Strict local martingality: Proof of Theorem~\ref{nonmartingality}}
\label{subsec:loss_martingality}

We fix \(T>0\), a bounded stopping time \(\theta\), and a deterministic
\(\bar T<\infty\) such that \(\theta+T\le\bar T\) a.s.
We write
\((\widetilde X,\widetilde Y)=(\widetilde X^\theta,\widetilde Y^\theta)\)
for the solution of \eqref{tildePDV}.
Throughout this section, \(s\in\{-1,1\}\) is one possible sign in Assumption~\myref{V.1}, and \(\varepsilon_s\) is fixed by Assumption~\myref{I.5}.

\subsection{Outline of the proof and preliminary lemmas}

\begin{notation}
We set
\[
\forall t\in[0,+\infty),\qquad
\hypertarget{fig:explosion:def-Zbar}{\bar Z_t}
:=s(\widetilde Y_{1,t}-G_1(t)).
\]
\end{notation}
By Assumption~\myref{standing_ii}, \(\widetilde X\) and \(\widetilde Y\)
are continuous and adapted before explosion. Since \(G_1\) is continuous,
the same holds for \(\bar Z\). The first hitting and exit
times used below are therefore stopping times.
The proof is summarized in Figure~\ref{fig:explosion-proof-structure}.

\begin{figure}[H]
\centering
\resizebox{0.98\textwidth}{!}{%
\begin{tikzpicture}[
    phase/.style={
        draw,
        thick,
        rounded corners,
        align=center,
        text width=3.75cm,
        minimum height=1.55cm,
        inner sep=5pt,
        font=\footnotesize
    },
    phasebrown/.style={
        phase,
        very thick
    },
    phasedrift/.style={
        phase,
        very thick
    },
    tool/.style={
        draw,
        dashed,
        rounded corners,
        align=center,
        text width=4.15cm,
        minimum height=1.10cm,
        inner sep=4pt,
        font=\footnotesize
    },
    commontool/.style={
        tool,
        text width=7.35cm
    },
    arrow/.style={
        -{Stealth[length=2.7mm,width=2mm]},
        thick,
        shorten >=4pt,
        shorten <=4pt
    },
    usearrow/.style={
        -{Stealth[length=2.2mm,width=1.7mm]},
        dashed,
        thick,
        shorten >=3pt,
        shorten <=3pt
    }
]


\node[phase] (Start) at (0,0) {
\textbf{1. Initial conditions}\\[2pt]
\(\varepsilon_s\widetilde X_\theta>x^*\)\\[1pt]
\(s\widetilde Y_{1,\theta}>z_0\)
};

\node[phasebrown] (Reach) at (5.1,0) {
\textbf{2. Reaching level \(n_0\)}\\[2pt]
By Lemma~\ref{lem:reach_from_positive_X_linear} below,\\
\(\bar Z\) reaches level \(n_0\)\\
and \(\mathcal D_{n_0}\) occurs
};

\node[phasedrift] (Double) at (10.2,0) {
\textbf{3. From \(n\) to \(2n\)}\\[2pt]
From level \(n\) to level \(2n\)\\
within time \(h_n\):\\[1pt]
conditional probability bound\\[1pt]
\eqref{probdoubling} of Lemma~\ref{lem:dyadic_step_Y_linear} below
};
\node[phase] (Explode) at (15.3,0) {
\textbf{4. Explosion}\\[2pt]
Repeating the doubling step at levels \(2^k n_0\)\\
with\\[1pt]
\(\displaystyle\sum_k h_{2^k n_0}<\infty\)
};

\draw[arrow] (Start.east) -- (Reach.west);
\draw[arrow] (Reach.east) -- (Double.west);
\draw[arrow] (Double.east) -- (Explode.west);


\node[tool] (ToolStart) at (0,2.85) {
\(G_1,G_2\) continuous,\\
\(\bar Z,\widetilde Y_2,\widetilde X\) bounded on \([0,\theta]\)
};

\node[commontool] (ToolSemi) at (7.65,3.05) {
Used in the reaching and doubling steps:\\
semimartingale representation of \(\bar Z\), separating\\
Brownian noise, positive drift \(s\rho K_1(0)\Gamma_1\sigma\), and memory drift term\\[1pt]
Lemma~\ref{lem:stopped_SDE_shifted_Z} below
};

\node[tool] (ToolIterate) at (15.3,2.85) {
Time bound for each step\\[1pt]
\(\displaystyle h_n=c_0n^{1-f_{1,s}\chi}\)
};

\node[commontool] (ToolSafety) at (7.65,-2.75) {
Increment bounds and a bound on \(F_2(\widetilde Y_2)\) over short intervals:\\[1pt]
\eqref{eq:Y2_increment_bound} and \eqref{eq:Y2_absorption_bound}
};


\draw[usearrow] (ToolStart.south) -- (Start.north);

\draw[usearrow] (ToolSemi.south west) -- (Reach.north);
\draw[usearrow] (ToolSemi.south east) -- (Double.north);

\draw[usearrow] (ToolIterate.south) -- (Explode.north);

\draw[usearrow] (ToolSafety.north west) -- (Reach.south);
\draw[usearrow] (ToolSafety.north east) -- (Double.south);

\end{tikzpicture}
}
\caption{Structure of the explosion proof.}
\label{fig:explosion-proof-structure}
\end{figure}
We use the resolvent of \(K_1\) to obtain a semimartingale
representation of \(\bar Z\).
The bound in Lemma~\ref{lem:stopped_SDE_shifted_Z}\myref{stopped_SDE_shifted_Z_ii} below
controls the memory drift term in
\eqref{eq:resolvent_semimartingale_Zbar}. Following \cite{BondiPulido2024}, we estimate the memory drift term after stopping the solution. Under the kernel regularity in Assumption~\myref{IV.2}, \(\mathcal R_1\) denotes the resolvent of the first kind constructed in Appendix~\ref{app:proof-stopped-SDE-shifted-Z}, and
\begin{align}
r_{\rm mem}&:=K_1'*\mathcal R_1
\in BV_{\mathrm{loc}}(\mathbb R_+).
\label{eq:smooth-memory-kernel}
\end{align}
We define the signed measure \(dr_{\rm mem}\) by
\(dr_{\rm mem}((a,b]):=r_{\rm mem}(b)-r_{\rm mem}(a)\) for \(0\le a<b\),
with no atom at zero; \(|dr_{\rm mem}|\) denotes its total variation.

\begin{lemma}\label{lem:stopped_SDE_shifted_Z}
Suppose that Assumption~\myref{IV.2} holds.
For \(R\in\mathbb N^\ast\), we set
\[
\tau_R^{\widetilde X}:=\inf\{t\ge0:\ |\widetilde X_t|\ge R\}.
\]
We use \(\mathcal R_1\) from \eqref{eq:smooth-canonical-resolvent}
and \(r_{\rm mem}\) from \eqref{eq:smooth-memory-kernel}. Then the following statements hold:
\begin{itemize}

\myitem[\textup{(i)}]{stopped_SDE_shifted_Z_i}
The stopped process
\((\bar Z_{t\wedge\tau_R^{\widetilde X}})_{t\ge0}\) is a semimartingale and satisfies
\begin{align}
\forall t \in [0,\tau_R^{\widetilde X}], \qquad \bar Z_t
&=
K_1(0)
\left(
s
\int_0^t
\Gamma_1(u,\widetilde X_u)\,dW_u
+
s\rho
\int_0^t
\mathbf{1}_{\{u\ge\theta\}}
\Gamma_1(u,\widetilde X_u)\sigma(u,\widetilde X_u)\,du
\right)
\notag\\
&\quad
+
\int_0^t
(
r_{\rm mem}(0)\bar Z_u+(dr_{\rm mem}\ast \bar Z)(u)
)\,du .
\label{eq:resolvent_semimartingale_Zbar}
\end{align}
\myitem[\textup{(ii)}]{stopped_SDE_shifted_Z_ii}
\begin{align}
\forall t\in[0,\bar T\wedge\tau_R^{\widetilde X}],\qquad
|r_{\rm mem}(0)\bar Z_t+(dr_{\rm mem}*\bar Z)(t)|
&\le(|r_{\rm mem}(0)|+|dr_{\rm mem}|([0,\bar T]))
\sup_{u\in[0,t]}|\bar Z_u|.
\label{eq:memory_pointwise_bound}
\end{align}

\end{itemize}
\end{lemma}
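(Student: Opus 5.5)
Under Assumption~\myref{IV.2}, $K_1(0)>0$ and $K_1\in C^1$, so a resolvent of the first kind has the form $\mathcal R_1=K_1(0)^{-1}\delta_0+\ell\,dt$. Its density $\ell$ is built from the resolvent of the second kind of the normalized kernel $K_1'/K_1(0)$. Concretely, I differentiate $K_1*\mathcal R_1=1$: writing $\mathcal R_1=a\delta_0+\ell\,dt$, the identity gives $aK_1(t)+\int_0^tK_1(t-u)\ell(u)\,du=1$. Evaluating at $t=0$ gives $a=1/K_1(0)$, and differentiating gives
\[
aK_1'(t)+K_1(0)\ell(t)+(K_1'*\ell)(t)=0.
\]
This is a linear Volterra equation of the second kind with locally bounded (indeed $BV_{\mathrm{loc}}$) kernel $K_1'$, hence it has a unique locally bounded solution $\ell$, which is itself $BV_{\mathrm{loc}}$. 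Then $r_{\rm mem}=K_1'*\mathcal R_1=aK_1'+K_1'*\ell$ is $BV_{\mathrm{loc}}$. Moreover, by the differentiated identity, $r_{\rm mem}=-K_1(0)\ell$.

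For part (i), I work on $[0,\tau_R^{\widetilde X}]$, where all integrands are bounded. I write $\bar Z=K_1*\mu$, where $\mu(du)=s\Gamma_1\,dW_u+s\rho\mathbf 1_{\{u\ge\theta\}}\Gamma_1\sigma\,du$ is a stopped semimartingale increment. Convolving with $\mathcal R_1$ and using stochastic Fubini (justified by boundedness) gives $\mathcal R_1*\bar Z=(\mathcal R_1*K_1)*\mu=M_t:=\int_0^t d\mu$, i.e.
\[
a\bar Z_t+(\ell*\bar Z)(t)=M_t.
\]
Alternatively, and more directly, I apply the stochastic Fubini theorem to $K_1(t-u)=K_1(0)+\int_u^tK_1'(v-u)\,dv$. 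This yields
\[
\bar Z_t=K_1(0)M_t+\int_0^t\int_0^vK_1'(v-u)\,\mu(du)\,dv .
\]
The inner integral is $(K_1'*\mu)(v)=(K_1'*\mathcal R_1*K_1*\mu)(v)=(r_{\rm mem}*\bar Z)'$-type expression. More precisely, $K_1'*\mu=r_{\rm mem}*\,d\bar Z$ in the sense of $\mathcal R_1*\bar Z=M$. The clean route is to note that $K_1'*\mu=K_1'*(\mathcal R_1*\bar Z)'$ with $M'=\mu$, and to integrate by parts in the Stieltjes sense with $r_{\rm mem}\in BV$. Since $(r_{\rm mem}*d\bar Z)(v)=r_{\rm mem}(0)\bar Z_v+(dr_{\rm mem}*\bar Z)(v)$ (because $\bar Z_0=0$), this gives \eqref{eq:resolvent_semimartingale_Zbar}. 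The semimartingale property follows, since the first term is an Itô process and the last is absolutely continuous. The main obstacle is making this identification rigorous, because $\bar Z$ is only continuous, not differentiable. I would avoid derivatives of $\bar Z$ and verify the identity
\[
\int_0^v K_1'(v-u)\,\mu(du)=r_{\rm mem}(0)\bar Z_v+\int_{(0,v]}\bar Z_{v-w}\,dr_{\rm mem}(w)
\]
directly. The right side is expanded with $\bar Z=K_1*\mu$ and stochastic Fubini, which reduces the claim to the deterministic kernel identity
\[
r_{\rm mem}(0)K_1(x)+\int_{(0,x]}K_1(x-w)\,dr_{\rm mem}(w)=K_1'(x).
\]
That identity is $d(r_{\rm mem}*K_1)/dx=K_1'$, which holds because $r_{\rm mem}*K_1=K_1'*\mathcal R_1*K_1=K_1'*1=K_1-K_1(0)$. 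This deterministic identity is checked by Fubini and the product rule for Stieltjes convolutions.

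Part (ii) is immediate from this representation. The quantity $r_{\rm mem}(0)\bar Z_t+\int_{(0,t]}\bar Z_{t-w}\,dr_{\rm mem}(w)$ is bounded in absolute value by
\[
|r_{\rm mem}(0)|\,|\bar Z_t|+|dr_{\rm mem}|((0,t])\sup_{u\le t}|\bar Z_u|,
\]
and $t\le\bar T$ gives \eqref{eq:memory_pointwise_bound}.
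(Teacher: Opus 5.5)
Your rigorous route is the paper's proof. It uses the same resolvent construction by differentiating $K_1*\mathcal R_1=1$, stochastic Fubini on $K_1(t-u)=K_1(0)+\int_u^tK_1'(v-u)\,dv$, and the reduction of $K_1'*\mu=r_{\rm mem}(0)\bar Z+dr_{\rm mem}*\bar Z$ to the kernel identity $K_1'=r_{\rm mem}(0)K_1+dr_{\rm mem}*K_1$, obtained by differentiating $r_{\rm mem}*K_1=K_1-K_1(0)$; the paper justifies the interchange with $dr_{\rm mem}$ by \cite[Lemma~2.1]{AbiJaberLarssonPulido2019} rather than a generic stochastic Fubini theorem. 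Your heuristic aside $(K_1'*\mu)(v)=(K_1'*\mathcal R_1*K_1*\mu)(v)$ is off by a derivative: since $\mathcal R_1*K_1=1$, the right side equals $r_{\rm mem}*\bar Z=\int_0^\cdot(K_1'*\mu)(u)\,du$. Nothing in your final argument depends on that aside.
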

The proof is given in \hyperref[app:proof-stopped-SDE-shifted-Z]{Appendix~\ref*{app:proof-stopped-SDE-shifted-Z}}.
\begin{lemma}\label{lem:accessibility_tube}
Let \(Z\) be a continuous adapted process,
\(\tau\) a stopping time, and \(a,b\) predictable processes satisfying
\[
\forall s\in[0,+\infty),\qquad
Z_{s\wedge\tau}-Z_0
=\int_0^{s\wedge\tau}b_u\,du
+\int_0^{s\wedge\tau}a_u\,dW_u.
\]
We assume that there are constants \(h>0\) and \(0<c\le C<\infty\)
such that \(\tau\le h\) almost surely and
\[
\PP_0(
\forall s\in[0,\tau),\quad
|b_s|\le C,\quad c\le|a_s|\le C
)=1.
\]
Then, for every \(\delta>0\) and every \(p\in C^1([0,h])\)
with \(p(0)=0\), we have
\begin{align}
\PP_0\left(
\sup_{s\in[0,\tau]}|Z_s-Z_0-p(s)|<\delta
\right)&>0.
\label{eq:stopped_accessibility_tube}
\end{align}
\end{lemma}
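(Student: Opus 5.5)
The idea is a support-theorem argument via Girsanov and time change. We reduce to a process whose drift is the smooth function $p'$ and whose diffusion coefficient is bounded above and below, and then show that this process stays in the tube with positive probability.

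\emph{Step 1 (removing the target path and the drift).} Set $D_s:=Z_{s\wedge\tau}-Z_0-p(s\wedge\tau)$. Then
\[
D_s=\int_0^{s\wedge\tau}(b_u-p'(u))\,du+\int_0^{s\wedge\tau}a_u\,dW_u .
\]
Since $p\in C^1([0,h])$, $|b-p'|\le C+\|p'\|_\infty=:C'$ on $[0,\tau)$. Define $\theta_u:=-(b_u-p'(u))/a_u\,\mathbf 1_{\{u<\tau\}}$, which is bounded by $C'/c$. Novikov's criterion makes $\mathcal E(\int\theta\,dW)_h$ a true martingale, so it defines $\mathbb Q\sim\PP_0$ on $\mathcal F_h$. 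Under $\mathbb Q$, $D_s=\int_0^{s\wedge\tau}a_u\,d\widehat W_u$ is a continuous local martingale with $\widehat W$ a $\mathbb Q$-Brownian motion. Because $\mathbb Q$ and $\PP_0$ are equivalent, it suffices to show $\mathbb Q(\sup_{s\le\tau}|D_s|<\delta)>0$. This gives \eqref{eq:stopped_accessibility_tube}, since $\tau\le h$ and $D_s=Z_s-Z_0-p(s)$ for $s\le\tau$.

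\emph{Step 2 (small-ball estimate for the martingale).} Write $M:=D$. Its quadratic variation satisfies $\langle M\rangle_\tau\le C^2\tau\le C^2h$ a.s. By Dambis--Dubins--Schwarz, after enlarging the space if necessary, $M_s=\beta_{\langle M\rangle_s}$ for a $\mathbb Q$-Brownian motion $\beta$. Hence
\[
\Big\{\sup_{r\le C^2h}|\beta_r|<\delta\Big\}\subseteq\Big\{\sup_{s\le\tau}|M_s|<\delta\Big\}.
\]
The left event has strictly positive probability, by the explicit small-ball probability of Brownian motion or simply by the support of Wiener measure. This concludes the argument.

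This route uses only the upper bound $|a|\le C$ and the lower bound $|a|\ge c$; the latter is needed only to make the Girsanov kernel $\theta$ bounded. The predictability of $a,b$ makes the stochastic integrals and $\theta$ well defined.

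\emph{Main obstacle.} The only delicate point is the bookkeeping on $[0,\tau)$ versus $[0,\tau]$. The bounds hold only on $[0,\tau)$, and $\theta$ must be set to zero after $\tau$; since $\{\tau\}$ is Lebesgue-null, neither the $du$-integral nor the $dW$-integral is affected. One must also check that Novikov applies on the deterministic horizon $h$, which is why we use $\tau\le h$. If DDS on an enlarged space seemed awkward, I would instead apply Doob's inequality to $M$ under $\mathbb Q$. That yields only $\mathbb Q(\sup|M|\ge\delta)\le C^2h/\delta^2$, which is not enough by itself when $h$ is large. In that case I would split $[0,h]$ into short pieces, use the Markov-type conditional small-ball bound on each piece, and multiply. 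The DDS route avoids this, so it is my first choice.
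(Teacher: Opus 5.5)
Your proposal is correct and follows the paper's proof: the same bounded Girsanov kernel $(p'-b)/a$ (set to zero after $\tau$) makes $Z_{\cdot\wedge\tau}-Z_0-p(\cdot\wedge\tau)$ a $\mathbb Q$-martingale with quadratic variation at most $C^2h$. As in the paper, Dambis--Dubins--Schwarz reduces the claim to the positive small-ball probability $\mathbb Q\bigl(\sup_{v\le C^2h}|B_v|<\delta\bigr)$, and equivalence of $\mathbb Q$ and $\PP_0$ concludes.
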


\begin{proof}
We define the progressively measurable process \(u\) by
\(u_s:=\frac{p'(s)-b_s}{a_s}\) for \(s<\tau\) and \(u_s:=0\) afterwards.
It is bounded. Girsanov's theorem therefore gives a probability measure
\(\mathbb Q\) equivalent to \(\PP_0\), under which
\(Z_{s\wedge\tau}-Z_0-p(s\wedge\tau)\) is a continuous martingale
with quadratic variation bounded by \(C^2h\).
By the Dambis--Dubins--Schwarz theorem, there is a Brownian motion
\(B\) under \(\mathbb Q\), with an independent extension if necessary, such that
\[
\mathbb Q\left(
\sup_{s\in[0,\tau]}|Z_s-Z_0-p(s)|<\delta
\right)
\ge\mathbb Q\left(
\sup_{v\in[0,C^2h]}|B_v|<\delta
\right)>0.
\]
Equivalence of the measures gives \eqref{eq:stopped_accessibility_tube} of Lemma~\ref{lem:accessibility_tube}.
\end{proof}
The same conclusion holds conditionally after a bounded stopping time \(\theta\), with strictly positive conditional probability and \(\mathcal F_\theta\)-measurable parameters. The proof uses the shifted Brownian increments and localization of the bounds on \(h\), \(C\), \(c^{-1}\), and \(\|p'\|_\infty\) in the change of measure.

\subsection{Proof of explosion}

We work conditionally on \(\mathcal F_\theta\).
 Under Assumption~\ref{assumption1}, for
\(u,v\in[0,\bar T]\) with \(u\le v\), before explosion,
the polynomial growth bound on \(\Gamma_2\) in Assumption~\myref{I.5} and the increment bound on \(K_2\) in \eqref{eq:K2_increment_regular_assumption} of Assumption~\ref{assumption1} give
\begin{align}
|\widetilde Y_{2,v}-\widetilde Y_{2,u}|
&\le |G_2(v)-G_2(u)|\notag\\
&\quad+\int_0^u|K_2(v-r)-K_2(u-r)|
|\Gamma_2(r,\widetilde X_{r})|\,dr\notag\\
&\quad+\int_u^v K_2(v-r)|\Gamma_2(r,\widetilde X_{r})|\,dr\notag\\
&\le |G_2(v)-G_2(u)|
+C_{\bar T}\left(1+\sup_{r\in[0,v]}|\widetilde X_r|^{\gamma_2}\right)
|v-u|^{\delta_2}.
\label{eq:Y2_increment_bound}
\end{align}
For \(\theta\le u\le t\le\theta+T\),
the increment bound \eqref{eq:Y2_increment_bound} gives
\[
|\widetilde Y_{2,u}|
\le |\widetilde Y_{2,\theta}|+2\|G_2\|_{\infty,[0,\bar T]}
+C(t-\theta)^{\delta_2}
\left(1+\sup_{r\in[0,t]}|\widetilde X_r|^{\gamma_2}\right).
\]
Since \(\gamma_2f_2\le1\), the polynomial growth bound on \(F_2\) in Assumption~\myref{I.5} gives
{
\begin{align*}
\sup_{u\in[\theta,t]}|F_2(\widetilde Y_{2,u})|
&\le B+C(t-\theta)^{\delta_2f_2}
\left(1+\sup_{r\in[0,t]}|\widetilde X_r|^{\gamma_2f_2}\right)\\
&\le B+2CT^{\delta_2f_2}
+C(t-\theta)^{\delta_2f_2}
\sup_{r\in[0,t]}|\widetilde X_r|.
\end{align*}
Here \(B<\infty\) depends only on the history up to \(\theta\) and the
fixed data, while \(C\) is independent of that history and of \(t\).
We may increase \(B\) below.
After increasing \(B\) to absorb \(2CT^{\delta_2f_2}\), we obtain
}
\begin{align}
\sup_{u\in[\theta,t]}|F_2(\widetilde Y_{2,u})|
&\le B+C(t-\theta)^{\delta_2f_2}
\sup_{r\in[0,t]}|\widetilde X_r|.
\label{eq:Y2_absorption_bound}
\end{align}

\begin{notation}
We set \(\vartheta_n:=\inf\{t\ge\theta:\bar Z_t\ge n\}\). We also define the event at level \(n\)
{\begin{align}
\hypertarget{fig:explosion:def-Dn}{\mathcal D_n}
&:=
\{
\vartheta_n<+\infty
\}
\cap
\left\{
\inf_{u\in[\theta,\vartheta_n]}s\widetilde Y_{1,u}>z_0
\right\}.
\label{eq:dyadic_admissible_event}
\end{align}}
\end{notation}

\begin{lemma}\label{lem:reach_from_positive_X_linear}
Suppose that the assumptions of Theorem~\ref{nonmartingality} hold.
For every \(T_*\in(0,T]\) and every integer \(n\ge1\),
we have
\[
\PP_0\left(
\mathcal D_n\cap\left\{\vartheta_n\le\theta+{\frac{T_*}{2}}\right\}
\,\middle|\,\mathcal F_\theta
\right)>0
\quad\text{a.s.\ on}\quad
{\left\{n>1+\sup_{u\in[0,\theta]}|\bar Z_u|\right\}
\cap\{\varepsilon_s X_\theta>x^*,
sY_{1,\theta}>z_0\}}.
\]
\end{lemma}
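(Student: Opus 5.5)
We plan to prove the lemma by a support-type argument built on the semimartingale representation of Lemma~\ref{lem:stopped_SDE_shifted_Z}\myref{stopped_SDE_shifted_Z_i} and the tube estimate of Lemma~\ref{lem:accessibility_tube}, applied conditionally on \(\mathcal F_\theta\) as in the remark after that lemma. Fix \(\omega\) in the event \(\{n>1+\sup_{u\le\theta}|\bar Z_u|\}\cap\{\varepsilon_sX_\theta>x^*,\ sY_{1,\theta}>z_0\}\). On \([0,\theta]\) we have \((\widetilde X,\widetilde Y)=(X,Y)\), so \(\bar Z_\theta<n-1\), \(s\widetilde Y_{1,\theta}>z_0\) and \(\varepsilon_s\widetilde X_\theta>x^*\). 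We choose a large \(R\) (an \(\mathcal F_\theta\)-measurable integer exceeding \(2\sup_{u\le\theta}|\widetilde X_u|\) plus the value of \(|\widetilde X|\) that can occur while \(\bar Z\le n+1\) on a short interval) and we work up to \(\tau:=\tau_R^{\widetilde X}\wedge\sigma\wedge(\theta+h)\). Here \(\sigma\) is the first exit after \(\theta\) of \((\widetilde X,\bar Z)\) from a region where \(\varepsilon_s\widetilde X>x^*\) and \(s\widetilde Y_{1}>z_0\), and \(h\le T_*/2\) is small.

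\textbf{Step 1: bounded coefficients.} On \([\theta,\tau)\), \eqref{eq:resolvent_semimartingale_Zbar} writes \(d\bar Z=b\,du+a\,dW\). Here \(a=sK_1(0)\Gamma_1(u,\widetilde X_u)\) and \(b=s\rho K_1(0)\Gamma_1\sigma+r_{\rm mem}(0)\bar Z+dr_{\rm mem}*\bar Z\). By \eqref{eq:memory_pointwise_bound} and \(|\widetilde X|\le R\), \(b\) is bounded by an \(\mathcal F_\theta\)-measurable constant. The delicate point is the lower bound \(|a|\ge c>0\): we need \(\Gamma_1\) bounded away from zero. In the region \(\varepsilon_s x>x^*\), \eqref{eq:explosive_drift_growth} gives \(s\rho\Gamma_1\sigma\ge c_T(x^*)^\chi>0\) when \(x^*>0\). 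By continuity this gives \(\Gamma_1\neq0\), hence \(|\Gamma_1|\ge c\) on a compact set of \((t,x)\) with \(\varepsilon_sx\in[x^*+\eta,R]\). We therefore shrink the region defining \(\sigma\) to \(\varepsilon_s\widetilde X>x^*+\eta\), with \(\eta>0\) small so that the starting point lies inside it. The case \(x^*=0\) needs the strict inequality \(\varepsilon_sX_\theta>x^*\) together with this \(\eta\)-shrinking.

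\textbf{Step 2: target path.} We take the linear path \(p(u)=(n+1-\bar Z_\theta)(u-\theta)/h\) and a small \(\delta\). On the tube event \(\sup|\bar Z-\bar Z_\theta-p|<\delta\), \(\bar Z\) increases roughly linearly from \(\bar Z_\theta\) to \(n+1\), so \(s\widetilde Y_1=sG_1+\bar Z\) stays above \(s\widetilde Y_{1,\theta}-\delta-\omega_{G_1}(h)>z_0\), using continuity of \(G_1\). We must also ensure that \(\varepsilon_s\widetilde X\) stays above \(x^*+\eta\). Write \(\varepsilon_s\widetilde X=\varepsilon_s\beta+\varepsilon_sF_1(\widetilde Y_1)+\varepsilon_sF_2(\widetilde Y_2)\). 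Since \(s\widetilde Y_1\) stays above roughly its initial value minus \(\delta\) and \(\varepsilon_sF_1(s\cdot)\) is nondecreasing on \([z_0,\infty)\) by \eqref{eq:power_branch_F1}, the \(F_1\) term cannot drop by more than the modulus of continuity of \(F_1\) at scale \(\delta+\omega_{G_1}(h)\). The \(\beta\) and \(F_2\) terms move little on \([\theta,\theta+h]\) by continuity of \(\beta\), \eqref{eq:Y2_increment_bound} and \(|\widetilde X|\le R\). Hence for \(h,\delta\) small, \(\sigma\) is not reached before \(\bar Z\) hits \(n\). The bound on \(|\widetilde X|\) also keeps \(\tau_R^{\widetilde X}\) from being reached, once \(R\) is chosen via the upper bound on \(F_1\) over \(s\widetilde Y_1\le n+2+\|G_1\|\) and \eqref{eq:Y2_absorption_bound}.

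\textbf{Step 3: conclusion.} Lemma~\ref{lem:accessibility_tube}, in its conditional form, gives positive conditional probability of the tube event. On that event \(\tau\) is not reached before \(\bar Z\) crosses \(n\), so \(\vartheta_n\le\theta+h\le\theta+T_*/2\) and \(\mathcal D_n\) holds.

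The main obstacle is the circularity in Step 2: the bounds on \(\widetilde X\) and \(\widetilde Y_2\) depend on the stopping at \(R\), while \(R\) must be fixed in advance. We resolve this by choosing \(R\) from the a priori range of \(F_1\) on the bounded set of \(\widetilde Y_1\) values allowed by the tube, absorbing the \(F_2\) contribution through \eqref{eq:Y2_absorption_bound} with \(h\) small so that \(Ch^{\delta_2f_2}\le\frac14\). The nondegeneracy of \(\Gamma_1\) in Step 1 is the other point needing care.
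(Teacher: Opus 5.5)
Your proposal is correct and follows essentially the same route as the paper. Both arguments apply the conditional tube estimate of Lemma~\ref{lem:accessibility_tube} to $\bar Z$ with a linear target path. Both obtain nondegeneracy of $\Gamma_1$ from \eqref{eq:explosive_drift_growth} at a threshold strictly above $x^*$. Both use monotonicity of $\varepsilon_sF_1(s\cdot)$ on $[z_0,\infty)$ together with \eqref{eq:Y2_increment_bound} to keep $\varepsilon_s\widetilde X$ and $s\widetilde Y_1$ in the admissible region.

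The only difference is bookkeeping. The paper stops when $\bar Z$ leaves $(\bar Z_\theta-2\delta,n)$ or when $\widetilde Y_2$ leaves a $\delta$-neighbourhood of $Y_{2,\theta}$. This bounds $\widetilde X$ directly by a constant independent of $h$, so the paper does not need your ordering in which $R$ (through $\tau_R^{\widetilde X}$) is fixed before $h$.
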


\begingroup

\begin{proof}
We write
\((\widetilde X,\widetilde Y)
=(\widetilde X^\theta,\widetilde Y^\theta)\)
and recall that
\((\widetilde X_\theta,\widetilde Y_\theta)
=(X_\theta,Y_\theta)\).
We work conditionally on \(\mathcal F_\theta\), on
\(\left\{n>1+\sup_{u\in[0,\theta]}|\bar Z_u|\right\}
\cap\{\varepsilon_s X_\theta>x^*,
sY_{1,\theta}>z_0\}\).
We use Lemma~\ref{lem:accessibility_tube} to make \(\bar Z\)
reach \(n\) while keeping \(\varepsilon_s\widetilde X>x^*\)
and \(s\widetilde Y_1>z_0\).
Continuity of \(F_1,F_2\) gives \(\delta\in(0,\frac{1}{2}]\) such that
\(sY_{1,\theta}-3\delta>z_0\) and, for
\(w\in[Y_{2,\theta}-\delta,
Y_{2,\theta}+\delta]\),
\[
\varepsilon_s\beta(\theta)-\delta
+\varepsilon_s F_1(Y_{1,\theta}-3s\delta)
+\varepsilon_s F_2(w)
\ge\frac{\varepsilon_s X_\theta+x^*}{2}.
\]
For \(h\in(0,\frac{T_*}{2}]\), we set
\[
\tau=h\wedge\inf\{r\ge0:
\bar Z_{\theta+r}\notin(\bar Z_\theta-2\delta,n)
\ \text{or}\quad
|\widetilde Y_{2,\theta+r}-Y_{2,\theta}|\ge\delta
\}.
\]
Since \(n>1+\sup_{u\in[0,\theta]}|\bar Z_u|\) and
\(\delta\le\frac{1}{2}\), the definition of \(\tau\) gives
\[
\sup_{u\in[0,\theta+\tau]}|\bar Z_u|\le n,
\qquad
\sup_{u\in[0,\theta+\tau]}|\widetilde Y_{2,u}|
<1+\sup_{u\in[0,\theta]}|\widetilde Y_{2,u}|.
\]
Since \(\beta,G_1\) are bounded on \([0,\bar T]\) and \(F_1,F_2\)
are continuous, we have \(\sup_{u\in[0,\theta+\tau]}|\widetilde X_u|\le C\),
where \(C\) does not depend on \(h\).
Indeed, \(\widetilde Y_1=G_1+s\bar Z\) and the bounds on \(\bar Z\) and \(\widetilde Y_2\) give
\begin{align*}
\sup_{u\in[0,\theta+\tau]}|\widetilde X_u|
&\le\|\beta\|_{\infty,[0,\bar T]}
+\max_{|y|\le n+\|G_1\|_{\infty,[0,\bar T]}}|F_1(y)|+\max_{|w|\le1+\sup_{v\in[0,\theta]}|\widetilde Y_{2,v}|}|F_2(w)|.
\end{align*}
We choose \(h\) small enough that
\[
\sup_{r\in[0,h]}
(|\beta(\theta+r)-\beta(\theta)|
\vee|G_1(\theta+r)-G_1(\theta)|)\le\delta
\]
and
\[
\sup_{r\in[0,h]}|G_2(\theta+r)-G_2(\theta)|
+C_{\bar T}(1+C^{\gamma_2})h^{\delta_2}<\delta.
\]
The increment bound on \(\widetilde Y_2\) in \eqref{eq:Y2_increment_bound} gives
\[
\forall r\in[0,\tau],\qquad
|\widetilde Y_{2,\theta+r}-Y_{2,\theta}|
\le |G_2(\theta+r)-G_2(\theta)|
+C_{\bar T}(1+C^{\gamma_2}){r}^{\delta_2}<\delta.
\]
Thus, before \(h\), only \(\bar Z\) can cause the stop at \(\tau\).
By the definition of \(\tau\), the increment bound on \(G_1\), and
monotonicity of \(z\mapsto\varepsilon_s F_1(sz)\) on \([z_0,+\infty)\), we have
\begin{align}
\forall r\in[0,\tau],\qquad
s\widetilde Y_{1,\theta+r}
&=sY_{1,\theta}
+\bar Z_{\theta+r}-\bar Z_\theta
+s\bigl(G_1(\theta+r)-G_1(\theta)\bigr)\notag\\
&\ge{s}Y_{1,\theta}-2\delta-\delta
=sY_{1,\theta}-3\delta>z_0,\notag\\
\varepsilon_s\widetilde X_{\theta+r}
&{}\ge\varepsilon_s\beta(\theta)-\delta
+\varepsilon_s F_1(Y_{1,\theta}-3s\delta)
+\varepsilon_s F_2(\widetilde Y_{2,\theta+r})\notag\\
&\ge\frac{\varepsilon_s X_\theta+x^*}{2}.
\label{eq:strip_condition_linear}
\end{align}
Since \(\Gamma_1,\sigma\) are bounded on
\([0,\bar T]\times[-C,C]\),
we may increase \(C\) so that
\(|\sigma(\theta+r,\widetilde X_{\theta+r})|\le C\) for \(r\in[0,\tau]\).
The drift lower bound \eqref{eq:explosive_drift_growth} of Assumption~\myref{V.1} and the lower bound on
\(\varepsilon_s\widetilde X\) in \eqref{eq:strip_condition_linear} give
\begin{align*}
\forall r\in[0,\tau],\qquad
C|\Gamma_1(\theta+r,\widetilde X_{\theta+r})|
&\ge{s\rho}\Gamma_1(\theta+r,\widetilde X_{\theta+r})
\sigma(\theta+r,\widetilde X_{\theta+r}) \ge c_{\bar T}
\left(\frac{\varepsilon_s X_\theta+x^*}{2}\right)^\chi>0.
\end{align*}
Since \(K_1(0)>0\), the last lower bound and boundedness of
\(\Gamma_1\) give constants \(0<c_*\le C_*<\infty\),
independent of \(h\), such that
\begin{align}
\forall r\in[0,\tau],\qquad
c_*&\le K_1(0)|\Gamma_1(\theta+r,\widetilde X_{\theta+r})|\le C_*.
\label{eq:reach_diffusion_bounds}
\end{align}
The identity in \eqref{eq:resolvent_semimartingale_Zbar} and
the bound in \eqref{eq:memory_pointwise_bound} of
Lemma~\ref{lem:stopped_SDE_shifted_Z} also bound the drift of
\(\bar Z_{\theta+\cdot}\) on \([0,\tau]\).
Lemma~\ref{lem:accessibility_tube}, with
\[
\forall r\in[0,h],\qquad
p(r)=\frac{r}{h}(n-\bar Z_\theta+2\delta),
\]
gives
\[
\PP_0\left(
\sup_{r\in[0,\tau]}|\bar Z_{\theta+r}-\bar Z_\theta-p(r)|<\delta
\,\middle|\,\mathcal F_\theta\right)>0.
\]
Here we have \(p\ge0\) and \(\bar Z_\theta+p(h)=n+2\delta\).
On
\[
\left\{
\sup_{r\in[0,\tau]}
|\bar Z_{\theta+r}-\bar Z_\theta-p(r)|<\delta
\right\},
\]
we have
\[
\forall r\in[0,\tau],\qquad
\bar Z_{\theta+r}>\bar Z_\theta+p(r)-\delta
\ge\bar Z_\theta-\delta>\bar Z_\theta-2\delta.
\]
Still on this event, \(\vartheta_n>\theta+\tau\) would imply \(\tau=h\) and
\(\bar Z_{\theta+h}>\bar Z_\theta+p(h)-\delta=n+\delta>n\).
This contradicts the definition of \(\vartheta_n\).
Therefore, we have \(\vartheta_n=\theta+\tau\le\theta+\frac{T_*}{2}\).
The bound on \(s\widetilde Y_1\) in \eqref{eq:strip_condition_linear} gives \(\mathcal D_n\).
\end{proof}
\endgroup

\medskip

For a constant \(c_0>0\), chosen in
Lemma~\ref{lem:dyadic_step_Y_linear} below, we set
\begin{align}
\hypertarget{fig:explosion:def-tn}{h_n}
&:=\hypertarget{fig:explosion:def-c0}{c_0}n^{1-f_{1,s}\chi},
\qquad \tau_n^{\rm step}
:=h_n\wedge\inf\left\{
 r\ge0:
 \bar Z_{\vartheta_n+r}\notin(\frac{n}{2},2n)
\right\}.
\label{eq:def-dyadic-time}
\end{align}
We set \(\tau_n^{\rm step}:=0\) on \(\{\vartheta_n=\infty\}\).
Events involving \((\widetilde X,\widetilde Y)\) at \(\vartheta_n\) include
the condition \(\vartheta_n<\infty\).

The times used as \(\bar Z\) moves from each level \(n\) to \(2n\)
are listed in Table~\ref{tab:explosion-stopping-times}.
\begin{table}[H]
\centering
\caption{Times used in the explosion proof.
}
\label{tab:explosion-stopping-times}

\scriptsize
\renewcommand{\arraystretch}{1.25}
\setlength{\tabcolsep}{2pt}

\begin{tabularx}{\linewidth}{
|>{\hsize=0.640000\hsize\linewidth=\hsize\RaggedRight\arraybackslash}X
|>{\hsize=0.480000\hsize\linewidth=\hsize\RaggedRight\arraybackslash}X
|>{\hsize=0.640000\hsize\linewidth=\hsize\RaggedRight\arraybackslash}X
|>{\hsize=2.240000\hsize\linewidth=\hsize\RaggedRight\arraybackslash}X|}
\hline
\textbf{Symbol} & \textbf{Process} & \textbf{Stopping condition} & \textbf{Role} \\
\hline

\(\tau_R^{\widetilde X}\)
& \(|\widetilde X|\)
& \(R\)
& Stops the process for the semimartingale estimates. \\
\hline\(\vartheta_n\)
& \(\bar Z\)
& Level \(n\)
& Time when \(\bar Z\) reaches \(n\). \\
\hline\(\tau_n^{\rm step}\)
& \(\bar Z\)
& \(2n,\frac{n}{2},h_n\)
& Time after \(\vartheta_n\) until \(\bar Z\) reaches \(\frac{n}{2}\) or \(2n\), or \(h_n\) if neither level is reached earlier. \\
\hline
\end{tabularx}
\end{table}

\begin{lemma}\label{lem:dyadic_step_Y_linear}
Suppose that the assumptions of Theorem~\ref{nonmartingality} hold.
There exist deterministic constants \(T_*\in(0,T]\) and \(c_0,C>0\) such that,
for some a.s.\ finite \(\mathcal F_\theta\)-measurable integer
\(N_\theta\) and every integer \(n\ge1\),
\begin{align}
\PP_0\left(
\mathcal D_{2n}\cap
\{\vartheta_{2n}\le\vartheta_n+h_n\}
\,\middle|\,\mathcal F_{\vartheta_n}
\right)
&\ge1-Ch_n
\label{probdoubling}
\end{align}
a.s.\ on
\({\{n\ge N_\theta\}\cap}
\mathcal D_n\cap\{\vartheta_n+h_n\le\theta+T_*\}\),
where \(\mathcal D_n\) and \(h_n\) are defined in \eqref{eq:dyadic_admissible_event} and \eqref{eq:def-dyadic-time}, respectively.
\end{lemma}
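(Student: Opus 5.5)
We plan to work on the event \(\{n\ge N_\theta\}\cap\mathcal D_n\cap\{\vartheta_n+h_n\le\theta+T_*\}\), conditionally on \(\mathcal F_{\vartheta_n}\), and to analyse \(\bar Z_{\vartheta_n+r}\) for \(r\in[0,\tau_n^{\rm step}]\) through the semimartingale representation \eqref{eq:resolvent_semimartingale_Zbar} of Lemma~\ref{lem:stopped_SDE_shifted_Z}. Since \(\vartheta_n\ge\theta\), the indicator \(\mathbf 1_{\{u\ge\theta\}}\) is one on this interval. We therefore write
\[
\bar Z_{\vartheta_n+r}-n=M_r+D_r+E_r,
\]
where \(M\) is the Brownian part \(K_1(0)s\int\Gamma_1\,dW\), \(D\) is the Girsanov drift \(K_1(0)\int s\rho\Gamma_1\sigma\,du\), and \(E\) is the memory drift. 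The preliminary step is to control \(\widetilde X\) on the strip \(\bar Z\in(\frac n2,2n)\). Choose \(T_*\) so small that \(CT_*^{\delta_2f_2}\le\frac14\) in \eqref{eq:Y2_absorption_bound}. For \(n\ge N_\theta\), where \(N_\theta\) is large enough that \(n\) dominates \(\sup_{[0,\theta]}|\bar Z|\), \(\|G_1\|_\infty\), \(z_0\) and \(B\), the discussion after Figure~\ref{fig:feedbackloop} then gives the two-sided bound
\begin{align}
c\,n^{f_{1,s}}\le\varepsilon_s\widetilde X_u\le C\,n^{f_{1,s}},
\qquad
\sup_{[0,u]}|\widetilde X|\le C\,n^{f_{1,s}} .
\label{eq:dyadic_strip_bounds_linear}
\end{align}
This holds for \(u\in[\vartheta_n,\vartheta_n+\tau_n^{\rm step}]\), and it uses that \(s\widetilde Y_1>z_0\) there because \(\bar Z>\frac n2\). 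The sup bound over \([0,u]\) needs the absorption argument: the sup of \(|\widetilde X|\) before \(\theta\) is part of \(B\), and before \(\vartheta_n\) it is controlled since \(\bar Z\le n\) there. The monotonicity \eqref{eq:power_branch_F1} then keeps \(\varepsilon_s\widetilde X\ge x^*\).

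Given \eqref{eq:dyadic_strip_bounds_linear}, the drift estimates follow. By \eqref{eq:explosive_drift_growth}, the drift satisfies
\[
D_r\ge K_1(0)c_T c^\chi n^{f_{1,s}\chi}\,r .
\]
With \(h_n=c_0n^{1-f_{1,s}\chi}\), this gives \(D_{h_n}\ge 3n\) once \(c_0\) is chosen large relative to \(K_1(0)c_Tc^\chi\). This choice of \(c_0\) should be made first. By \eqref{eq:memory_pointwise_bound}, the memory term satisfies \(|E_r|\le C_{\rm mem}\cdot 2n\,r\le 2C_{\rm mem}c_0n^{2-f_{1,s}\chi}\), which is \(o(n)\) because \(f_{1,s}\chi>1\). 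It is therefore at most \(\frac n4\) for \(n\ge N_\theta\).

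The main obstacle is the martingale part, since \(\Gamma_1\) may be large. We will use \(|\Gamma_1|\le C_T(1+|x|^{\gamma_1})\) with \(\gamma_1f_1\le1\), so that \(\Gamma_1^2\le Cn^{2}\) on the strip. This yields \(\langle M\rangle_{h_n}\le Cn^2h_n\). By Doob's inequality,
\[
\PP_0\Bigl(\sup_{r\le\tau_n^{\rm step}}|M_r|\ge\tfrac n4\,\Big|\,\mathcal F_{\vartheta_n}\Bigr)\le \frac{16\,C n^2h_n}{n^2}=Ch_n .
\]
The bound \(\gamma_1f_{1,s}\le1\) is needed here; we take \(f_{1,s}\le f_1\). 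If only \(\gamma_1f_1\le1\) is available with \(f_{1,s}<f_1\), the estimate is even better. On the complementary event \(\{\sup|M|<\frac n4\}\), the process satisfies \(\bar Z>n-\frac n4-\frac n4\) before the stop, so it cannot exit through \(\frac n2\). It also cannot survive until \(h_n\), since then \(\bar Z_{\vartheta_n+h_n}\ge n+3n-\frac n2>2n\). Hence it exits through \(2n\) before \(h_n\), that is \(\vartheta_{2n}\le\vartheta_n+h_n\).

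On the same event, \(s\widetilde Y_1\ge \frac n2-\|G_1\|_\infty>z_0\) on \([\vartheta_n,\vartheta_{2n}]\). Together with \(\mathcal D_n\), this gives \(\mathcal D_{2n}\) and hence \eqref{probdoubling}. The remaining technical point is to make the localisation by \(\tau_R^{\widetilde X}\) with \(R=Cn^{f_{1,s}}\) compatible with the bound \eqref{eq:dyadic_strip_bounds_linear}, so that Lemma~\ref{lem:stopped_SDE_shifted_Z} applies up to the stop.
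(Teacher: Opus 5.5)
Your route is the paper's. Both proofs use the strip \(\frac n2\le\bar Z\le2n\), absorb \(F_2(\widetilde Y_2)\) through \eqref{eq:Y2_absorption_bound} with \(T_*\) small, take the drift lower bound from \eqref{eq:explosive_drift_growth}, treat the memory term as \(O(nh_n)=o(n)\), bound \(\langle M\rangle_{h_n}\le Cn^2h_n\) via \(\gamma_1f_{1,s}\le1\), and finish with Doob's inequality. Your bookkeeping differs only cosmetically: you use the two-sided event \(\sup|M|<\frac n4\), take \(c_0\) large so the drift exceeds \(3n\), and bound the memory term separately. The paper uses the one-sided event \(\inf M\le-\frac n2\), takes \(c_0=\frac2c\), and absorbs the memory term into the rate \(2cn^{f_{1,s}\chi}-Cn\ge cn^{f_{1,s}\chi}\). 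The localisation you flag is handled as you suggest: the paper proves the bounds for the process stopped at \(\tau_R^{\widetilde X}\) with constants independent of \(R\), then takes \(R>Cn^{f_{1,s}}\).

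One step fails as written. The choice \(CT_*^{\delta_2f_2}\le\frac14\) does not give the pointwise bound \(\varepsilon_s\widetilde X_u\ge cn^{f_{1,s}}\) on the strip.
\begin{itemize}
\item Let \(c_1n^{f_{1,s}}\) be the lower bound of \(\varepsilon_sF_1(\widetilde Y_1)\) on the strip, where \(\bar Z\) may be near \(\frac n2\). Let \(C_1n^{f_{1,s}}\) be the upper bound of \(|F_1(\widetilde Y_1)|\) on \([\theta,\vartheta_n+\tau_n^{\rm step}]\), where \(\bar Z\) may reach \(2n\).
\item Absorption then gives \(\sup_{[0,t]}|\widetilde X|\le\frac43(B+C_1n^{f_{1,s}})\), hence \(|F_2(\widetilde Y_{2,t})|\le\frac43B+\frac{C_1}{3}n^{f_{1,s}}\).
\item The constants from Assumption~\myref{I.5} satisfy \(C_1\ge4^{f_{1,s}}c_1\). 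For example, with \(F_1(y)=Ay\) you get \(c_1\approx\frac{|A|}2\) and \(C_1\approx2|A|\). So the lower bound \((c_1-\frac{C_1}{3})n^{f_{1,s}}\) is negative.
\end{itemize}
The remark after Figure~\ref{fig:feedbackloop} only compares suprema, so it cannot supply an infimum. The fix is the paper's: choose \(CT_*^{\delta_2f_2}\le\min\bigl(\frac12,\frac{c_1}{4C_1}\bigr)\), which is still deterministic.

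A second, smaller point concerns the interval \([\theta,\vartheta_n]\). Controlling \(|\widetilde X|\) there, and getting \(\sup|\bar Z|\le2n\) for the memory term, needs the lower bound \(s\widetilde Y_1>z_0\) from \(\mathcal D_n\), not just \(\bar Z\le n\). Without it, \(F_1\) in direction \(-s\), with exponent \(f_{1,-s}\), is not controlled by \(n^{f_{1,s}}\).
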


\begin{proof}
We work on
\({\{n\ge N_\theta\}\cap}
\mathcal D_n\cap\{\vartheta_n+h_n\le\theta+T_*\}\),
with \(T_*\in(0,T]\) chosen below.
The integer \(N_\theta\) is chosen large enough for all
the bounds below; it depends on the finite quantities
\(B\) from \eqref{eq:Y2_absorption_bound}, \(\|G_1\|_{\infty,[0,\bar T]}\), and
\(\sup_{u\in[0,\theta]}|\bar Z_u|\).
We first stop at \(\tau_R^{\widetilde X}\) and prove the estimates
with constants independent of \(R\).
By the definition of \(\tau_n^{\rm step}\) in \eqref{eq:def-dyadic-time}, we have
\begin{align}
\forall r\in[0,\tau_n^{\rm step}],\qquad
\frac{n}{2}&\le\bar Z_{\vartheta_n+r}\le2n.
\label{eq:dyadic_strip_bounds_linear}
\end{align}
On \(\mathcal D_n\), the definition of \(\vartheta_n\) gives
\begin{align}
\forall u\in[\theta,\vartheta_n],\qquad
z_0-\|G_1\|_{\infty,[0,\bar T]}&<\bar Z_u\le n.
\label{eq:dyadic_history_Z_bounds}
\end{align}
Since \(s\widetilde Y_1=\bar Z+sG_1\),
the growth bounds on \(F_1\) in Assumption~\myref{I.5},
applied with \(z=s\widetilde Y_{1,u}\),
and boundedness of \(G_1\) give,
for large \(n\),
\[
\inf_{r\in[0,\tau_n^{\rm step}]}
\varepsilon_s F_1(\widetilde Y_{1,\vartheta_n+r})\ge c_1n^{f_{1,s}},
\qquad
\sup_{u\in[\theta,\vartheta_n+\tau_n^{\rm step}]}
|F_1(\widetilde Y_{1,u})|\le C_1n^{f_{1,s}}.
\]
After increasing \(B\) to absorb \(\|\beta\|_{\infty,[0,\bar T]}\)
and \(\sup_{u\in[0,\theta]}|X_u|\), the bound on
\(F_2(\widetilde Y_2)\) in \eqref{eq:Y2_absorption_bound} gives,
for \(t\in[\vartheta_n,\vartheta_n+\tau_n^{\rm step}]\),
\[
\sup_{u\in[0,t]}|\widetilde X_u|\le B+C_1n^{f_{1,s}}+CT_*^{\delta_2f_2}\sup_{u\in[0,t]}|\widetilde X_u|.
\]
We choose \(T_*\in(0,T]\) such that
\[
CT_*^{\delta_2f_2}
\le\min\left(\frac{1}{2},\frac{c_1}{4C_1}\right).
\]
It follows that
\[
\sup_{u\in[0,t]}|\widetilde X_u|\le2B+2C_1n^{f_{1,s}}.
\]
By estimate \eqref{eq:Y2_absorption_bound}, we have
\begin{align*}
|F_2(\widetilde Y_{2,t})|
&\le B+CT_*^{\delta_2f_2}(2B+2C_1n^{f_{1,s}})
\le2B+\frac{c_1}{2}n^{f_{1,s}}.
\end{align*}
For all sufficiently large \(n\), we have
\begin{align*}
\varepsilon_s\widetilde X_t
&\ge\varepsilon_s F_1(\widetilde Y_{1,t})
-|F_2(\widetilde Y_{2,t})|-|\beta(t)|
\ge\frac{c_1}{2}n^{f_{1,s}}-2B-\|\beta\|_{\infty,[0,\bar T]}\ge\frac{c_1}{4}n^{f_{1,s}}.
\end{align*}
For sufficiently large \(n\), the upper bound on \(\sup_{u\in[0,t]}|\widetilde X_u|\) and the lower bound on \(\varepsilon_s\widetilde X_t\) imply

\begin{align}
\inf_{r\in[0,\tau_n^{\rm step}]}
\varepsilon_s\widetilde X_{\vartheta_n+r}
&\ge cn^{f_{1,s}},\label{eq:X_bounds_dyadic_linear}\\
\sup_{u\in[0,\vartheta_n+\tau_n^{\rm step}]}
|\widetilde X_u|
&\le Cn^{f_{1,s}}.\label{eq:X_history_bound_dyadic_linear}
\end{align}
For \(R>Cn^{f_{1,s}}\), we can remove the extra stop at \(\tau_R^{\widetilde X}\).
By the lower bound on \(\varepsilon_s\widetilde X\) in \eqref{eq:X_bounds_dyadic_linear}
and the drift lower bound \eqref{eq:explosive_drift_growth} of Assumption~\myref{V.1}, we have
\[
\forall u\in[\vartheta_n,\vartheta_n+\tau_n^{\rm step}],\qquad
{s\rho}K_1(0)\Gamma_1(u,\widetilde X_u)\sigma(u,\widetilde X_u)
\ge2cn^{f_{1,s}\chi},
\]
for some \(c>0\).
Since \(\bar Z\) is bounded on \([0,\theta]\),
the bounds \eqref{eq:dyadic_history_Z_bounds} and
\eqref{eq:dyadic_strip_bounds_linear} give
\(\sup_{u\in[0,\vartheta_n+\tau_n^{\rm step}]}|\bar Z_u|\le2n\)
for large \(n\).
We set
\[
\forall r\in[0,h_n],\qquad
M_{r}:=sK_1(0)
\int_{\vartheta_n}^{\vartheta_n+(r\wedge\tau_n^{\rm step})}
\Gamma_1(u,\widetilde X_u)\,dW_u.
\]
We write \(\langle M\rangle\) for the predictable quadratic variation
of \(M\).
Subtracting \eqref{eq:resolvent_semimartingale_Zbar} of
Lemma~\ref{lem:stopped_SDE_shifted_Z} at \(\vartheta_n\)
from the same identity at \(\vartheta_n+r\), and using
\(\bar Z_{\vartheta_n}=n\), gives, by
\eqref{eq:memory_pointwise_bound} of the same lemma, with
\(R>Cn^{f_{1,s}}\) from \eqref{eq:X_history_bound_dyadic_linear},
for large \(n\), since \(f_{1,s}\chi>1\),
\begin{align}
\forall r\in[0,\tau_n^{\rm step}],\qquad
\bar Z_{\vartheta_n+r}
&=n+M_{r}+{s\rho}K_1(0)\int_{\vartheta_n}^{\vartheta_n+r}
\Gamma_1(u,\widetilde X_u)\sigma(u,\widetilde X_u)\,du\notag\\
&\quad+\int_{\vartheta_n}^{\vartheta_n+r}
\bigl(r_{\rm mem}(0)\bar Z_u+(dr_{\rm mem}*\bar Z)(u)\bigr)\,du\notag\\
&\ge n+(2cn^{f_{1,s}\chi}-Cn)r+M_{r}\notag\\
&\ge n+cn^{f_{1,s}\chi}r+M_{r}.
\label{eq:dyadic_Z_lower_bound}
\end{align}
By the polynomial growth bound on \(\Gamma_1\) in Assumption~\myref{I.5} and \(\gamma_1f_{1,s}\le\gamma_1f_1\le1\), we have
{
\begin{align}
\langle M\rangle_{h_n}
&=K_1(0)^2
\int_{\vartheta_n}^{\vartheta_n+\tau_n^{\rm step}}
|\Gamma_1(u,\widetilde X_u)|^2\,du\notag\\
&\le C\left(1+
\sup_{u\in[0,\vartheta_n+\tau_n^{\rm step}]}
|\widetilde X_u|^{2\gamma_1}\right)h_n
\le Cn^{2\gamma_1f_{1,s}}h_n\le Cn^2h_n.
\label{eq:quadratic_variation_bound}
\end{align}
}
For sufficiently large \(n\), the identity \(s\widetilde Y_1=\bar Z+sG_1\) gives
\[
\inf_{r\in[0,\tau_n^{\rm step}]}
s\widetilde Y_{1,\vartheta_n+r}
\ge \frac{n}{2}-\|G_1\|_{\infty,[0,\bar T]}>z_0,
\]
so
\(\bar Z_{\vartheta_n+\tau_n^{\rm step}}=2n\) implies
\(\mathcal D_{2n}\) and
\(\vartheta_{2n}=\vartheta_n+\tau_n^{\rm step}\le\vartheta_n+h_n\).
In the definition of \(h_n\) in \eqref{eq:def-dyadic-time}, we choose \(c_0:=\frac{2}{c}\), where \(c>0\) is the coefficient
of \(n^{f_{1,s}\chi}r\) in the lower bound \eqref{eq:dyadic_Z_lower_bound}.
If \(\bar Z_{\vartheta_n+\tau_n^{\rm step}}\ne2n\), the lower bound on \(\bar Z\) in \eqref{eq:dyadic_Z_lower_bound} and
\(cn^{f_{1,s}\chi}h_n=2n\) 
give
\begin{align*}
M_{\tau_n^{\rm step}}
&\le\bar Z_{\vartheta_n+\tau_n^{\rm step}}-n-cn^{f_{1,s}\chi}\tau_n^{\rm step}
\le
\begin{cases}
\frac{n}{2}-n=-\frac{n}{2},
& \bar Z_{\vartheta_n+\tau_n^{\rm step}}=\frac{n}{2},\\
2n-n-cn^{f_{1,s}\chi}h_n=-n,
& \tau_n^{\rm step}=h_n.
\end{cases}
\end{align*}
Doob's inequality applied to \(M^2\) and
the quadratic variation bound \eqref{eq:quadratic_variation_bound} give
\begin{align*}
\PP_0\left(
\inf_{r\in[0,h_n]}M_{r}\le-\frac{n}{2}
\,\middle|\,\mathcal F_{\vartheta_n}
\right)
&\le\frac{4}{n^2}
\E_0[\langle M\rangle_{h_n}\mid\mathcal F_{\vartheta_n}]\le Ch_n,
\end{align*}
which proves \eqref{probdoubling} of Lemma~\ref{lem:dyadic_step_Y_linear}.
\end{proof}

\begin{proof}[Proof of Theorem~\ref{nonmartingality}\myref{nonmartingality_i}]
We use the constants \(T_*,c_0,C\) and the integer \(N_\theta\) from Lemma~\ref{lem:dyadic_step_Y_linear}, and the times \(h_n\) defined in \eqref{eq:def-dyadic-time}.
There exists an a.s.\ finite
\(\mathcal F_\theta\)-measurable integer \(n_0\ge N_\theta\), such that
$n_0>1+\sup_{u\in[0,\theta]}|\bar Z_u|,$
and
\begin{align}
\sum_{k=0}^\infty h_{2^kn_0}
&\le\min\left(\frac{T_*}{2},\frac{1}{2C}\right).
\label{eq:initial_dyadic_sums}
\end{align}
Indeed, \(\bar Z\) is continuous on \([0,\theta]\) and, since \(f_{1,s}\chi>1\), we have
\begin{align*}
\sum_{k=0}^\infty h_{2^kn_0}
&=c_0n_0^{1-f_{1,s}\chi}\sum_{k=0}^\infty(2^{1-f_{1,s}\chi})^k
=\frac{c_0n_0^{1-f_{1,s}\chi}}{1-2^{1-f_{1,s}\chi}}
\xrightarrow[n_0\to\infty]{}0.
\end{align*}
The events
\begin{align*}
\mathcal H_0
&:=\mathcal D_{n_0}\cap
\left\{\vartheta_{n_0}\le\theta+\frac{T_*}{2}\right\},\\
\forall N\in\mathbb N,\qquad \mathcal H_{N+1}
&:=\mathcal H_N\cap\mathcal D_{2^{N+1}n_0}\cap
\{\vartheta_{2^{N+1}n_0}
\le\vartheta_{2^Nn_0}+h_{2^Nn_0}\},
\end{align*}
satisfy \(\mathcal H_N\in\mathcal F_{\vartheta_{2^Nn_0}}\).
The conditional estimates at the random levels
\(2^Nn_0\) follow by partitioning over
\(\{n_0=j\}\in\mathcal F_\theta\), \(j\ge1\).
By Lemma~\ref{lem:reach_from_positive_X_linear}, we have
\[
\PP_0(\mathcal H_0\mid\mathcal F_\theta)>0
\quad\text{a.s.\ on }
\{\varepsilon_s X_\theta>x^*,\ sY_{1,\theta}>z_0\}.
\]
On \(\mathcal H_N\), we have
\[
\vartheta_{2^Nn_0}+h_{2^Nn_0}
\le\theta+\frac{T_*}{2}+\sum_{k=0}^{N}h_{2^kn_0}
\le\theta+T_*.
\]
By Lemma~\ref{lem:dyadic_step_Y_linear} and the tower property,
for \(N\ge0\), we have
\begin{align*}
\PP_0(\mathcal H_{N+1}\mid\mathcal F_\theta)
&=\E_0\left[
\mathbf{1}_{\mathcal H_N}
\PP_0\left(\mathcal H_{N+1}\mid\mathcal F_{\vartheta_{2^Nn_0}}\right)
\,\middle|\,\mathcal F_\theta
\right]\ge(1-Ch_{2^Nn_0})
\PP_0(\mathcal H_N\mid\mathcal F_\theta).
\end{align*}
By induction, for \(N\ge1\), we have
\begin{align*}
\PP_0(\mathcal H_N\mid\mathcal F_\theta)
&\ge{\PP_0}(\mathcal H_0\mid\mathcal F_\theta)
\prod_{k=0}^{N-1}(1-Ch_{2^kn_0})\ge{\PP_0}(\mathcal H_0\mid\mathcal F_\theta)
\left(1-C\sum_{k=0}^{N-1}h_{2^kn_0}\right)\ge\frac{1}{2}\PP_0(\mathcal H_0\mid\mathcal F_\theta).
\end{align*}
Since the events \(\mathcal H_N\) decrease, we have
\[
\PP_0\left(\bigcap_{N\ge0}\mathcal H_N
\,\middle|\,\mathcal F_\theta\right)
\ge\frac{1}{2}\PP_0(\mathcal H_0\mid\mathcal F_\theta)>0
\quad\text{a.s.\ on }
\{\varepsilon_s X_\theta>x^*,\ sY_{1,\theta}>z_0\}.
\]
On \(\bigcap_{N\ge0}\mathcal H_N\),
the lower bound on \(\varepsilon_s\widetilde X\) in \eqref{eq:X_bounds_dyadic_linear} gives
\[
\vartheta_{2^Nn_0}\le\theta+T_*\le\theta+T,
\qquad
\varepsilon_s\widetilde X_{\vartheta_{2^Nn_0}}
\ge c(2^Nn_0)^{f_{1,s}}\xrightarrow[N\to\infty]{}+\infty.
\]
Recalling from Assumption~\myref{standing_ii} that
\[
\tau_{(\theta,n)}^{\widetilde X^\theta}
:=\inf\{u\ge0:\ |\widetilde X_{\theta+u}^{\theta}|\ge n\},
\]
the bound \(|\widetilde X|\ge\varepsilon_s\widetilde X\) gives
\begin{align}
\lim\limits_{n\to\infty}\PP_0\left(
\tau_{(\theta,n)}^{\widetilde X^\theta}\le T
\,\middle|\,\mathcal F_\theta\right)
&\ge{\PP_0}\left(\bigcap_{N\ge0}\mathcal H_N
\,\middle|\,\mathcal F_\theta\right)>0
\label{eq:conditional_explosion}
\end{align}
a.s.\ on
\(\{\varepsilon_s X_\theta>x^*,\ sY_{1,\theta}>z_0\}\).
The bound in \eqref{eq:conditional_explosion} and Lemma~\ref{martingality0} give,
for every bounded stopping time \(\theta\),
\begin{align}
\E_0[S_{\theta+T}\mid\mathcal F_\theta]&<S_\theta
\label{eq:strict_conditional_mass_loss_on_branch}
\end{align}

a.s.\ on
\(\{\varepsilon_s X_\theta>x^*,\ sY_{1,\theta}>z_0\}\).
With \(\theta=t_0\) and \(T=t-t_0\), the supermartingale property gives
\(\E_0[S_t]<\E_0[S_{t_0}]\) 
whenever
\(\PP_0(\varepsilon_s X_{t_0}>x^*,\ sY_{1,t_0}>z_0)>0\).
Thus \(S\) is a strict local martingale on \([t_0,t]\).
\end{proof}

\subsection{Refinement}\label{app:refinement-proof}

The strict conditional inequality in
Theorem~\ref{nonmartingality}\myref{nonmartingality_ii}
concerns the expected asset price after \(\tau_*\), given
\(\mathcal F_{\tau_*}\), on the event \(\{\tau_*<\infty\}\).

\begin{proof}[Proof of Theorem~\ref{nonmartingality}\myref{nonmartingality_ii}]
We fix \(T>0\) and choose any \(N\in\mathbb N^\ast\) such that
\(\PP_0(\tau_*\le N)>0\).
Such an integer exists since
\(\PP_0(\tau_*\le N)\uparrow\PP_0(\tau_*<\infty)>0\)
as \(N\to\infty\).
The event \(\{\tau_*\le N\}\) belongs to \(\mathcal F_{\tau_*}\).
Let \(A\in\mathcal F_{\tau_*}\) satisfy
\(A\subset\{\tau_*\le N\}\). We have
\(A\in\mathcal F_{\tau_*\wedge N}\).
We assume that \(\PP_0(A)>0\).
By the definition of \(\tau_*\) in Assumption~\myref{V.2} and continuity of \(X,Y_1\), we have
\[
A=\bigcup_{h\in\mathbb Q\cap(0,T)}
A\cap\{\varepsilon_s X_{\tau_*+h}>x^*,\ sY_{1,\tau_*+h}>z_0\}.
\]
This union is countable, so one of its events has positive probability.
For the corresponding \(h\),
\eqref{eq:strict_conditional_mass_loss_on_branch} applies at
\((\tau_*\wedge N)+h\), over the interval of length \(T-h\),
on \(A\cap\{\varepsilon_s X_{\tau_*+h}>x^*,\ sY_{1,\tau_*+h}>z_0\}\).
Since \(\tau_*\wedge N=\tau_*\) on \(A\), the tower property and
optional sampling give
\begin{align*}
\E_0[\mathbf1_A S_{\tau_*+T}]
&=\E_0\left[
\mathbf1_A\E_0[S_{(\tau_*\wedge N)+T}
\mid\mathcal F_{(\tau_*\wedge N)+h}]
\right]\\
&<\E_0[\mathbf1_A S_{(\tau_*\wedge N)+h}]\\
&=\E_0\left[
\mathbf1_A\E_0[S_{(\tau_*\wedge N)+h}
\mid\mathcal F_{\tau_*\wedge N}]
\right]\\
&\le\E_0[\mathbf1_A S_{\tau_*}].
\end{align*}
Since \(A\) is arbitrary, the characterization of conditional
expectation gives the claimed strict inequality on \(\{\tau_*\le N\}\).
The union over \(N\) proves the result on \(\{\tau_*<\infty\}\).
\end{proof}

\section{Conclusion}

We have studied the martingale property of stochastic and path-dependent Volterra volatility models in a common setting.  A Girsanov change of measure reduces martingality of \(S\) to non-explosion of \(\widetilde X^\theta\) in \eqref{tildePDV}. On the one hand, we prove martingality either from a one-sided bound on the state, as in Assumption~\myref{III.1},
or from a one-sided growth bound on the Girsanov drift, as in Assumption~\myref{III.2}.
On the other hand, for smooth kernels satisfying Assumption~\myref{IV.2},
the growth and monotonicity conditions in Assumption~\myref{V.1}
and the accessibility condition in Assumption~\myref{V.2}
give strict local martingality.

\medskip

Our results also give the known martingality results for lognormal stochastic volatility and affine Volterra-type models. They give new results for the Volterra--Guyon--Lekeufack model, upside-enhanced Volterra--Guyon--Lekeufack models, the polynomial PDV model, and quadratic Volterra--Heston-type models. They show how the sign of \(\sigma\), the correlation parameter, and the growth of \(\Gamma_1\sigma\) affect whether the price process is a true or strict local martingale.

\medskip

To extend the explosion criterion to rough kernels, we would need a proof that does not use the semimartingale representation of \(\bar Z\).

\appendix

\section{A fractional Sobolev estimate}
\label{app:fractional-sobolev-estimate}

\begin{lemma}\label{lem:fractional_sobolev_sup}
For \(r>2\), there exists
\(C_r>0\) such that every real-valued process \(\varphi\) with
continuous paths on \([0,T]\) satisfies, for \(t\in(0,T]\),
\begin{align}
\E_0\left[\sup_{u\in[0,t]}|\varphi_u-\varphi_0|^r\right]
&\le C_rt
\int_0^t\int_0^t
\frac{\E_0[|\varphi_v-\varphi_u|^r]}{|v-u|^3}\,du\,dv.
\label{eq:fractional_sobolev_sup}
\end{align}
\end{lemma}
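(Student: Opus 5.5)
The estimate is a pathwise Garsia--Rodemich--Rumsey (GRR) inequality, followed by taking expectations. I would apply GRR on \([0,t]\) with \(\Psi(x)=|x|^r\) and \(p(u)=u^{\alpha}\), choosing \(\alpha\) so that the weight \(|v-u|^{-3}\) appears exactly. In its standard form, for a continuous \(f\) on \([0,t]\) with
\[
\mathcal B:=\int_0^t\int_0^t\Psi\!\left(\frac{|f(v)-f(u)|}{p(|v-u|)}\right)du\,dv<\infty,
\]
GRR gives
\[
|f(v)-f(u)|\le 8\int_0^{|v-u|}\Psi^{-1}\!\left(\frac{4\mathcal B}{x^2}\right)dp(x).
\]
Taking \(p(x)=x^{3/r}\), we have \(\Psi(|f(v)-f(u)|/p(|v-u|))=|f(v)-f(u)|^r|v-u|^{-3}\), so \(\mathcal B\) is the double integral in \eqref{eq:fractional_sobolev_sup} evaluated pathwise.

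Next I would compute the right-hand side explicitly. We have \(\Psi^{-1}(y)=y^{1/r}\) and \(dp(x)=\frac{3}{r}x^{3/r-1}dx\). The integrand is therefore a constant times \(\mathcal B^{1/r}x^{-2/r+3/r-1}=\mathcal B^{1/r}x^{1/r-1}\). This is integrable at \(0\) because \(1/r>0\), and the integral equals \(C_r\mathcal B^{1/r}|v-u|^{1/r}\). This yields
\[
\sup_{u\in[0,t]}|\varphi_u-\varphi_0|^r\le C_r\,t\,\mathcal B
\]
pathwise, for every continuous path, with the convention that the bound is trivial when \(\mathcal B=\infty\).

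Finally, I would take \(\E_0\) and apply Tonelli's theorem, which exchanges the expectation with the nonnegative double integral. This gives \eqref{eq:fractional_sobolev_sup}.

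The main point to get right is the exponent bookkeeping in GRR. The choice \(p(x)=x^{3/r}\) is forced by the kernel \(|v-u|^{-3}\), and the resulting integrand \(x^{1/r-1}\) converges for every \(r>0\), not only \(r>2\). The hypothesis \(r>2\) is only what makes the right-hand side finite in the application through \(r\delta_1f_1>2\); it is not needed for the inequality itself. I would also check that the GRR constants \(8\) and \(4\) depend only on \(r\), so that \(C_r\) is independent of \(t\) and \(T\).
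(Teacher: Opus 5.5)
Your proposal is correct and is essentially the paper's argument. The paper simply invokes Friz--Victoir's Corollary A.2 with \(q=r\) and \(\alpha=2/r\). That corollary is the Garsia--Rodemich--Rumsey inequality with \(\Psi(x)=|x|^r\) and \(p(x)=x^{\alpha+1/q}=x^{3/r}\), so you have unpacked it directly, obtaining \(C_r=4\cdot 24^r\), and then used Tonelli as the paper implicitly does. Your remark that the explicit computation does not use \(r>2\) is also fair: the paper's restriction only reflects the range \(\alpha=2/r<1\) of the cited corollary.
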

This is a consequence of Corollary~A.2 of \cite{FrizVictoir2010}, with \(q=r\) and
\(\alpha=\frac{2}{r}\).

\section{Proof of Lemma~\ref{lem:stopped_SDE_shifted_Z}}\label{app:proof-stopped-SDE-shifted-Z}

Under the kernel regularity in Assumption~\myref{IV.2},
we apply \cite[Chapter~2, Theorem~3.5]{GripenbergLondenStaffans1990} with
\[
\forall t\in[0,+\infty),\qquad
k(t)=\frac{K_1'(t)}{K_1(0)},
\qquad
f(t)=-\frac{K_1'(t)}{K_1(0)^2}.
\]
This gives a unique solution
\(\ell_{\rm res}\in C(\mathbb R_+)\cap BV_{\rm loc}(\mathbb R_+)\)
to the Volterra equation below.
\begin{align}
\ell_{\rm res}+\frac{K_1'}{K_1(0)}*\ell_{\rm res}
&=-\frac{K_1'}{K_1(0)^2}.
\end{align}
We define \(\mathcal R_1\) by
\begin{align}
\mathcal R_1:=\frac{\delta_0}{K_1(0)}+\ell_{\rm res}(t)\,dt.
\label{eq:smooth-canonical-resolvent}
\end{align}
Since \((K_1*\mathcal R_1)(0)=1\) and
\[
\forall t>0,\qquad
(K_1*\mathcal R_1)'(t)
=\frac{K_1'(t)}{K_1(0)}+K_1(0)\ell_{\rm res}(t)
+(K_1'*\ell_{\rm res})(t)=0,
\]
we have \(K_1*\mathcal R_1=1\).
The function \(r_{\rm mem}\) defined in \eqref{eq:smooth-memory-kernel} therefore satisfies
\[
r_{\rm mem}=K_1'*\mathcal R_1
=\frac{K_1'}{K_1(0)}+K_1'*\ell_{\rm res}
=-K_1(0)\ell_{\rm res}
\in C(\mathbb R_+)\cap BV_{\rm loc}(\mathbb R_+).
\]

\begin{proof}
For Lemma~\ref{lem:stopped_SDE_shifted_Z}\myref{stopped_SDE_shifted_Z_i}, we fix \(R\in\mathbb N^\ast\) and set
\[
\forall t\in[0,+\infty),\qquad
U_t^{(R)}
:=
s
\int_0^t
\mathbf{1}_{\{r\le\tau_R^{\widetilde X}\}}
\Gamma_1(r,\widetilde X_{r})\,dW_{r}
+
s\rho\int_0^t
\mathbf{1}_{\{r\le\tau_R^{\widetilde X}\}}
\mathbf{1}_{\{r\ge\theta\}}
\Gamma_1(r,\widetilde X_{r})\sigma(r,\widetilde X_{r})\,dr.
\]
For all $t \in [0,\tau_R^{\widetilde X}]$, we have
$\bar Z_t=(K_1\ast dU^{(R)})(t).$
On each bounded time interval, stopping at \(\tau_R^{\widetilde X}\) makes the coefficients bounded. The derivative \(K_1'\) is also bounded on this interval. Writing
\[
\forall t\ge0,\qquad K_1(t)=K_1(0)+\int_0^tK_1'(u)\,du,
\]
the stochastic Fubini theorem
\cite[Theorem~2.2]{Veraar2012} gives
\begin{align}\label{eq:resolvent_semimartingale_Z_case}
\forall t \in [0,\tau_R^{\widetilde X}],
\qquad
\bar Z_t
&=
K_1(0)U_t^{(R)}
+
\int_0^t (K_1'\ast dU^{(R)})(r)\,dr.
\end{align}
Since \(\mathcal R_1*K_1=1\), we have
\(r_{\rm mem}*K_1=K_1'*(\mathcal R_1*K_1)=K_1-K_1(0)\).
Differentiating gives
\[
K_1'=r_{\rm mem}(0)K_1+dr_{\rm mem}*K_1.
\]
Applying \cite[Lemma~2.1]{AbiJaberLarssonPulido2019}
with kernel \(K_1\) and measure \(dr_{\rm mem}\)
to the martingale part \(M^{(R)}\) of \(U^{(R)}\),
whose quadratic variation has a locally bounded density, gives
\[
\forall t\ge0,\qquad
\bigl(dr_{\rm mem}*(K_1*dM^{(R)})\bigr)(t)
=\bigl((dr_{\rm mem}*K_1)*dM^{(R)}\bigr)(t).
\]
Together with ordinary Fubini for the drift part, this gives up to indistinguishability, for
\(t\in[0,\tau_R^{\widetilde X}]\),
\begin{align*}
(K_1'*dU^{(R)})(t)
&=r_{\rm mem}(0)(K_1*dU^{(R)})(t)
+\bigl((dr_{\rm mem}*K_1)*dU^{(R)}\bigr)(t)\\
&=r_{\rm mem}(0)(K_1*dU^{(R)})(t)
+\bigl(dr_{\rm mem}*(K_1*dU^{(R)})\bigr)(t)\\
&=r_{\rm mem}(0)\bar Z_t+(dr_{\rm mem}*\bar Z)(t),
\end{align*}
since \(K_1*dU^{(R)}=\bar Z\) on \([0,\tau_R^{\widetilde X}]\).
Together with \eqref{eq:resolvent_semimartingale_Z_case},
this proves Lemma~\ref{lem:stopped_SDE_shifted_Z}\myref{stopped_SDE_shifted_Z_i}.
The bound in Lemma~\ref{lem:stopped_SDE_shifted_Z}\myref{stopped_SDE_shifted_Z_ii} follows from
\[
\forall t\in[0,\bar T\wedge\tau_R^{\widetilde X}],\qquad
|(dr_{\rm mem}*\bar Z)(t)|
\le\int_{(0,t]}|\bar Z_{t-r}|\,|dr_{\rm mem}|(dr)
\le |dr_{\rm mem}|([0,\bar T])\sup_{u\in[0,t]}|\bar Z_u|.
\]
\end{proof}

\section{\texorpdfstring{Proof of Lemma~\ref*{lem:accessibility_models}}{Proof of the accessibility lemma}}
\label{app:accessibility_models}

We complete the proof of Lemma~\ref{lem:accessibility_models}
for the polynomial PDV model, specification~\eqref{eq:F1A},
and the Quadratic Volterra Heston+ model.

\subsection{Polynomial PDV and specification (A)}
\label{app:accessibility_pdv_A}

We use the assumptions of Theorem~\ref{nonmartingality}.
For the polynomial PDV model and the model with specification~\eqref{eq:F1A}, respectively, we have
\[
 X={\beta+F_1(Y_1)}+F_2(Y_2),\qquad
 \sigma=\sqrt X\ \text{or}\ X,\qquad
 Y_2=G_2+K_2*\sigma^2\ge G_2{\ge0}.
\]
Here \(\beta\) is constant in both models, and
we have $z_0=-\frac{\beta_1}{2\beta_1^{(2)}}$ or $\frac{\beta_{0,1}-\beta_{0,2}}{\beta_{1,2}-\beta_{1,1}}$.
In both models, we have \(\Gamma_1=\sigma\) and \(\Gamma_2=\sigma^2\).
The function \(F_1\) decreases on \((-\infty,z_0]\) and has its global minimum at \(z_0\). Also, we have \(\lim\limits_{y\to-\infty}F_1(y)=+\infty\), and \(F_2\) is nondecreasing and unbounded above.
We assume that \(X_0>0\) and fix \(T>0\).
If \(Y_{1,0}>z_0\), then \(X_0>0\) gives \(\tau_*=0<T\).
We now consider \(Y_{1,0}\le z_0\).
\begin{rk}
Moving directly towards \(z_0\) may bring \(X\) to zero.
We therefore choose a deterministic reference path for \(Y_1\)
that first takes sufficiently negative values and then reaches a value above \(z_0\).
Since \(X={\beta+F_1(Y_1)}+F_2(Y_2)\), large negative values of \(Y_1\) make \({\beta+F_1(Y_1)}\), \(X\), and \(\sigma^2\) large.
This creates a large contribution to \(Y_2\) through
\(K_2*\sigma^2\). If the reference path moves from these negative values to a value above \(z_0\) in a sufficiently short time, as quantified in \eqref{eq:accessibility_K2_window_lower_bound},
this contribution keeps \(F_2(Y_2)\) large enough so that
$X={\beta+F_1(Y_1)}+F_2(Y_2)\ge {\beta+F_1(z_0)}+F_2(Y_2)>0.$
Thus the path crosses \(z_0\) while the diffusion coefficient stays nonzero, with \(\Gamma_1=\sigma\ne0\).
\end{rk}
For the polynomial PDV model, the quadratic expression for \(X\) in
\eqref{eq:poly-pdv} gives
\begin{align*}
\forall t\in[0,T],\qquad
\sigma_t^2&=X_t\le C(1+Y_{1,t}^2)+C|Y_{2,t}|.
\end{align*}
For specification~\eqref{eq:F1A}, the linear growth of \(F_1\) gives
\begin{align*}
\forall t\in[0,T],\qquad
\sigma_t^2
&=\bigl({\beta+F_1(Y_{1,t})}+\beta_2\sqrt{Y_{2,t}}\bigr)^2\\
&\le{2(\beta+F_1(Y_{1,t}))^2}+2\beta_2^2Y_{2,t}
\le C(1+Y_{1,t}^2)+C|Y_{2,t}|.
\end{align*}
In both models, \(Y_2=G_2+K_2*\sigma^2\) and \(K_2\ge0\), so we have
\[
\forall t\in[0,T],\qquad
|Y_{2,t}|\le\|G_2\|_{\infty,[0,T]}
+\int_0^tK_2(t-s)\sigma_s^2\,ds.
\]
Substituting the bound on \(|Y_{2,t}|\) into the corresponding bound on \(\sigma_t^2\) gives, for some \(C>0\),
\begin{align}
\forall t\in[0,T],\qquad
\sigma_t^2&\le C(1+Y_{1,t}^2)
+C\int_0^tK_2(t-s)\sigma_s^2\,ds.
\label{eq:accessibility_sigma_growth}
\end{align}

\medskip
\noindent\textbf{Step 1: Moving to the left with positive volatility.} The map
\((s,a)\mapsto\beta+F_1((G_1(s)+a)\wedge z_0)+F_2(G_2(s))\)
is continuous at \((0,0)\), where its value is \(X_0>0\).
Hence there exist \(b\in(0,\frac{T}{2})\) and \(\delta\in(0,\frac{1}{2}]\) such that
\begin{align}
\forall s\in[0,b],\qquad
{\beta+F_1((G_1(s)+\delta)\wedge z_0)}+F_2(G_2(s))
&\ge\frac{X_0}{2}.
\label{eq:accessibility_initial_lower_bound}
\end{align}
Continuity of \(v\mapsto\int_v^{v+\frac{b}{2}}K_2(u)\,du\) and
the inequality
\(\int_0^{\frac{b}{2}}K_2(u)\,du>0\) from Assumption~\ref{assumption1}
give \(h\in(0,T-b)\) such that
\begin{align}
\inf_{v\in[0,h]}\int_v^{v+\frac{b}{2}}K_2(u)\,du
&\ge\frac{1}{2}\int_0^{\frac{b}{2}}K_2(u)\,du>0.
\label{eq:accessibility_K2_window_lower_bound}
\end{align}
For \(R\ge0\), the function
\[
\forall s\in[0,T],\qquad
p(s)=-Rs+\frac{((s-b)^+)^2}{h^2}
(z_0+1-G_1(b+h)+R(b+h))
\]
belongs to \(C^1([0,T])\), equals \(-Rs\) on \([0,b]\), and satisfies
\(G_1(b+h)+p(b+h)=z_0+1\). We define
\[
\tau:=(b+h)\wedge
\inf\{s\in[0,b+h]:
|Y_{1,s}-G_1(s)-p(s)|\ge\delta\}.
\]
Since \(p(0)=0\) and \(Y_1-G_1-p\) is continuous, we have
\[
\sup_{s\in[0,\tau]}|Y_{1,s}-G_1(s)-p(s)|\le\delta.
\]
In particular, we have
\begin{align}
\forall s\in[0,b\wedge\tau],\qquad
Y_{1,s}&\le G_1(s)-Rs+\delta\le G_1(s)+\delta.
\label{eq:accessibility_Y1_upper_bound}
\end{align}
Since \(F_1\) is nonincreasing on \((-\infty,z_0]\) and has its global minimum at \(z_0\), we have
\[
\forall s\in[0,b\wedge\tau],\qquad
F_1(Y_{1,s})\ge {F_1((G_1(s)+\delta)\wedge z_0)}.
\]
Also, \(Y_2\ge G_2\) and \(F_2\) is nondecreasing. Hence,
by the initial lower bound \eqref{eq:accessibility_initial_lower_bound}, we have
\[
\forall s\in[0,b\wedge\tau],\qquad
X_s\ge {\beta+F_1((G_1(s)+\delta)\wedge z_0)}+F_2(G_2(s))
\ge\frac{X_0}{2}.
\]
On \(\{\tau<b\}\), we have \(X_s\ge \frac{X_0}{2}\) for every \(s\in[0,\tau]\).
On \(\{\tau\ge b\}\), for all sufficiently large \(R\),
the bound on \(Y_1\) in \eqref{eq:accessibility_Y1_upper_bound} gives
\begin{align*}
\sup_{s\in[\frac{b}{2},b]}Y_{1,s}
&\le{\|G_1\|_{\infty,[0,b]}-\frac{Rb}{2}+\delta}<z_0,\\
\inf_{s\in[\frac{b}{2},b]}X_s
&\ge {\beta+F_1\left(\|G_1\|_{\infty,[0,b]}-\frac{Rb}{2}+\delta\right)}
+\min_{s\in[0,b]}F_2(G_2(s))
\xrightarrow[R\to\infty]{}+\infty.
\end{align*}
For sufficiently large \(R\), this lower bound is at least \(1\).
Thus we have \(\sigma_s^2\ge X_s\) on \([\frac{b}{2},b]\) in both models,
and we have
\begin{align}
\inf_{s\in[\frac{b}{2},b]}\sigma_s^2
&\ge\beta+F_1\left(\|G_1\|_{\infty,[0,b]}-\frac{Rb}{2}+\delta\right)
+\min_{s\in[0,b]}F_2(G_2(s))
\xrightarrow[R\to\infty]{}+\infty.
\label{eq:accessibility_sigma_lower_divergence}
\end{align}

\medskip
\noindent\textbf{Step 2: Crossing \(z_0\) while keeping \(X\) positive.} On \(\{\tau\ge b\}\), for every \(v\in[0,h]\),
the lower bound on the integral of \(K_2\) in \eqref{eq:accessibility_K2_window_lower_bound} gives
{
\[
\int_{\frac{b}{2}}^b K_2(b+v-s)\,ds
=\int_v^{v+\frac{b}{2}}K_2(u)\,du
\ge\frac{1}{2}\int_0^{\frac{b}{2}}K_2(u)\,du.
\]
Consequently, we have
}
\begin{align*}
Y_{2,b+v}
&\ge G_2(b+v)+\int_{\frac{b}{2}}^bK_2(b+v-s)\sigma_s^2\,ds
\ge\min_{s\in[0,T]}G_2(s)
+\frac{1}{2}\left(\inf_{s\in[\frac{b}{2},b]}\sigma_s^2\right)
\int_0^{\frac{b}{2}}K_2(u)\,du.
\end{align*}
By \eqref{eq:accessibility_sigma_lower_divergence}, since \(F_1\ge F_1(z_0)\) and \(F_2(y)\xrightarrow[y\to+\infty]{}+\infty\),
there exists \(R\) large enough that, on \(\{\tau\ge b\}\),
\begin{align}
\forall v\in[0,h],\qquad
X_{b+v}&\ge {\beta+F_1(z_0)}+F_2(Y_{2,b+v})\ge1.
\label{eq:accessibility_positive_return}
\end{align}
We keep this choice of \(R\), \(p\), and \(\tau\). Then we have
\[
\forall s\in[0,\tau],\qquad
X_s\ge\frac{X_0}{2}\wedge1.
\]

\medskip
\noindent\textbf{Step 3: Applying the tube estimate.} The definition of \(\tau\) gives
\[
\sup_{s\in[0,\tau]}|Y_{1,s}|
\le\|G_1\|_{\infty,[0,T]}+\|p\|_{\infty,[0,T]}+\delta.
\]
The Volterra--Gr\"onwall lemma \cite[Lemma~2.2]{Zhang2010} applied to
the bound on \(\sigma^2\) in \eqref{eq:accessibility_sigma_growth} on \([0,\tau]\) gives
a finite constant \(C\), depending on \(p,\delta,T\) and the
initial curves, such that
\begin{align}
\sup_{s\in[0,\tau]}|\sigma_s|&\le C.
\label{eq:accessibility_sigma_upper}
\end{align}
Therefore, for some \(0<c\le C<\infty\), we have
\[
\forall s\in[0,\tau],\qquad
c\le|\Gamma_1(s,X_s)|=|\sigma_s|\le C.
\]
As in the proof of Lemma~\ref{lem:stopped_SDE_shifted_Z}, the model
\eqref{PDVeq} gives the semimartingale representation
\begin{align}
\forall t\in[0,\tau],\qquad
Y_{1,t}-G_1(t)
&=K_1(0)\int_0^t\Gamma_1(u,X_u)\,dW_u
\notag\\
&\quad+\int_0^t\left[
r_{\rm mem}(0)(Y_{1,u}-G_1(u))
+\bigl(dr_{\rm mem}*(Y_1-G_1)\bigr)(u)
\right]du.
\label{eq:accessibility_pdv_semimartingale}
\end{align}
The bound \eqref{eq:memory_pointwise_bound}, with \(Y_1-G_1\) in place of
\(\bar Z\), bounds its drift on \([0,\tau]\).
 Lemma~\ref{lem:accessibility_tube}, applied to
\(Y_1-G_1\) on \([0,b+h]\), with stopping time \(\tau\), gives
\[
\PP_0\left(
\sup_{s\in[0,\tau]}|Y_{1,s}-G_1(s)-p(s)|<\delta
\right)>0.
\]
On \(\{\tau<b+h\}\), continuity of \(Y_1-G_1-p\) gives
\(|Y_{1,\tau}-G_1(\tau)-p(\tau)|=\delta\).
Therefore we have \(\PP_0(\tau=b+h)>0\).
On \(\{\tau=b+h\}\), \eqref{eq:accessibility_positive_return} gives \(X_{b+h}\ge1\),
and the definition of \(\tau\) gives
\begin{align*}
Y_{1,b+h}
&\ge G_1(b+h)+p(b+h)-\delta
=z_0+1-\delta
\ge z_0+\frac{1}{2}.
\end{align*}
Hence we have
\begin{align}
\PP_0(X_{b+h}>0,\ Y_{1,b+h}>z_0)
&\ge\PP_0(\tau=b+h)>0.
\label{eq:accessibility_crossing_probability}
\end{align}
Since \(s=\varepsilon_s=1\), \(x^*=0\), and \(b+h<T\),
\eqref{eq:accessibility_crossing_probability} gives
\[
\PP_0(\tau_*<T)
\ge\PP_0(X_{b+h}>0,\ Y_{1,b+h}>z_0)>0,
\]
which proves \eqref{eq:accessibility_models} for these two models.

\subsection{Quadratic Volterra Heston+}
\label{app:accessibility_qvh}

For the Quadratic Volterra Heston+ model with \(a_0>0\) and \(1\le p\le2\),
under the assumptions of Corollary~\ref{corollary}\myref{corollary_ii},
we have \(X=Y_1\), so \(X-G_1\) has the semimartingale representation
\eqref{eq:accessibility_pdv_semimartingale}.
For \(t>0\), \(a\in\mathbb R\), and \(\delta>0\), we define the path \(p\) by
\[
\forall s\in[0,t],\qquad p(s):=\frac{s}{t}(a-G_1(t)),
\]
and consider the tube
\begin{align}
&\left\{(u,x)\in[0,t]\times\mathbb R:
|x-G_1(u)-p(u)|<\delta\right\}.
\label{eq:accessibility_qvh_tube}
\end{align}
We set
\[
\tau:=t\wedge\inf\{u\in[0,t]:
|X_u-G_1(u)-p(u)|\ge\delta\}.
\]
On \([0,\tau]\), \(\Gamma_1=\sqrt{\widehat V}\)
is bounded and bounded below by \(\sqrt{a_0}>0\).
On \([0,\tau]\), the drift of \(X-G_1\) is also bounded by
\eqref{eq:memory_pointwise_bound} of
Lemma~\ref{lem:stopped_SDE_shifted_Z}.
If \(\sup_{u\in[0,\tau]}|X_u-G_1(u)-p(u)|<\delta\),
continuity gives \(\tau=t\), and \(|X_t-a|<\delta\) follows from
\(G_1(t)+p(t)=a\).
Lemma~\ref{lem:accessibility_tube}, applied to the tube
\eqref{eq:accessibility_qvh_tube} up to \(\tau\), therefore gives
\begin{gather*}
\forall t>0,\quad\forall a\in\mathbb R,\quad\forall\delta>0,  \quad\PP_0(|X_t-a|<\delta)
\ge\PP_0\!\left(
\sup_{u\in[0,\tau]}|X_u-G_1(u)-p(u)|<\delta
\right)>0.
\end{gather*}
In particular, we have \(\PP_0(X_t>0)>0\) and \(\PP_0(X_t<0)>0\) for every
\(t>0\).
Here \(X_t=Y_{1,t}\) and \(\varepsilon_s=s\), so the full-support
estimate also gives
\[
\forall t>0,\qquad
\PP_0(\varepsilon_s X_t>x^*,\ sY_{1,t}>z_0)
=\PP_0(sX_t>\max\{x^*,z_0\})>0.
\]
Taking \(t=T/2\) gives \(\PP_0(\tau_*<T)>0\), completing
the proof of Lemma~\ref{lem:accessibility_models}.

\bibliographystyle{plainnat}
\bibliography{bibtem}

\end{document}